\let\originalleft\left
\let\originalright\right
\renewcommand{\left}{\mathopen{}\mathclose\bgroup\originalleft}
\renewcommand{\right}{\aftergroup\egroup\originalright}
\newcommand{\N}{\mathbb{N}}
\newcommand{\R}{\mathbb{R}}
\newcommand{\Z}{\mathbb{Z}}
\newcommand{\E}{\mathbb{E}}
\newcommand{\sfH}{\mathsf{H}}
\newcommand{\sfR}{\mathsf{R}}
\newcommand{\sfD}{\mathsf{D}}
\newcommand{\sfT}{\mathsf{T}}
\newcommand{\dd}{\,{\text d}}
\newcommand{\set}[1]{\left\{#1\right\}}
\renewcommand{\P}{\mathbb{P}}
\newcommand{\calH}{\mathcal{H}}
\newtheorem{theorem}{Theorem}[section]
\newtheorem*{theorem*}{Theorem}
\newtheorem{lemma}[theorem]{Lemma}
\newtheorem{proposition}[theorem]{Proposition}
\newtheorem*{proposition*}{Proposition}
\newtheorem{thmx}{Theorem}
\theoremstyle{definition}
\newtheorem{remark}[theorem]{Remark}
\numberwithin{equation}{section}
\begin{document}

\title[Quenched tail estimate for RWRS and RCM II]
{Quenched tail estimate for the random walk in random scenery and in random layered conductance II}
\author{Jean-Dominique Deuschel}
\address[Jean-Dominique Deuschel]
{Institut f\"ur Mathematik, Technische Universit\"at Berlin, Berlin, Germany}
\email{deuschel@math.tu-berlin.de}
\author{Ryoki Fukushima}
\address[Ryoki Fukushima]
{Research Institute for Mathematical Sciences, 
Kyoto University, Kyoto, Japan}
\email{ryoki@kurims.kyoto-u.ac.jp}
\date{\today}
\keywords{random walk, random scenery, spectral dimension, random conductance model, layered media.}
\subjclass[2010]{Primary 60F10; secondary 60J55; 60K37}

\begin{abstract}
This is a continuation of our earlier work~[Stochastic Processes and their Applications, 129(1), pp.102--128, 2019] on the random walk in random scenery and in random layered conductance. We complete the picture of upper deviation of the random walk in random scenery, and also prove a bound on lower deviation probability. Based on these results, we determine asymptotics of the return probability, a certain moderate deviation probability, and the Green function of the random walk in random layered conductance. 
\end{abstract}

\maketitle


\section{Introduction and main results}
This paper is a continuation of our earlier work~\cite{DF18}. In that paper, we studied upper deviation estimates for the random walk in random scenery in the case where the random scenery is non-negative and has a Pareto distribution. 
The results were applied to establish tail estimates for a random conductance model with a layered structure. 

There are two situations left incomplete in~\cite{DF18} for the random walk in random scenery. The first is the regime where the upper deviation probability exhibits a power law decay. Such a regime was proved to exist in $d\ge 3$ but the precise decay rate has not been obtained. The second is the lower deviation estimate. In this paper, we give the precise estimates in the first regime (Theorem~\ref{power-law}), that is needed for the asymptotic of the moderate deviation and the Green function for the random conductance model explained below, and a partial result for the second problem of lower deviation (Proposition~\ref{prop:RWRSlower}). 

As in the previous work, these results have applications to a random conductance model with a layered structure (see Section~\ref{sec:RCM-results} for the precise formulation). In fact, one of our main motivation is to show that in contrast with the standard situation, where the random conductance are independent and identically distributed (cf.~\cite{BD10}), the upper tail of distribution of the conductance in a layered structure yields an anomalous behavior of the heat kernel and the Green function. The first application is to the on-diagonal behavior of the heat kernel. We determine the so-called spectral dimension of the integer lattice weighted by the random conductance, and prove a sharp criterion for the recurrence and transience (Theorem~\ref{thm:on-diag}). The spectral dimension exhibits a non-standard behavior as soon as the first moment of the conductance is infinite. The second application is to a moderate deviation estimate. Similarly to the random walk in random scenery, we found a power law decay regime in~\cite{DF18}, and in this paper the sharp asymptotics is determined (Theorem~\ref{thm:RCM-power}). To the best of our knowledge, this slow decay of moderate deviation probability is a new phenomenon in the context of random walk in random environment. The last application is to derive estimates for the Green function (Theorem~\ref{thm:Green}). In contrast to the spectral dimension, the decay of the Green function exhibits a non-standard behavior even for finite mean conductance when $d\ge 5$. 

\subsection{Results for random walk in random scenery}
\label{sec:RWRS-results}
Let $(\{z(x)\}_{x\in\Z^d},\P)$ be a family of non-negative, independent and identically distributed random variables whose law satisfies
\begin{equation}
 \P(z(x)> r)= r^{-\alpha+o(1)} \textrm{ as }r\to\infty
\label{ass-tail}
\end{equation}
for some $\alpha>0$. The random walk in random scenery is the additive functional of the continuous time simple random walk $((S_t)_{t\ge 0},(P_x)_{x\in\Z^{d}})$ on $\Z^d$ defined as follows:
\begin{equation}
 A(t)=\int_0^t z(S_u)\dd u. 
\end{equation}
For the background and related works, see the introduction of~\cite{GPPdS14,DF18}. Among many results, the asymptotic behaviors of $A(t)$ in our setting are studied in~\cite{KS79,CGPP13}, which say that $A(t)$ scales like 
\begin{equation}
\label{eq:scaling}
t^{s(d,\alpha)+o(1)} \textrm{ with }s(d,\alpha)=
\begin{cases}
{\alpha+1 \over 2\alpha}\vee 1, &d=1,\\
{1 \over \alpha}\vee 1,&d\ge 2
\end{cases}
\end{equation}
under the product measure $\P\otimes P_0$. In fact, finer distributional limit results are established in~\cite{Bor79a,Bor79b,KS79,CGPP13}. 

In this paper, we study quenched tail estimates for $A(t)$, that is, conditionally on $z$. The following upper tail estimates are proved in the previous work~\cite{DF18}:
\begin{thmx}
[Theorem~1 in~\cite{DF18}]\label{RWRS}
Let $\rho>\frac{\alpha+1}{2\alpha}\vee 1$ for $d=1$ and $\rho>\frac{d}{2\alpha}\vee 1$ for $d\ge 2$. Then there exists $p(\alpha,\rho)>0$ such that $\P$-almost surely, 
\begin{equation}
P_0\left(A(t) \ge t^{\rho}\right)
= \exp\left\{-t^{p(\alpha,\rho)+o(1)}\right\}
\label{rwrs}
\end{equation}
as $t\to\infty$. 
Furthermore, when $d=1$ and $\rho< \frac{\alpha+1}{2\alpha}\vee 1$
or $d\ge 2$ and $\rho< \frac{d}{2\alpha}\vee 1$, 
$\P$-almost surely the above probability is bounded from 
below by a negative power of $t$. 
\end{thmx}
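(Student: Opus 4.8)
\emph{Overall structure.}
Since the claim is the two-sided estimate $P_0(A(t)\ge t^\rho)=\exp\{-t^{p+o(1)}\}$, the plan is to prove matching lower and upper bounds, using that the exponent is pinned down only up to $o(1)$, so that polylogarithmic factors and the gap between a value and its dyadic level may be discarded freely. Write $\ell_t(x)=\int_0^t\mathbf 1\{S_u=x\}\dd u$, so that $A(t)=\sum_x z(x)\ell_t(x)$ and $\sum_x\ell_t(x)=t$. Two kinds of input are needed. First, an almost-sure description of the scenery, obtained from~\eqref{ass-tail} by Borel--Cantelli over dyadic radii and levels: $\P$-a.s., for all large $r$ there is $x\in B(0,r)$ with $z(x)\ge r^{d/\alpha-o(1)}$, while at the same time every sub-ball $B\subseteq B(0,r)$ has $\#\set{x\in B:z(x)\ge v}\le(|B|\,v^{-\alpha}\vee1)\,r^{o(1)}$ for every $v\ge1$. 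Second, two random walk estimates: the probability of visiting a prescribed point at distance $R$ by time $t$ is polynomial in $t$ when $R\le\sqrt t$ and of order $\exp\{-cR^2/t\}$ when $\sqrt t\le R\le t$; and the probability of accumulating local time $\lambda$ in a ball of radius $\varrho$ by time $t$ is, for $\varrho^2\le\lambda\le t$, of order $\exp\{-c\lambda/\varrho^2\}$ for $d\ge3$, with the variants $\exp\{-c\lambda/(\varrho^2\log t)\}$ for $d=2$ and $\exp\{-c\lambda^2/t\}$ for $d=1$.

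\emph{Lower bound, and the sub-threshold assertion.}
The walk realises $\{A(t)\ge t^\rho\}$ by dashing to the neighbourhood of the largest scenery value available within distance $R$ and then dwelling there. By the a.s.\ scenery bound there is $x^\ast$ with $|x^\ast|\le R$ and $z(x^\ast)\ge R^{d/\alpha-o(1)}$; the walk visits $x^\ast$ within time $t-\lambda$ (polynomial cost if $R\le\sqrt t$, else $\exp\{-cR^2/t\}$) and then sits there for the remaining time $\lambda$ (cost from the local-time estimate with $\varrho=1$), collecting $A(t)\ge z(x^\ast)\lambda\ge t^\rho$ once $\lambda=t^\rho R^{-d/\alpha+o(1)}$. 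Optimising the exponents $R^2/t$ and $\lambda$ (for $d\ge2$; $R^2/t$ and $\lambda^2/t$ for $d=1$) subject to $\lambda\le t$: for $d\ge2$ one balances $R^2/t\asymp t^\rho R^{-d/\alpha}$, i.e.\ $R=t^{(\rho+1)\alpha/(2\alpha+d)}$, which lies in $[\sqrt t,t]$ exactly when $\tfrac{d}{2\alpha}<\rho\le1+\tfrac d\alpha$, and gives $p(\alpha,\rho)=\tfrac{2\alpha\rho-d}{2\alpha+d}$; for $d=1$ the analogous balance gives $p=\tfrac{2\alpha\rho-\alpha-1}{\alpha+1}$; and for larger $\rho$ one re-optimises with $\lambda$ saturated at $t$, using the ballistic cost $\exp\{-cR\log R\}$ when the optimal $R$ exceeds $t$. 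When instead $\rho$ lies below the relevant threshold ($\tfrac{\alpha+1}{2\alpha}\vee1$ for $d=1$, $\tfrac{d}{2\alpha}\vee1$ for $d\ge2$) the optimal displacement falls below $\sqrt t$, so travel is only polynomially costly, and already with $R=\sqrt t$ the available value $\ge t^{d/(2\alpha)-o(1)}$ exceeds $t^\rho$, so a dwell time $\lambda=O(1)$ suffices: the whole probability is then bounded below by a negative power of $t$, which is the last assertion. (For $\rho<s(d,\alpha)$ the event is typical and the bound follows from~\eqref{eq:scaling}.)

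\emph{Upper bound.}
Let $M=\max_{u\le t}|S_u|$. Peeling $M$ over dyadic values and using that $P_0(M>R)$ is at most $\exp\{-cR^2/t\}$ for $R\le t$ (and $\exp\{-cR\log R\}$ for $R>t$), hence negligible once $R$ passes the optimal displacement scale, it suffices to bound $P_0(A(t)\ge t^\rho,\ M\le R)$ over the $O(\log t)$ dyadic scales $R$ up to $t^{O(1)}$, with an extra factor $\exp\{-cR^2/t\}$ available on $\{R/2<M\le R\}$. On $\{M\le R\}$ decompose the scenery into dyadic level sets $U_j=\set{x\in B(0,R):z(x)\in[2^j,2^{j+1})}$; since only $O(\log t)$ levels matter, on $\{A(t)\ge t^\rho\}$ some level carries local time $L_t(U_j):=\sum_{x\in U_j}\ell_t(x)\ge2^{-j}t^{\rho-o(1)}=:\lambda_j$, where the a.s.\ scenery bound gives $|U_j\cap B|\le|B|\,2^{-j\alpha}r^{o(1)}$ for every sub-ball $B$, and the relevant $j$ satisfy $t^{\rho-1}\le2^j\le R^{d/\alpha}$ up to $t^{o(1)}$ (lower bound from $L_t(U_j)\le t$, upper from the absence of larger values in $B(0,R)$). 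The crux is then a large-deviation bound $P_0(L_t(U)\ge\lambda,\ M\le R)\le\exp\{-c\,[\text{travel to a densest cluster of }U]-c\lambda'/\varrho^2\}$, uniform over all $U\subseteq B(0,R)$ with that density, proved by coarse-graining the path into time blocks and noting that, to pile up local time $\lambda$ on $U$, the walk must confine itself for a total time $\lambda'\ge\lambda\,|B(y,\varrho)|/|U\cap B(y,\varrho)|$ to some ball $B(y,\varrho)$ in which $U$ is densest; for $\alpha\ge1$ the feasibility condition $\lambda'\le t$ then already forces the walk onto essentially a single enormous value, and in all cases the confinement probability is controlled by a Feynman--Kac bound $E_0[\exp(\theta\int_0^t\mathbf 1_U(S_u)\dd u)]\le e^{t\lambda_1(\theta\mathbf 1_U)}$ with the top eigenvalue $\lambda_1(\theta\mathbf 1_U)$ estimated through the Borel--Cantelli density of $U$. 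Inserting $\lambda=\lambda_j$ and the density bound, optimising over $R$, $j$ and the cluster radius, and checking the outcome against the lower-bound strategy (the single far value being optimal for $d\ge2$ and $\rho$ just above the threshold) recovers the same $p(\alpha,\rho)$.

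\emph{Main obstacle.}
The delicate part is the uniform local-time large deviation $P_0(L_t(U)\ge\lambda)$ together with the ensuing variational problem: one must (i) use Borel--Cantelli to preclude the randomly placed level sets of $z$ from ever being clustered densely enough to make accumulating local time cheap, (ii) correctly balance the competing costs of travelling far (to reach a large value) against confining tightly (to build up local time), under the hard time budget $t$, and (iii) check that the resulting optimum matches the explicit lower-bound strategy, across the dichotomy $\alpha<1$ versus $\alpha\ge1$ and the boundary dimensions $d=1$ (recurrent, local time not tight, confinement cost $\propto\lambda^2/t$) and $d=2$ (critical, logarithmic correction). I expect steps (ii)--(iii) to be the technical heart.
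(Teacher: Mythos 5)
This statement is quoted verbatim from~\cite{DF18} (it is Theorem~1 there) and the present paper does not re-prove it, so there is no in-paper proof to compare against; I can only compare against what the citations to~\cite{DF18} reveal about that proof, together with general soundness. Your lower-bound strategy (dash to the largest scenery value within distance $R$ and dwell there, then balance travel cost $e^{-cR^2/t}$ against the required dwell time $\lambda=t^\rho R^{-d/\alpha+o(1)}$) is correct and yields the right exponent; for $d\ge2$ the balance $R=t^{(\rho+1)\alpha/(2\alpha+d)}$ gives $p=(2\alpha\rho-d)/(2\alpha+d)$, which matches the exponent of Theorem~\ref{LDP} at $\rho=1$, and the $d=1$ version is also consistent. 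The sub-threshold assertion falls out of the same strategy with $R=\sqrt{t}$ and unit dwell time, as you say.

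The genuine gap is in the upper bound. Your plan --- peel the range $M$, split the scenery into $O(\log t)$ dyadic level sets $U_j$, argue that some $U_j$ must carry local time $\lambda_j=2^{-j}t^{\rho-o(1)}$, and then estimate $P_0(\ell_t(U)\ge\lambda)$ --- is structurally the same decomposition used in~\cite{DF18} (see the references to~\cite[Lemma~2, eqs.~(49), (74)]{DF18} in the proof of Lemma~\ref{lem:low}). But the pivotal local-time estimate is where your sketch breaks down. The claim that ``to pile up local time $\lambda$ on $U$, the walk must confine itself for a total time $\lambda'\ge\lambda\,|B(y,\varrho)|/|U\cap B(y,\varrho)|$ to some ball $B(y,\varrho)$ in which $U$ is densest'' is simply false: the walk can, and typically does, distribute its local time on a sparse set $U$ over many widely separated clusters, so no single ball needs to carry the full occupation. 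You do invoke the Feynman--Kac bound $E_0[\exp(\theta L_t(U))]\le e^{t\lambda_1(\theta\mathbf 1_U)}$ (restricted to $\{M\le R\}$), which is a legitimate replacement, but turning it into the required bound needs a serious spectral estimate for $\lambda_1$ of a random sparse set with only a Borel--Cantelli density input --- something on the scale of the enlargement-of-obstacles machinery used in Section~\ref{sec:upper} of this paper, or the moment-of-local-time estimate that~\cite[Lemma~2]{DF18} delivers via first/higher moments of $\ell_t(U)$ through the Green function. As written, you name the right tool but do not prove the inequality that does the work, and the optimization over $R$, $j$ and the cluster radius is therefore ungrounded. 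This is the step I would insist on seeing before accepting the argument.

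A smaller remark: the approach in~\cite{DF18} appears to proceed through Kac-type moment bounds for $\ell_t(U_j)$ on the sparse level sets rather than through a Dirichlet eigenvalue. Both routes can work. The moment route is more elementary and automatically accounts for the walk hopping between distant clusters of $U$; the spectral route, if carried out carefully, can be sharper. But the spectral estimate you would need for sparse random $U$ is itself a substantial piece of work, not a one-liner.
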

\begin{thmx}
 \label{LDP}
Let $d=1$ and $\alpha>1$ or $d\ge 2$ and $\alpha>\frac{d}{2}$. 
Then for any $c>\E[z(0)]$, $\P$-almost surely, 
\begin{equation}
P_0\left(A(t) \ge ct\right)
= 
\begin{cases}
\exp\left\{-t^{\frac{\alpha-1}{\alpha+1}+o(1)}\right\},&d=1,\\[5pt]
\exp\left\{-t^{\frac{2\alpha-d}{2\alpha+d}+o(1)}\right\},&d\ge 2 
\end{cases}
\label{ldp}
\end{equation}
as $t\to\infty$. 
\end{thmx}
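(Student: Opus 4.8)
The plan is to prove matching lower and upper bounds, both holding $\P$-almost surely, on the exponent $\theta$ in $P_0(A(t)\ge ct)=\exp\{-t^{\theta+o(1)}\}$, where $\theta=\tfrac{\alpha-1}{\alpha+1}$ for $d=1$ and $\theta=\tfrac{2\alpha-d}{2\alpha+d}$ for $d\ge2$ (note that $\E[z(0)]<\infty$ under the hypotheses, so $c>\E[z(0)]$ makes the event a genuine deviation). Both bounds are governed by the same mechanism, which I will first isolate. Writing $A(t)=\sum_x z(x)\ell_t(x)$ with $\ell_t(x)=\int_0^t\mathbf 1\{S_u=x\}\dd u$, the walk should travel to a site $x_0$ carrying an unusually large value $z(x_0)\approx v$, located at distance roughly $r\approx v^{\alpha/d}$ — the typical distance at which a value of size $v$ first appears — and then accumulate there a local time $\approx ct/v$, which already forces $A(t)\ge z(x_0)\ell_t(x_0)\ge ct$. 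The cost is $\exp\{-\Theta(r^2/t)\}$ for the excursion (here $r\gg\sqrt t$) times the cost of the local-time accumulation, which is $\exp\{-\Theta(t/v)\}$ when $d\ge2$ (order $t/v$ returns to $x_0$) and $\exp\{-\Theta(t/v^2)\}$ when $d=1$ (confining the walk to an interval of half-length $\asymp v$). Balancing the two factors and using $r\approx v^{\alpha/d}$ gives $v\approx t^{2d/(2\alpha+d)}$, $r\approx t^{2\alpha/(2\alpha+d)}$ for $d\ge2$ and $v\approx t^{1/(\alpha+1)}$, $r\approx t^{\alpha/(\alpha+1)}$ for $d=1$, and in both cases the common exponent $t^\theta$.

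For the lower bound I would first record the quenched environment fact that, $\P$-a.s., $\max_{|x|\le R}z(x)=R^{d/\alpha+o(1)}$ as $R\to\infty$; this is a routine Borel--Cantelli argument along a geometric sequence of radii, using the independence of the $z(x)$ and the two-sided bound in \eqref{ass-tail}. Applying it with $R=r(t)$ equal to the scale $r$ above produces a site $x_0\in B(0,r(t))$ with $z(x_0)\ge r(t)^{d/\alpha-o(1)}$. Since $r(t)\gg\sqrt t$ (by $2\alpha>d$, resp. $\alpha>1$), the local central limit estimate gives $P_0(S_{t/2}=x_0)\ge\exp\{-\Theta(r(t)^2/t)\}$ up to polynomial factors, and by the Markov property at time $t/2$ it suffices to bound $P_{x_0}(\ell_{t/2}(x_0)\ge ct/z(x_0))$ from below. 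When $d\ge3$ one forces $\Theta(ct/z(x_0))$ successive returns of the walk to $x_0$, each occurring with a probability bounded away from $0$, at cost $\exp\{-\Theta(ct/z(x_0))\}$; when $d=2$ the same works (with a harmless extra logarithm); when $d=1$ one instead confines the walk to $B(x_0,\lceil z(x_0)/c\rceil)$ during $[0,t/2]$, at cost $\exp\{-\Theta(t/z(x_0)^2)\}$, which produces local time $\asymp t/z(x_0)$ at $x_0$. Since $ct/z(x_0)=t^{\theta+o(1)}=o(t)$ this fits comfortably in the time budget, and on the resulting event $A(t)\ge ct$. Inserting $z(x_0)\ge r(t)^{d/\alpha-o(1)}$ and the value of $r(t)$ makes every exponent of order $t^\theta$.

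For the upper bound I would argue that no behaviour of the walk beats this. Up to probability $\exp\{-\Theta(t)\}$ the walk stays in $B(0,t^2)$, and $\P$-a.s. $\max_{|x|\le t^2}z(x)\le t^{2d/\alpha+o(1)}$; a standard truncation — replacing $z$ by $z\wedge M$ with $M$ a small power of $t$ changes $A(t)$ by an amount that exceeds any fixed fraction of $t$ only with probability $\exp\{-t^{1-o(1)}\}$, hence negligibly — reduces the problem to controlling $\sum_{x:z(x)>M}z(x)\ell_t(x)$. Grouping those sites into dyadic value-blocks $H_j=\{x:2^j\le z(x)<2^{j+1}\}$ (all of which are now sparse) and pigeonholing over the $O(\log t)$ relevant $j$'s, then coarse-graining the range of the walk into dyadic scales $R$, one is reduced to bounding, for each pair $(j,R)$, the probability that the walk stays in $B(0,R)$, reaches distance $>R/2$, and has $\ell_t(H_j\cap B(0,R))\ge L_j:=\Theta(t/(2^j\log t))$. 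The strong Markov property at the first hitting time of $\partial B(0,R/2)$ factorizes this (up to lower-order terms absorbed at smaller scales) into an excursion factor $\exp\{-\Theta(R^2/t)\}$, valid because $R>\sqrt t$, and a localization factor
\begin{equation*}
\sup_{y}P_y\bigl(\ell_t(H_j\cap B(0,R))\ge L\bigr)\le
\begin{cases}\exp\{-cL/(\log t)^{O(1)}\},& d\ge2,\\[2pt]\exp\{-cL^2/t\},& d=1.\end{cases}
\end{equation*}
This uniform bound is the analytic heart of the argument: it exploits the sparsity of $H_j$ (lattice spacing $\asymp 2^{j\alpha/d}$) through Green-function moment estimates of the form $\E_y[\ell_\infty(H_j\cap B(0,R))^k]\le k!\,\Gamma^{\,k}$ with $\Gamma\lesssim 1+2^{-j\alpha}R^2$, so that the occupation time of a sparse set has (stretched-)exponential tails. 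Since $H_j\cap B(0,R)\ne\emptyset$ forces $R\gtrsim 2^{j\alpha/d}$ $\P$-a.s., summing $\exp\{-\Theta(R^2/t)-\Theta(L_j/(\log t)^{O(1)})\}$ (resp.\ with $L_j^2/t$) over the $O(\log^2 t)$ pairs $(j,R)$ and optimizing — the extremal pair has $2^j\approx t^{2d/(2\alpha+d)}$ and $R\approx 2^{j\alpha/d}$ — gives $P_0(A(t)\ge ct)\le\exp\{-t^{\theta-o(1)}\}$.

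The main obstacle is precisely the uniform localization estimate displayed above, together with the bookkeeping that makes the excursion cost and the localization cost genuinely add in the exponent: one must control starting points $y$ lying inside or near $H_j$, the dependence of $\Gamma$ on $R$ (harmless because a large $R$ is already penalized by $\exp\{-\Theta(R^2/t)\}$), the recursion over the scales $R$, and the logarithmic corrections specific to $d=2$. All logarithmic and polynomial losses are absorbed into the $o(1)$'s, so matching the lower and upper bounds completes the proof. These estimates are of the same nature as, and in part can be quoted from, the machinery behind Theorem~\ref{RWRS}.
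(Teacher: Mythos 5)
Theorem~\ref{LDP} is not proved in this paper: both Theorem~\ref{RWRS} and Theorem~\ref{LDP} are recalled from the companion work~\cite{DF18} (see the sentence preceding them, ``The following upper tail estimates are proved in the previous work''), so there is no in-text proof against which to compare your argument, and I can only assess internal coherence. Your heuristic is right and reproduces the exponents: visiting a site of value $v$ at distance $r\approx v^{\alpha/d}$ costs $\exp\{-\Theta(r^2/t)\}$ (indeed $r\gg\sqrt t$ under $\alpha>1$ resp.\ $\alpha>d/2$), the local-time accumulation costs $\exp\{-\Theta(t/v)\}$ for $d\ge2$ and $\exp\{-\Theta(t/v^2)\}$ for $d=1$, and balancing yields $t^{(2\alpha-d)/(2\alpha+d)}$ and $t^{(\alpha-1)/(\alpha+1)}$. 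The lower-bound sketch (Borel--Cantelli for $\max_{|x|\le R}z(x)$, local CLT to reach the large site, confinement or repeated-return estimate to pile up local time) is standard and I see no gap. The dyadic level-set slicing in your upper bound is also the architecture this paper uses for the related Theorem~\ref{power-law} (cf.~\eqref{eq:level-decomp},~\eqref{slicing2}), so the overall route is consistent with the authors' methods.

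The genuine gap is in the upper bound, exactly where you say the hard work lies. Your uniform localization estimate $\sup_y P_y\bigl(\ell_t(H_j\cap B(0,R))\ge L\bigr)\le\exp\{-cL/(\log t)^{O(1)}\}$ for $d\ge2$ and $\exp\{-cL^2/t\}$ for $d=1$ is asserted, not proved, and as written it cannot hold uniformly over $y,j,R$: in $d=1$ the correct bound must see the spacing $s\asymp 2^{j\alpha}$ and the window $R$ (when $H_j$ has many points in $[-\sqrt t,\sqrt t]$, the typical value of $\ell_t(H_j)$ is already $\asymp t/s$, so $\exp\{-cL^2/t\}$ fails for moderate $L$), and in $d=2$ the Green-function constant $\Gamma$ carries a $\log$ factor that must be tracked together with the restriction $R\lesssim\sqrt t$ under which $\int_0^t p_u\dd u\asymp\log(\sqrt t/|\cdot|)$; the Khasminskii moment bound you invoke is only routine for $d\ge3$. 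The strong-Markov factorization that ``makes the excursion cost and the localization cost genuinely add'' is also nontrivial, since the walk may accumulate $\ell_t(H_j)$ both before and after reaching scale $R$, so a recursion over dyadic annuli is needed, not a single split. Finally, the truncation step silently uses a moderate-deviation estimate for the bounded-scenery functional (the analogue of eq.~(76) in~\cite{DF18} invoked here at~\eqref{eq:boundedRWRS}); that dependence should be made explicit. None of these is likely fatal, but as it stands the upper bound is a programme rather than a proof.
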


In this paper, we provide sharp estimates in the power law decay regime of Theorem~\ref{RWRS} and an estimate for a lower deviation probability.

Let us start with the upper deviation in the case $d\ge 3$.
The reason why we have a power law decay in the regime $d\ge 3$ and $\rho<\frac{d}{2\alpha}\vee 1$ can be explained as follows: The random walk can reach any point inside $[-t^{1/2},t^{1/2}]^d$ with probability $ct^{-d/2}$. Since the highest value of $z$-field in this box is $t^{\frac{d}{2\alpha}+o(1)}$, the deviation up to this value can be achieved with probability at least $ct^{-d/2}$.
The first result in this paper provides a sharp estimate in this regime. 
\begin{theorem}
\label{power-law}
Let $d \ge 3$ and $\alpha<d/2$. Then for any $\rho\in(\tfrac{1}{\alpha}\vee 1, \tfrac{d}{2\alpha})$, 
\begin{equation}
 P_0(A(t) \ge t^\rho)=t^{-\alpha\rho+1+o(1)} \textrm{ as }t\to\infty
\label{eq:unpinned}
\end{equation}
and
\begin{equation}
 P_0(A(t) \ge t^\rho,S_t=0)=t^{-\alpha\rho+1-\frac{d}{2}+o(1)} \textrm{ as }t\to\infty.\label{eq:lbd_pinned}
\end{equation}
If $\alpha>1$ in addition, then the same bounds hold with $t^\rho$ replaced by $ct$ for any $c>\E[z(0)]$.
\end{theorem}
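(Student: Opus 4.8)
The plan is to establish matching upper and lower bounds for each of the two probabilities, using the heuristic already sketched in the text as the backbone of the lower bound.

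\textbf{Lower bounds.} For \eqref{eq:unpinned}, I would fix a scale $L=t^{1/2}$ and argue that, $\P$-a.s., the box $B=[-L,L]^d\cap\Z^d$ contains a site $x_0$ with $z(x_0)\ge L^{d/\alpha-\varepsilon}=t^{\frac{d}{2\alpha}-\varepsilon'}$ (a Borel--Cantelli / extreme-value statement that follows from \eqref{ass-tail} exactly as in \cite{DF18}). Since $\rho<\frac d{2\alpha}$, we have $t^\rho\le z(x_0)\cdot t^{-\delta}$ for small $\delta>0$ and all large $t$. The walk started at $0$ reaches $x_0$ by time $t/2$ with probability at least $c\,t^{-d/2+o(1)}$ (local central limit theorem for the continuous-time SRW), and then, by a standard sojourn estimate, with probability bounded below by a constant it spends time at least $t^{\rho}/z(x_0)=t^{o(1)}$ at $x_0$ during $[t/2,t]$; this already makes $A(t)\ge t^\rho$. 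To land the walk back at $0$ for \eqref{eq:lbd_pinned}, I would instead reach $x_0$ by time $t/3$, accumulate the scenery during $[t/3,2t/3]$, and return to $0$ during $[2t/3,t]$, each of the two travel events costing $t^{-d/2+o(1)}$, which gives the extra factor $t^{-d/2+o(1)}$. A subtlety: one must ensure the $o(1)$ loss from $z(x_0)$ being only $t^{\frac d{2\alpha}-\varepsilon'}$ rather than exactly $t^{\frac d{2\alpha}}$ is absorbed; this is fine because $\rho$ is strictly below $\frac d{2\alpha}$, so we have room to spare.

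\textbf{Upper bounds.} This is the harder direction. The natural strategy is a union bound over the ``responsible'' high site. Write $R=t^{C}$ for a large constant $C$ (using Theorem~\ref{RWRS}, or rather a crude a priori bound, to confine the walk to $[-R,R]^d$ up to a probability that is negligible on the relevant scale). Decompose $A(t)=\sum_{x}z(x)\,\ell_t(x)$ where $\ell_t(x)=\int_0^t \mathbf 1\{S_u=x\}\dd u$ is the local time. On the event $\{A(t)\ge t^\rho\}$ there must be a site $x$ with $z(x)\,\ell_t(x)\ge t^{\rho-o(1)}$, and since $\ell_t(x)\le t$ this forces $z(x)\ge t^{\rho-1-o(1)}$. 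For each dyadic level $z(x)\asymp t^{a}$ with $a\ge\rho-1$, \eqref{ass-tail} gives roughly $R^d\,t^{-\alpha a}$ such sites in the box; for the walk to collect $t^\rho$ worth of scenery from such a site it must spend local time at least $t^{\rho-a-o(1)}$ there, which by the standard local-time large-deviation bound $P_0(\ell_t(x)\ge s)\le \exp(-c\,s\cdot\mathrm{dist}\text{-penalty})\le \big(P_0(\text{ever hit }x)\big)\cdot\exp(-c s)$ — more precisely $P_0(\ell_t(x)\ge s)\le C t^{-d/2+o(1)}e^{-cs}$ when $s\ge 1$, and is at most $Ct^{-d/2+o(1)}$ for $s\le1$ — costs $t^{-d/2+o(1)}$ when $\rho-a\le 0$, i.e.\ $a\ge\rho$. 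Multiplying the count $t^{-\alpha a+o(1)}$ (absorbing $R^d=t^{o(1)}$ after taking $C$ into the exponent correctly — here one should take $R=t^{1/2+o(1)}$, justified by confining the walk to this scale up to probability $t^{-\alpha\rho+1-\text{(large)}}$ via an iteration of Theorem~\ref{RWRS}-type estimates) by the hitting cost and summing the geometric series over $a\in[\rho,\frac d{2\alpha}]$, the dominant term is $a=\rho$, giving $t^{-\alpha\rho+o(1)}$ for the pinned-free probability and $t^{-\alpha\rho+1+o(1)}$ once the $t^{o(1)}$ from the box size is replaced by the correct $t^{1+o(1)}$ coming from $|B|=t^{d/2+o(1)}$ balanced against the hitting probability $t^{-d/2+o(1)}$: the product $|B|\cdot t^{-\alpha a}\cdot t^{-d/2+o(1)}$ with $a=\rho$ yields $t^{d/2-\alpha\rho-d/2+o(1)}=t^{-\alpha\rho+o(1)}$ — so in fact the gain of $t^{1}$ in \eqref{eq:unpinned} must come from allowing $\ell_t(x)$ up to order $t$ with $a$ down to $\rho-1$, where the hitting-plus-sojourn cost is only a constant (no $t^{-d/2}$) but concentration of local time near its mean over a box of $t^{d/2}$ sites must be handled. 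The cleanest route is: on $\{A(t)\ge t^\rho\}$ either some single site carries $\ge t^{\rho-o(1)}$, handled as above, contributing $t^{-\alpha\rho+1+o(1)}$; the residual contribution of ``moderately high'' sites summed against the typical (Gaussian-scale) local time profile is shown to be $\le t^{\rho-o(1)}$ with overwhelming probability by a first-moment / Markov bound on $\sum_{x\in B} z(x)\ell_t(x)$, using $\E[z\wedge t^{a}]\asymp t^{a(1-\alpha)}$ for $a>0$. The main obstacle, and the place where the $d\ge3$, $\alpha<d/2$ hypotheses enter decisively, is precisely this bookkeeping: showing that no combination of many medium-height sites can conspire to reach $t^\rho$ more cheaply than a single near-extreme site, which amounts to a careful moment computation together with the transience-type bound $\sum_x P_0(\text{hit }x)^2<\infty$ when $d\ge3$.

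\textbf{Linear case.} For the final sentence, when $\alpha>1$ we have $\E[z(0)]<\infty$ and $\frac1\alpha\vee1=1<\rho$ can be pushed to $\rho=1$ with $t^\rho$ replaced by $ct$, $c>\E[z(0)]$: the lower bound is identical (a high site of value $t^{\frac d{2\alpha}-\varepsilon'}\gg t$ still does the job since $\frac d{2\alpha}>1$ here as $\alpha<d/2$), and the upper bound follows by the same union bound with $\rho=1$, the point being that the ``typical'' contribution $\sum_x z(x)\ell_t(x)$ has mean $\sim \E[z(0)]\,t<ct$, so by the first-moment argument the deviation again must be caused by a single site of height $\ge t^{1-1-o(1)}=t^{o(1)}$... which requires the more refined truncation $z(x)\le ct/\ell_t(x)$; here Theorem~\ref{LDP}'s hypothesis $\alpha>d/2$ is \emph{not} assumed, so one genuinely needs the power-law regime analysis, and the exponent works out to $-\alpha\cdot1+1=-\alpha+1$ as claimed, with the extra $-d/2$ in the pinned case from the return constraint. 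I expect this linear case to reduce, after truncation at level $ct/\ell$, to exactly the $\rho$-case estimates evaluated at $\rho=1$.
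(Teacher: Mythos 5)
Your lower bound strategy does not give the claimed exponent, and this is the crucial gap. You aim the walk at a single near-extremal site $x_0$ with $z(x_0)\approx t^{d/(2\alpha)}$ inside $[-t^{1/2},t^{1/2}]^d$. First, the hitting probability you quote, $t^{-d/2+o(1)}$, is not right: for $d\ge 3$ the probability of hitting a single site at distance $\approx t^{1/2}$ before time $t/2$ is of order $|x_0|^{2-d}\asymp t^{1-d/2}$. But even after that correction, the strategy only yields the lower bound $t^{1-d/2+o(1)}$ for the unpinned probability, which is \emph{strictly smaller} than the target $t^{-\alpha\rho+1+o(1)}$ whenever $\rho<\tfrac{d}{2\alpha}$ (since $\alpha\rho<\tfrac d2$ gives $-\alpha\rho+1>1-\tfrac d2$). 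The correct strategy, which the introduction of the paper already flags, is to aim at the \emph{intermediate} level set $\{z\ge t^\rho\}$, which has $\approx t^{d/2-\alpha\rho}$ sites in the box; the hitting probability to this set is $\asymp\sum_x |x|^{2-d}$ over the level set, and this sum is of order $t^{1-\alpha\rho}$, dominated by the $\approx t^{d/2-\alpha\rho}$ sites at distance $\approx t^{1/2}$. A single extremal site simply cannot produce this. The paper implements this with a second-moment (Paley--Zygmund) argument: it computes the first moment of the occupation time of $\underline{\mathcal H}_\rho=\{z\ge t^\rho\}\cap B(0,t^{1/2})\setminus B(0,t^{\alpha\rho/d+\epsilon})$ via the Green-function estimate $\int_0^{t/2}p_u(0,x)\dd u\asymp|x|^{2-d}$ together with a Borel--Cantelli control on $|\{z\ge t^\rho\}|$ in annuli, bounds the second moment using an estimate from \cite{DF18}, concludes that the hitting probability is $\ge t^{-\alpha\rho+1-C\epsilon}$, and then lets the walk sit for a unit time and return to the origin. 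You should replace your single-site scheme with this level-set hitting estimate.

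Your upper bound outline has the right flavour but does not close. Your own writeup records the difficulty: once you have hit a site of height $\approx t^a$ and paid a hitting cost, the bookkeeping over the pair (height level, local time) does not obviously produce the exponent $-\alpha\rho+1$, and you defer the key point (``no combination of medium sites can conspire'') to a ``careful moment computation'' you do not carry out. The paper instead makes the following clean split. Let $\mathcal H_{\rho-5\epsilon}$ be the level set $\{z\ge t^{\rho-5\epsilon}\}$ restricted to $[-t^{1/2+\epsilon},t^{1/2+\epsilon}]^d$. On the one hand, $P_0(A(t)\ge t^\rho,\ \sfH_{\mathcal H_{\rho-5\epsilon}}\ge t)$ is stretched-exponentially small: the random walk must then accumulate $t^\rho$ from sites all of height $<t^{\rho-5\epsilon}$, and a level-by-level decomposition of the occupation time together with estimates imported from \cite{DF18} rules this out (Lemma~\ref{lem:low}). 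On the other hand, $P_0(\sfH_{\mathcal H_{\rho-5\epsilon}}<t)$ is bounded by $t^{-\alpha\rho+1+O(\epsilon)+o(1)}$ via an annealed computation using the linear growth of $E_0[|S_{[0,2t]}|]$, Jensen, a Markov inequality in $\P$, and Borel--Cantelli along the dyadic sequence $t=2^n$ (Lemma~\ref{lem:highest}). This is precisely the statement that ``collecting $t^\rho$ from medium sites is impossible'' which your sketch leaves open. The pinned estimate~\eqref{eq:lbd_pinned} then follows by time reversal at $t/2$ and Lemma~\ref{lem:RWHK}. For the linear case $\alpha>1$, the paper's Lemma~\ref{lem:low2} is the analogue of Lemma~\ref{lem:low} with a finite-mean truncation, consistent with what you anticipated.

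In short: the upper bound needs a real lemma (not a moment-heuristic) quantifying that the low level sets cannot contribute $t^\rho$; the lower bound needs to target the intermediate level set $\{z\ge t^\rho\}$ via a second-moment argument rather than a single extremal site, and the latter is the part of your proposal that is actually incorrect.
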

In the proof, we will see that these asymptotics are coming from a simple specific strategy: the random walk visits the intermediate level set $\{x\in\Z^d\colon z(x)\ge t^\rho\}$ and spends a unit time there. The right-hand side of~\eqref{eq:unpinned} turns out to be the probability for the random walk to visit that level set before time $t$.

In the remaining case where $d=1,2$ and the conditions in the last part of Theorem~\ref{RWRS} are satisfied, we expect from~\eqref{eq:scaling} that the left-hand side of~\eqref{rwrs} tends to one as $t\to\infty$. This is confirmed by the following lower tail estimate. It shows that, in contrast to the upper deviation, any small lower deviation causes a stretched exponential decay in all dimensions. 
\begin{proposition}
\label{prop:RWRSlower}
Let $d \ge 1$. For any $\epsilon>0$, there exists $c(\epsilon)>0$ such that $\P$-almost surely, 
\begin{equation}
 P_0\left(A(t)\le t^{s(d,\alpha)-\epsilon}\right)
\le \exp\{-t^{c(\epsilon)}\}
 \label{lower-deviation}
\end{equation}
for all sufficiently large $t$. Furthermore, if $\E[z(0)]<\infty$, then the same bound holds with $t^{s(d,\alpha)-\epsilon}$ replaced by $t(\E[z(0)]-\epsilon)$. 
\end{proposition}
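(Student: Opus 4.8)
The plan is to show that ``$A(t)$ is small'' forces the random walk to keep its occupation measure concentrated in a region much smaller than its natural diffusive window $[-\sqrt t,\sqrt t]^d$ throughout the whole interval $[0,t]$, an event of stretched-exponential probability; the quenched nature of the statement will be used only through a Borel--Cantelli argument on the geometry of the scenery inside the box where the walk lives. First I would restrict to the event $\{\sup_{u\le t}|S_u|\le t^2\}$, whose complementary $P_0$-probability is $\le e^{-t}$ for large $t$; write $B_t:=[-t^2,t^2]^d\cap\Z^d$ and $\ell_t(x):=\int_0^t\mathbf 1\{S_u=x\}\dd u$, so that $A(t)=\sum_x z(x)\ell_t(x)$ and $\sum_x\ell_t(x)=t$.

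The scenery input I need is a lower bound, valid $\P$-almost surely for all large $t$, of the form: every box $V\subseteq B_t$ of side $\ell\ge r(t)$ has $|V|^{-1}\sum_{x\in V}z(x)$ above the relevant threshold. In the finite-mean case ($\E[z(0)]<\infty$; this is the second assertion, and for large $t$ it also implies the first since $t^{1-\epsilon}\le(\E[z(0)]-\epsilon)t$ eventually) the threshold is $\E[z(0)]-\epsilon/2$, and since the $z(x)$ are nonnegative the empirical mean over a box has exponentially small lower tail (Cram\'er: $\P(|V|^{-1}\sum_{x\in V}z(x)\le\E[z(0)]-\epsilon/2)\le e^{-c|V|}$ for some $c=c(\epsilon)>0$), so a union bound over the $t^{O(1)}$ boxes in $B_t$ and over $t\in\N$ gives this with $r(t)=(C\log t)^{1/d}$. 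In the infinite-mean case ($\alpha\le1$, so $s:=s(d,\alpha)>1$) one instead unions over the positions of the large order statistics of $\{z(x)\}_{x\in B_t}$ to see that every box of side $\ell\ge r(t)$ has $|V|^{-1}\sum_{x\in V}z(x)>2t^{s-1-\epsilon}$, now with $r(t)=t^{1/d-\epsilon''}(\log t)^{1/d}$ for $d\ge2$ and $r(t)=t^{1/2-\epsilon'}(\log t)$ for $d=1$, where $\epsilon',\epsilon''>0$ depend on $\epsilon$ and $\alpha$.

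Granting this, the main step is to convert $\{A(t)\le\delta t\}$ (with $\delta=\E[z(0)]-\epsilon$, resp.\ $\delta=t^{s-1-\epsilon}$) into an effective confinement: I would show that on this event the walk spends at least $t/2$ of its time in some single box $Q\subseteq B_t$ of side $R(t):=C\,r(t)$ --- for otherwise the occupation measure is spread out enough that the scenery bound above forces $A(t)=\sum_x z(x)\ell_t(x)>\delta t$. Given the confinement, a standard estimate --- the principal Dirichlet eigenvalue of a box of side $R(t)$ is of order $R(t)^{-2}$, equivalently the Donsker--Varadhan bound for occupation times --- yields $P_0(\ell_t(Q)\ge t/2)\le e^{-ct/R(t)^2}$ for each fixed $Q$. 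Summing over the $t^{O(1)}$ candidate boxes and adding the $e^{-t}$ from leaving $B_t$ gives
$$P_0(A(t)\le\delta t)\le t^{O(1)}e^{-ct/R(t)^2}+e^{-t}\le e^{-t^{c(\epsilon)}}$$
for all large $t$, with $c(\epsilon)>0$ because $R(t)^2=(\log t)^{2/d+o(1)}$ in the finite-mean case (so that $c(\epsilon)$ may be taken arbitrarily close to $1$ there), while $R(t)^2=t^{2/d-2\epsilon''+o(1)}\le t^{1-2\epsilon''+o(1)}$ for $d\ge2$ (resp.\ $R(t)^2=t^{1-2\epsilon'+o(1)}$ for $d=1$) in the infinite-mean case.

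The hard part will be the confinement reduction. The walk is not literally trapped, so one must rule out that it keeps $\sum_x z(x)\ell_t(x)$ small while its occupation measure is spread out but deliberately biased onto the scattered low-scenery sites; quantifying this means playing the local-time profile against the scenery lower bounds, and the argument is genuinely regime-dependent, which is where the dependence of $R(t)$ and $c(\epsilon)$ on the dimension and on whether $\E[z(0)]$ is finite originates. When the target threshold is $o(t)$ one can shortcut this via a level-set reduction to ``the walk spends a majority of its time on an i.i.d.\ set of vanishing density,'' whose clusters in $B_t$ all have diameter $\le R(t)$ by a subcritical-percolation estimate; for the threshold $(\E[z(0)]-\epsilon)t$ this shortcut is unavailable, and the cleanest route is to truncate ($g:=z\wedge M$, $M$ a large constant), pass to the exponential moment $P_0(A(t)\le\delta t)\le e^{\lambda\delta t}\,\E_0[\exp\{-\lambda\int_0^t g(S_u)\dd u\}]$, and control the latter by cutting $[0,t]$ into blocks of length of order $R(t)^2$ and applying the scenery estimate block by block --- taking care, since a single spectral bound over all of $B_t$ is too weak ($B_t$ inevitably contains small favorable pockets, so the cost of travelling to them must be retained), which is handled by a Sznitman-type enlargement-of-obstacles argument.
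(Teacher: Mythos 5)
The central reduction --- that $\{A(t)\le\delta t\}$ together with the scenery bound ``every box of side $\ge r(t)$ has average scenery $>2\delta$'' forces the walk to spend at least $t/2$ of its time in a single box of side $R(t)=Cr(t)$ --- is not proved, and as stated it is false. Spreading the occupation measure over many boxes does not make $A(t)$ large, because inside every box the walk is free to concentrate $\ell_t$ on the low-scenery sites; the box-averaged scenery controls $\sum_{x\in V}z(x)$, not $\sum_{x}z(x)\ell_t(x)$ for an adversarially chosen $\ell_t$. You acknowledge precisely this in the last paragraph (``deliberately biased onto the scattered low-scenery sites''), but the proposed fixes do not close it. In the regime $\alpha<1$, $d\ge2$ the relevant high level set $\calH_*=\{z\ge t^{1/\alpha-\epsilon}\}$ has vanishing density, so its complement (where the walk must keep $\ell_t$ to make $A(t)$ small) percolates; there is no confinement to a box of side $R(t)$, and the ``clusters of diameter $\le R(t)$'' statement applies to $\calH_*$, not to the region the walk actually lives in. What is actually needed there --- and what the paper does --- is a lower bound on the Dirichlet eigenvalue of the \emph{large} box $[-t,t]^d$ with the obstacle set $\calH_*$ removed, proved via the method of enlargement of obstacles; this is a statement about a large porous domain, not about confinement to a small box, and the Donsker--Varadhan estimate $P_0(\ell_t(Q)\ge t/2)\le e^{-ct/R(t)^2}$ for a single small $Q$ does not substitute for it.

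The regime split is also off. You invoke enlargement of obstacles for the finite-mean threshold $(\E[z(0)]-\epsilon)t$, but that case is the easy one: truncating $z\wedge M$ turns $\{A(t)<(\E[z(0)]-\epsilon)t\}$ into an \emph{upper} deviation $\{\int_0^t(M-z(S_u)\wedge M)\,\dd u>\ldots\}$ for a bounded nonnegative scenery, for which a Cram\'er/exponential-moment bound (already available from the upper-deviation work) suffices; the paper does not need enlargement of obstacles there. Conversely, you give no argument at all for $d=1$, $\alpha<1$: your $R(t)^2=t^{1-2\epsilon'+o(1)}$ bookkeeping presupposes the (false) confinement, and in $d=1$ the correct proof is a bare-hands path decomposition by successive visits to the sparse high level set, controlling the number of returns via a truncated-sum large deviation --- there is no spectral/eigenvalue shortcut. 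Finally, your ``level-set reduction to the walk spending a majority of its time on an i.i.d.\ set of vanishing density'' has the logic inverted: on $\{A(t)\ \text{small}\}$ the walk must \emph{avoid} the sparse high set, not live on it, which is precisely why one is led to survival-probability-among-obstacles estimates rather than confinement estimates.
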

It would be an interesting problem to determine the precise decay rate as a function of $\epsilon$. 
\subsection{Results for random walk in layered conductance}
\label{sec:RCM-results}
Let us define the conductance field on $\Z^{1+d}$ by
\begin{equation}
  \omega(x,x\pm \mathbf{e}_i)= 
 \begin{cases}
  z(x_2),& i=1,\\
  1,& i\ge 2,
 \end{cases}
\end{equation}
where we write $x=(x_1,x_2)\in\Z^{1+d}$ with $x_1\in\Z$ and $x_2\in \Z^d$.  
Then the random walk $((X_t)_{t\ge 0},(P^{\omega}_x)_{x\in\Z^{1+d}})$ is defined through its generator
\begin{equation}
\mathcal{L}^\omega f(x)=\sum_{|e|=1}\omega(x,x+e)
(f(x+e)-f(x)). 
\label{eq:def_RCM}
\end{equation}
This is the variable speed random walk in the random conductance field $\omega$. One of the primary interests in this type of model is the central limit behavior. When $\E[z(0)]<\infty$, weak convergence results, such as the quenched functional central limit theorem, are relatively easy to establish since the environment is \emph{balanced} and \emph{reversible} for the environment viewed from the particle process. A similar model in the discrete time is called ``toy model'' in~\cite[Section 2.3]{Bis11}. In this paper, we will focus on the local central limit theorem type results. Our result in fact covers the case $\E[z(0)]=\infty$, where the heat kernel exhibits anomalous behaviors. 

This model is related to the random walk in random scenery through the following representation. Let $((S^1_t,S^2_t)_{t\ge 0},(P_x)_{x\in\Z^{1+d}})$ be the continuous time simple random walk on $\Z^{1+d}$ which jumps to each of the neighboring sites at rate one, where $S^1$ is the first one-dimensional component and $S^2$ is the remaining $d$-dimensional component. 
Then the process $((X_t)_{t\ge 0},(P^{\omega}_x)_{x\in\Z^{1+d}})$ has the representation
\begin{equation}
 (X_t)_{{t\ge 0}}=(S^1_{A^2(t)}, S^2_t)_{{t\ge 0}},
\label{representationRWRS}
\end{equation}
where the clock process is defined by $A^2(t)=\int_0^t z(S^2_u)\dd u$. In particular, it follows that $X_t$ scales like $t^{\frac{s(d,\alpha)}{2}}$ in $\mathbf{e}_1$-direction and $t^{\frac12}$ in the other directions. 
If we define a distance between two points $x=(x_1,x_2)$ and $y=(y_1,y_2)$ in $\Z^{1+d}$ by
\begin{equation}
 \bar{d}(x,y)=|x_1-y_1|^{\frac{1}{s(d,\alpha)}}+|x_2-y_2|
\label{eq:intrinsic}
\end{equation}
with $|\cdot|$ denoting the Euclidean norm on $\Z^d$, then $X_t$ obeys the diffusive $t^{\frac12}$ scaling in this distance. We will see that this distance plays a similar role as the so-called \emph{intrinsic distance}; see~\cite{ADS19} for a recent study in the random conductance setting. 
The representation~\eqref{representationRWRS} plays a key role in the proofs of following results. 

Our first result concerns the behavior of the \emph{on-diagonal} heat kernel. Roughly speaking, its behavior is similar to the case $\omega\equiv 1$ when $\alpha\ge 1$, and different when $\alpha<1$. 


\begin{theorem}
\label{thm:on-diag}
If $\E[z(0)]=\infty$, then $\P$-almost surely,
\begin{equation}
P^\omega_0(X_t=0)=
\begin{cases}
t^{-{3\alpha+1 \over 4\alpha}+o(1)}, &d=1,\\
t^{-{{1 \over 2\alpha}-{d \over 2}}+o(1)}, &d\ge2
\end{cases}
\label{eq:on-diagonal}
\end{equation}
as $t\to \infty$. If $\E[z(0)]<\infty$, then $\P$-almost surely, 
\begin{align}
 P^\omega_0(X_t=0)= \left({(4\pi)^{-\frac{d+1}{2}}\E[z(0)]^{-\frac12}}+o(1)\right)t^{-\frac{1+d}{2}}
\label{eq:standard-ondiag}
\end{align}
as $t\to\infty$. In particular, when $d=1$, $(X_t)_{t\ge 0}$ is transient if $\alpha<1$ and recurrent if $\E[z(0)]<\infty$, and when $d\ge 2$, it is always transient.
\end{theorem}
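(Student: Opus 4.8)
The plan is to use the identity $X_t=(S^1_{A^2(t)},S^2_t)$ from~\eqref{representationRWRS}. Since under $P_0$ the one-dimensional walk $S^1$ is independent of $(S^2,A^2)$, conditioning on the trajectory of $S^2$ gives
\[
P^\omega_0(X_t=0)=E_0\bigl[\mathbf 1_{\{S^2_t=0\}}\,g(A^2(t))\bigr],\qquad g(a):=P_0(S^1_a=0)=e^{-2a}I_0(2a),
\]
with $I_0$ the modified Bessel function; elementarily $c(1\vee a)^{-1/2}\le g(a)\le C(1\vee a)^{-1/2}$ for all $a\ge0$ and $g(a)=(4\pi a)^{-1/2}(1+o(1))$ as $a\to\infty$. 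Here $A^2$ is precisely the $d$-dimensional random walk in random scenery of Section~\ref{sec:RWRS-results}, so Theorems~\ref{RWRS},~\ref{LDP},~\ref{power-law} and Proposition~\ref{prop:RWRSlower} all apply to it; I will also use the local limit theorem $P_0(S^2_t=0)=(4\pi t)^{-d/2}(1+o(1))$. Every statement below is $\P$-almost sure, obtained by intersecting the countably many almost sure events produced by those results.

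\textbf{The case $\E[z(0)]=\infty$.} Write $s:=s(d,\alpha)$; one checks $\tfrac s2+\tfrac d2=\tfrac{3\alpha+1}{4\alpha}$ for $d=1$ and $=\tfrac1{2\alpha}+\tfrac d2$ for $d\ge2$, the exponents in~\eqref{eq:on-diagonal}. For the upper bound I would split the expectation according to whether $A^2(t)\ge t^{s-\epsilon}$. On that event $g(A^2(t))\le Ct^{-(s-\epsilon)/2}$, so that part is at most $Ct^{-(s-\epsilon)/2}P_0(S^2_t=0)\le C't^{-(s-\epsilon)/2-d/2}$; on the complement I bound $g\le1$ and invoke Proposition~\ref{prop:RWRSlower} to get a contribution $\le P_0(A^2(t)\le t^{s-\epsilon})\le\exp\{-t^{c(\epsilon)}\}$. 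Letting $\epsilon\downarrow0$ gives the upper bound $t^{-s/2-d/2+o(1)}$. For the matching lower bound I restrict to $\{A^2(t)\le t^{s+\epsilon}\}\cap\{S^2_t=0\}$, on which $g(A^2(t))\ge c\,t^{-(s+\epsilon)/2}$, so that
\[
P^\omega_0(X_t=0)\ \ge\ c\,t^{-(s+\epsilon)/2}\,P_0\bigl(A^2(t)\le t^{s+\epsilon},\,S^2_t=0\bigr),
\]
and it remains to see that $P_0(A^2(t)>t^{s+\epsilon},\,S^2_t=0)=o(t^{-d/2})$, so that the last probability is at least $\tfrac12P_0(S^2_t=0)$: for $d\ge3$ this is exactly~\eqref{eq:lbd_pinned} with $\rho=\tfrac1\alpha+\epsilon\in(\tfrac1\alpha\vee1,\tfrac d{2\alpha})$, which equals $t^{-\alpha\epsilon-d/2+o(1)}$; for $d\le2$ one has $s+\epsilon>\tfrac d{2\alpha}\vee1$, so Theorem~\ref{RWRS} gives a stretched-exponential bound on the unpinned (hence a fortiori the pinned) event. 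Letting $\epsilon\downarrow0$ reproduces~\eqref{eq:on-diagonal}.

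\textbf{The case $\E[z(0)]<\infty$.} Here the law of large numbers makes $A^2(t)\approx\E[z(0)]t$, so $g(A^2(t))\approx(4\pi\E[z(0)]t)^{-1/2}$, which together with $P_0(S^2_t=0)\sim(4\pi t)^{-d/2}$ produces the constant in~\eqref{eq:standard-ondiag}. To make this rigorous, fix small $\epsilon>0$ and split the expectation over $G_t:=\{|A^2(t)-\E[z(0)]t|\le\epsilon t\}$ and $G_t^c$. On $G_t\cap\{S^2_t=0\}$ one has $g(A^2(t))=(4\pi\E[z(0)]t)^{-1/2}(1+O(\epsilon)+o(1))$ uniformly, so that part equals $(4\pi\E[z(0)]t)^{-1/2}(1+O(\epsilon)+o(1))\,P_0(G_t\cap\{S^2_t=0\})$; and $P_0(G_t\cap\{S^2_t=0\})=P_0(S^2_t=0)(1+o(1))$ because the pinned deviation $P_0(G_t^c\cap\{S^2_t=0\})$ is $o(t^{-d/2})$ --- on the lower side by the $\E[z(0)]<\infty$ form of Proposition~\ref{prop:RWRSlower}, on the upper side by the $ct$-version of~\eqref{eq:lbd_pinned} when $1<\alpha<d/2$ and by Theorem~\ref{LDP} when $\alpha>d/2$. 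For the complementary part I split further: where $A^2(t)>(\E[z(0)]+\epsilon)t$, bound $g\le Ct^{-1/2}$ to get a contribution $\le Ct^{-1/2}\cdot o(t^{-d/2})=o(t^{-(d+1)/2})$; where $A^2(t)<(\E[z(0)]-\epsilon)t$, bound $g\le1$ to get $\le\exp\{-t^{c(\epsilon)}\}$. Collecting terms and sending $\epsilon\downarrow0$ gives~\eqref{eq:standard-ondiag}.

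\textbf{Recurrence, transience, and the main difficulty.} Since $X$ is an irreducible, non-exploding continuous-time Markov chain (non-explosion is immediate from~\eqref{representationRWRS}), it is transient iff $\int_0^\infty P^\omega_0(X_t=0)\dd t<\infty$. For $d\ge2$ every exponent in~\eqref{eq:on-diagonal}--\eqref{eq:standard-ondiag} exceeds $1$ (for $d\ge3$ one may instead note that $S^2$ itself is transient), so $X$ is transient; for $d=1$, $\tfrac{3\alpha+1}{4\alpha}>1\iff\alpha<1$ gives transience when $\E[z(0)]=\infty$, while under $\E[z(0)]<\infty$ the heat kernel is $\sim Ct^{-1}$ and its integral diverges, so $X$ is recurrent. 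The crux of the whole argument is the \emph{pinned} deviation estimates: one must understand $A^2(t)$ not under $P_0$ but on the polynomially costly event $\{S^2_t=0\}$, and the lower bound in the $\E[z(0)]=\infty$, $d\ge3$ regime genuinely needs the pinned power law~\eqref{eq:lbd_pinned} --- the unpinned~\eqref{eq:unpinned} alone would be off by the factor $t^{d/2}$. Verifying that a sufficiently sharp pinned estimate is available in every parameter range --- in particular near the boundary cases $\alpha=1$ and $\alpha=d/2$ --- is the delicate point; away from them the remainder is routine bookkeeping.
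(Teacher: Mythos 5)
Your proposal follows the paper's strategy exactly: use the representation~\eqref{HK-RWRS} to reduce $P^\omega_0(X_t=0)$ to a negative moment of $A(t)$ on $\{S_t=0\}$, obtain the upper bound by discarding $\{A(t)<t^{s-\epsilon}\}$ via Proposition~\ref{prop:RWRSlower}, and obtain the lower bound by restricting to $\{A(t)<t^{s+\epsilon}\}$ and showing the pinned upper-deviation probability is $o(t^{-d/2})$, using Theorem~\ref{power-law} (eq.~\eqref{eq:lbd_pinned}) when $d\ge3$ and Theorem~\ref{RWRS} when $d\le2$. This is Sections~5 and~6 of the paper, nearly verbatim.

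One small remark on the $\E[z(0)]<\infty$ case: your disjunction ``the $ct$-version of~\eqref{eq:lbd_pinned} when $1<\alpha<d/2$ and Theorem~\ref{LDP} when $\alpha>d/2$'' leaves out the boundary $\alpha=d/2$ (a real parameter value, e.g.\ $d\ge3$ with $\alpha=d/2$ always has finite mean, and $d=2$, $\alpha=1$ can). The paper sidesteps this entirely: it bounds the pinned upper deviation by the Markov-property decomposition at $t/2$ (see~\eqref{divide-mid1}--\eqref{divide-mid2} and~\eqref{eq:LLN_pinned}),
\begin{equation*}
 P_0\left(A(t)\ge Mt,\ S_t=0\right)\le c\,t^{-d/2}\,P_0\left(A(\tfrac{t}{2})\ge \tfrac{M}{2}t\right),
\end{equation*}
and then invokes only the (weak) law of large numbers for $A(t)$, which requires nothing beyond $\E[z(0)]<\infty$ and hence covers all $\alpha$ uniformly. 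Replacing your two-case appeal with this one line closes the gap and also removes the unnecessary reliance on Theorems~\ref{power-law} and~\ref{LDP} in the finite-mean regime. Apart from this, and a harmless slip quoting the $d=1$ threshold in Theorem~\ref{RWRS} as $\frac{d}{2\alpha}\vee1$ rather than $\frac{\alpha+1}{2\alpha}\vee1$ (the two coincide with $s(1,\alpha)$ when $\alpha\le1$ anyway), the argument is sound and matches the paper's.
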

\begin{remark}
\begin{enumerate}[leftmargin=20pt]
 \item By irreducibility of the random walk, the return probability $P^\omega_x(X_t=x)$ has the same asymptotics as in~\eqref{eq:on-diagonal} for all $x\in\Z^d$, $\P$-almost surely. This implies that when $\alpha<1$, the weighted graph $(\Z^{1+d}, \omega)$ has the spectral dimension 
\begin{equation}
d_{\text s}= -2\lim_{t\to\infty}\frac{P^\omega_x(X_t=x)}{\log t}=
\begin{cases}
 {3\alpha+1 \over 2\alpha}, &\textrm{if }d=1,\\[5pt]
 {1 \over \alpha}+d, &\textrm{if }d\ge 2,
\end{cases}
\label{eq:spec-dim}
\end{equation}
which is strictly greater than $1+d$. 
 \item The on-diagonal behavior of the heat kernel is often related to the volume (cardinality) of an intrinsic distance ball with radius $t^{\frac12}$. In our setting, the above asymptotics are the same as the volume of the ball with respect to $\bar{d}$ in~\eqref{eq:intrinsic} with radius $t^{\frac12}$. 
\end{enumerate}
\end{remark}
The right-hand side of~\eqref{eq:standard-ondiag} coincides with the asymptotics of $P^{\E[\omega]}_0(X_t=0)$, the return probability for the random walk in the averaged conductance. It is natural to ask when this extends to the local central limit theorem. The following result answers affirmatively when $\alpha>\frac{d}{2}\vee 1$, and negatively when $\alpha<\frac{d}{2}\vee 1$. 
\begin{proposition}
\label{rem:LCLT}
For any $R>0$, the following hold $\P$-almost surely: 
\begin{enumerate}
 \item When $\alpha>\frac{d}{2}\vee 1$, 
\begin{equation}
\lim_{t\to\infty}\sup_{|x|\le R t^{1/2}}\left|\frac{P^{\omega}_0(X_t=x)}{P^{\E[\omega]}_0(X_t=x)}-1\right|=0. 
\end{equation}
 \item When $d\ge 3$ and $\alpha<\frac{d}{2}\vee 1$, 
\begin{equation}
\lim_{t\to\infty}\inf_{|x|\le R t^{1/2}}\frac{P^{\omega}_0(X_t=x)}{P^{\E[\omega]}_0(X_t=x)}=0.
\end{equation}
\end{enumerate}
\end{proposition}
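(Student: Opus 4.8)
The plan is to reduce both claims to the representation~\eqref{representationRWRS}, which expresses the heat kernel of $X$ through the clock process $A^2(t)=\int_0^t z(S^2_u)\,{\rm d}u$; note that $A^2(t)$ has exactly the law of the random-walk-in-random-scenery functional of Section~\ref{sec:RWRS-results}, so Theorem~\ref{LDP} and Proposition~\ref{prop:RWRSlower} apply to it. Write $q_s(k):=P_0(S^1_s=k)$ for the heat kernel of the rate-one continuous-time simple random walk on $\Z$. Conditioning on the path of $S^2$ in~\eqref{representationRWRS} yields
\begin{equation*}
 P^\omega_0(X_t=x)=E_0\bigl[q_{A^2(t)}(x_1)\mathbf 1\{S^2_t=x_2\}\bigr],
 \qquad
 P^{\E[\omega]}_0(X_t=x)=q_{\mu t}(x_1)P_0(S^2_t=x_2),
\end{equation*}
with $\mu:=\E[z(0)]$; the second identity uses that the averaged environment is constant, so that the corresponding walk factors into a rate-$\mu$ walk on $\Z$ and an independent rate-one walk on $\Z^d$. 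The restriction $|x|\le Rt^{1/2}$ enters only through the elementary local-CLT bounds $q_{\mu t}(x_1)\gtrsim t^{-1/2}$, $P_0(S^2_t=x_2)\gtrsim t^{-d/2}$, hence $P^{\E[\omega]}_0(X_t=x)\asymp t^{-(d+1)/2}$ there.

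For (i), the hypothesis $\alpha>\tfrac d2\vee1$ forces $\alpha>1$, so $\mu<\infty$ and, by~\eqref{eq:scaling}, $A^2(t)$ is of linear order. Fix $\epsilon>0$. Theorem~\ref{LDP} (upper tail at threshold $(\mu+\epsilon)t>\E[z(0)]\,t$; its hypothesis is exactly $\alpha>\tfrac d2\vee1$) and Proposition~\ref{prop:RWRSlower} (lower tail at threshold $(\mu-\epsilon)t$, using $\mu<\infty$) give that $\P$-a.s., for all large $t$, $P_0(A^2(t)\notin[(\mu-\epsilon)t,(\mu+\epsilon)t])\le\exp\{-t^{c(\epsilon)}\}$. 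On the complementary event the one-dimensional local CLT $q_s(k)=(4\pi s)^{-1/2}e^{-k^2/4s}(1+o(1))$, uniform for $|k|\le Rt^{1/2}$ and $s\asymp t$, together with boundedness of $k^2/t$ there, gives $q_{A^2(t)}(x_1)=q_{\mu t}(x_1)(1+O(\epsilon)+o(1))$ uniformly in $|x_1|\le Rt^{1/2}$. Splitting the expectation defining $P^\omega_0(X_t=x)$ over this event and its complement, the main part equals $q_{\mu t}(x_1)P_0(S^2_t=x_2)(1+O(\epsilon)+o(1))$ (removing the event constraint from $P_0(S^2_t=x_2,\cdot)$ costs $\le\exp\{-t^{c(\epsilon)}\}=o(t^{-d/2})$) and the remainder is $\le\exp\{-t^{c(\epsilon)}\}=o(t^{-(d+1)/2})$. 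Dividing by $P^{\E[\omega]}_0(X_t=x)\asymp t^{-(d+1)/2}$ gives a ratio $1+O(\epsilon)+o(1)$ uniformly in $|x|\le Rt^{1/2}$; letting $\epsilon\downarrow0$ along a sequence finishes (i).

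For (ii), let $d\ge3$ and $\alpha<\tfrac d2\,(=\tfrac d2\vee1)$; we may assume $\E[z(0)]<\infty$, for when $\E[z(0)]=\infty$ the claim is immediate from Theorem~\ref{thm:on-diag} at $x=0$. Let $y^\ast=y^\ast(t)$ maximise $z$ over $\{y\in\Z^d:0<|y|\le Rt^{1/2}\}$. A standard extreme-value bound for the i.i.d.\ field~\eqref{ass-tail}, combined with Borel--Cantelli along $t=2^n$ and monotonicity in $t$, shows $z(y^\ast)=t^{\frac d{2\alpha}+o(1)}$ $\P$-a.s. Take $x=(0,y^\ast)$. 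From $q_s(0)\le C(1+s)^{-1/2}$ and $A^2(t)\ge z(y^\ast)\ell_t(y^\ast)$, where $\ell_t(\cdot)$ is the local time of $S^2$, we get
\begin{equation*}
 P^\omega_0(X_t=(0,y^\ast))\le Cz(y^\ast)^{-1/2}E_0\bigl[\ell_t(y^\ast)^{-1/2}\mathbf 1\{S^2_t=y^\ast\}\bigr]+CP_0\bigl(S^2_t=y^\ast,\ \ell_t(y^\ast)<z(y^\ast)^{-1}\bigr),
\end{equation*}
with the last term $O(z(y^\ast)^{-1}t^{-d/2})$. Decomposing the remaining expectation by the last time $L\le t$ at which $S^2$ jumps into $y^\ast$ (on $\{S^2_t=y^\ast\}$ one has $\ell_t(y^\ast)\ge t-L$ and $P_0(L\in{\rm d}u,S^2_t=y^\ast)=h_u e^{-2d(t-u)}\,{\rm d}u$ with $h_u=\sum_{y\sim y^\ast}P_0(S^2_u=y)\le C(1+u)^{-d/2}$) gives
\begin{equation*}
 E_0\bigl[\ell_t(y^\ast)^{-1/2}\mathbf 1\{S^2_t=y^\ast\}\bigr]\le C\int_0^t(t-u)^{-1/2}(1+u)^{-d/2}e^{-2d(t-u)}\,{\rm d}u\le Ct^{-d/2}.
\end{equation*}
Hence $P^\omega_0(X_t=(0,y^\ast))\le Cz(y^\ast)^{-1/2}t^{-d/2}=t^{-\frac d{4\alpha}-\frac d2+o(1)}$, while $P^{\E[\omega]}_0(X_t=(0,y^\ast))\asymp t^{-(d+1)/2}$, so the infimum over $|x|\le Rt^{1/2}$ is $\le t^{\frac12-\frac d{4\alpha}+o(1)}\to0$ because $\alpha<d/2$.

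The hard part, in both cases, is the quantitative control of the clock $A^2(t)$. In (i) it is its concentration around $\mu t$ on a set of overwhelming probability, supplied precisely by Theorem~\ref{LDP} and Proposition~\ref{prop:RWRSlower}; the remaining steps are routine local-CLT estimates. In (ii) it is showing that, conditionally on $S^2_t=y^\ast$, the local time $\ell_t(y^\ast)$—equivalently the factor $z(y^\ast)^{-1/2}$ by which the atypically large scenery value inflates the clock—is not too small, that is, the last-entrance estimate above, and verifying that the resulting exponent $\tfrac12-\tfrac d{4\alpha}$ is negative exactly when $\alpha<d/2$.
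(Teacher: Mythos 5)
Your proof is correct. Part~(i) is essentially the paper's argument: both invoke Theorem~\ref{LDP} (upper tail at $(\mu+\epsilon)t$, available since $\alpha>\tfrac d2\vee1$) and Proposition~\ref{prop:RWRSlower} (lower tail at $(\mu-\epsilon)t$, since $\mu=\E[z(0)]<\infty$) to concentrate the clock $A^2(t)$ in $[(\mu-\epsilon)t,(\mu+\epsilon)t]$ up to a stretched-exponentially small exceptional event, and then compare $q_{A^2(t)}(x_1)$ with $q_{\mu t}(x_1)$ by the local CLT uniformly over $|x_1|\le Rt^{1/2}$. For part~(ii) your decomposition is a time-reversed cousin of the paper's: the paper picks $x_2$ with $z(x_2)\ge t^{d/(2\alpha)-\epsilon}$, reverses time so the walk starts at $x_2$, and splits on whether the first jump time $\sfD_1$ exceeds $t^{-\epsilon}$ (if yes, $A(t)\ge t^{d/(2\alpha)-2\epsilon}$ kills $p_{A(t)}$; if no, $P_{x_2}(S_t=0,\,\sfD_1\le t^{-\epsilon})\le ct^{-\epsilon-d/2}$). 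You keep the forward orientation, decompose on the last entrance time $L$ to $y^*$, and evaluate $E_0\bigl[\ell_t(y^*)^{-1/2}\mathbf 1\{S^2_t=y^*\}\bigr]$ by an explicit last-entrance convolution integral bounded by $Ct^{-d/2}$, with the short-occupation tail $P_0(\ell_t(y^*)<z(y^*)^{-1},S^2_t=y^*)=O(z(y^*)^{-1}t^{-d/2})$ handled the same way. The two decompositions are the same mechanism seen from opposite ends of the trajectory, and both rest on the observation that a walk pinned at a high-scenery site necessarily occupies it for macroscopic time, inflating $A^2(t)$ and deflating the one-dimensional factor; your version trades the paper's crude $t^{-\epsilon}$ threshold for an exact integral, at no real cost. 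Both you and the paper dispose of the $\alpha<1$ sub-case by pointing to Theorem~\ref{thm:on-diag} and then work with $1\le\alpha<\tfrac d2$; note that you only actually use the lower bound $z(y^*)\ge t^{d/(2\alpha)-\epsilon}$, not the two-sided extreme-value asymptotics $t^{d/(2\alpha)+o(1)}$ that you state, but this over-claim is harmless.
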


Our next result is about the decay rate of $P^\omega_0(X_t=t^\delta\mathbf{e}_1)$. When $\delta>\frac{s(d,\alpha)}{2}\vee\frac12$, this is a large or moderate deviation probability, and in~\cite[Theorem~4]{DF18}, it is proved to decay stretched exponentially except when $d\ge 3$, $\alpha<\frac{d}{2}$ and $\delta<\frac{d}{4\alpha}$. In the last regime, unlike in the independent and identically distributed conductance case (cf.~\cite{BD10}), it is shown to exhibit a power law decay in~\cite[Theorem~4]{DF18}. The following theorem determines the exponent.
\begin{theorem}
\label{thm:RCM-power}
Let $d\ge 3$, $\alpha<\frac{d}{2}$ and $\delta\in[0,\frac{d}{4\alpha})$. Then, 
\begin{equation}
 P^\omega_0(X_t=t^\delta\mathbf{e}_1)=t^{-r(d,\alpha,\delta)+o(1)} 
\end{equation}
as $t\to\infty$, where 
\begin{equation}
\label{eq:RCM_exponent}
r(d,\alpha,\delta)=
\begin{cases}
\frac{1}{2\alpha}\vee \frac12+\frac{d}{2},& \delta\in \left[0,\frac{1}{2\alpha}\vee \frac12\right],\\
\delta(1+2\alpha)-1+\frac{d}{2},& \delta\in \left(\frac{1}{2\alpha}\vee \frac12,\frac{d}{4\alpha}\right).
\end{cases} 
\end{equation}
\end{theorem}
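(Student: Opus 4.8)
The plan is to reduce the statement to the random walk in random scenery through the representation~\eqref{representationRWRS}. Conditioning on the trajectory of $S^2$ — which makes $A^2(t)$ and the event $\{S^2_t=0\}$ measurable — and using the independence of $S^1$ and $S^2$, one has
\begin{equation*}
 P^\omega_0(X_t=t^\delta\mathbf{e}_1)=E_0\!\left[P^{S^1}_0\!\bigl(S^1_{A^2(t)}=\lfloor t^\delta\rfloor\bigr)\,;\,S^2_t=0\right],
\end{equation*}
where $P^{S^1}_0$ is the law of the one-dimensional continuous-time simple random walk and the outer expectation is over the $d$-dimensional walk $S^2$. The clock $A^2(t)=\int_0^t z(S^2_u)\dd u$ is precisely the scenery sum of Theorem~\ref{RWRS}, \eqref{eq:lbd_pinned} and Proposition~\ref{prop:RWRSlower} driven by $S^2$, and for $S^1$ I will use the standard local-limit facts: $P^{S^1}_0(S^1_\sigma=k)\le C\sigma^{-1/2}$ for all $\sigma\ge1$ and all $k$, with the refinements $P^{S^1}_0(S^1_\sigma=k)\le C\sigma^{-1/2}e^{-ck^2/\sigma}$ when $|k|\le\sigma$ and a much smaller Poisson-type tail when $|k|>\sigma$, together with the matching lower bound $P^{S^1}_0(S^1_\sigma=k)\ge c\sigma^{-1/2}$ when $|k|\le\sigma^{1/2}$; also $P_0(S^2_t=0)\asymp t^{-d/2}$ by the local CLT. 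Write $s:=s(d,\alpha)=\tfrac1\alpha\vee1$.

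For the lower bound I will retain only a narrow window for $A^2(t)$. When $\delta\le\tfrac1{2\alpha}\vee\tfrac12$ (so $2\delta\le s$) I restrict to $\{t^{2\delta}\le A^2(t)\le t^{s+\epsilon},\ S^2_t=0\}$: Proposition~\ref{prop:RWRSlower} makes $P_0(A^2(t)<t^{s-\epsilon})$ stretched-exponentially small, \eqref{eq:lbd_pinned} with $\rho=s+\epsilon$ gives $P_0(A^2(t)\ge t^{s+\epsilon},\,S^2_t=0)\ll t^{-d/2}$, so the window has probability $\ge t^{-d/2+o(1)}$; on it $\lfloor t^\delta\rfloor\le (A^2(t))^{1/2}$, whence the $S^1$-factor is $\ge c\,(A^2(t))^{-1/2}\ge c\,t^{-(s+\epsilon)/2}$, and $\epsilon\downarrow0$ yields $t^{-s/2-d/2+o(1)}=t^{-r+o(1)}$. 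When $\delta\in(\tfrac1{2\alpha}\vee\tfrac12,\tfrac d{4\alpha})$ (so $2\delta\in(s,\tfrac d{2\alpha})$) I instead restrict to $\{t^{2\delta}\le A^2(t)\le t^{2\delta+\epsilon},\ S^2_t=0\}$, which by two applications of~\eqref{eq:lbd_pinned} has probability $\ge P_0(A^2(t)\ge t^{2\delta},S^2_t=0)-P_0(A^2(t)\ge t^{2\delta+\epsilon},S^2_t=0)\ge t^{-2\alpha\delta+1-d/2+o(1)}$; there the $S^1$-factor is $\ge c\,t^{-\delta-\epsilon/2}$, and $\epsilon\downarrow0$ gives $t^{-\delta-2\alpha\delta+1-d/2+o(1)}=t^{-r+o(1)}$.

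For the upper bound I will split $E_0[\,\cdot\,;S^2_t=0]$ over the dyadic blocks $\{A^2(t)\in[e^j,e^{j+1})\}$; since $A^2(t)\le t^{O(1)}$ off an event of super-polynomially small probability, only $t^{o(1)}$ blocks contribute. On a block with $e^j=t^\beta$ I bound the $S^1$-factor by $Ct^{-\beta/2}$, refined to $Ct^{-\beta/2}e^{-ct^{2\delta-\beta}}$ when $\beta<2\delta$ (and by the Poisson tail when $\beta<\delta$). Blocks with $\beta\le s-\epsilon$ have total contribution $\le P_0(A^2(t)\le t^{s-\epsilon},S^2_t=0)$, stretched-exponentially small by Proposition~\ref{prop:RWRSlower}; blocks with $\beta>\tfrac d{2\alpha}$ are killed by the stretched-exponential upper tail of Theorem~\ref{RWRS}; blocks with $s-\epsilon<\beta<2\delta-\eta$ are killed by $e^{-ct^{2\delta-\beta}}$ once the cutoff $\eta=\eta(t)\downarrow0$ is taken slowly enough that $t^{\eta(t)}\to\infty$. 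On the remaining range I bound $P_0(A^2(t)\ge e^j,S^2_t=0)$ by $Ct^{-d/2}$ trivially and, when $\beta\ge s$, by $t^{-\alpha\beta+1-d/2+o(1)}$ from~\eqref{eq:lbd_pinned} (extended to all such $\beta$ by monotonicity); the resulting per-block bound $t^{-\beta/2}\bigl(t^{-d/2}\wedge t^{-\alpha\beta+1-d/2}\bigr)$ is non-increasing in $\beta$, and its value at the left endpoint of the surviving range — which tends to $\beta=s$ in the first regime and to $\beta=2\delta$ in the second — equals $t^{-r+o(1)}$ in each case. Multiplying by the $t^{o(1)}$ number of blocks and sending $\epsilon,\eta\downarrow0$ completes the bound; the almost-sure inputs are used for a fixed countable family of parameters, and the final $o(1)$ absorbs all the limits.

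I expect the genuinely delicate point to be the behaviour at the critical scale $A^2(t)\asymp t^{2\delta}$ in the second regime, where $P_0(A^2(t)\gtrsim t^{2\delta},S^2_t=0)\approx t^{-2\alpha\delta+1-d/2}$ and the one-dimensional return factor are both only ``marginally'' small: the crude estimate $P^{S^1}_0(\cdot)\le 1$ loses a factor $t^\delta$, and one must instead use that the uniform bound $C\sigma^{-1/2}$ is of the exact order $t^{-\delta}$ at $\sigma\asymp t^{2\delta}$ and balance it against the sharp scenery estimate~\eqref{eq:lbd_pinned}, all while keeping the number of dyadic blocks subpolynomial — this is the role of the slowly vanishing cutoff $\eta(t)$. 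The matching lower bound captures the same balance through the explicit window $\{A^2(t)\in[t^{2\delta},t^{2\delta+\epsilon}],\,S^2_t=0\}$.
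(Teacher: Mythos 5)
Your proposal is correct and follows essentially the same route as the paper: reduce via the representation $X_t=(S^1_{A^2(t)},S^2_t)$, decompose according to the scale of $A^2(t)$, kill the low scales by Proposition~\ref{prop:RWRSlower} and the Gaussian factor, kill the high scales by Theorem~\ref{RWRS}, and balance the $\sigma^{-1/2}$ local-limit factor against the pinned RWRS tail~\eqref{eq:lbd_pinned} on the surviving window. The paper organizes the upper bound by an integration by parts in $u$ followed by a discretization into intervals $[t^{k\epsilon},t^{(k+1)\epsilon}]$, while you use dyadic blocks directly and a slowly vanishing cutoff; these are cosmetically different but equivalent. One small point worth tightening: your displayed per-block bound $t^{-\beta/2}(t^{-d/2}\wedge t^{-\alpha\beta+1-d/2})$ is, as a formula, invalid for $\beta\in(\tfrac1\alpha,1)$ when $\alpha>1$ (there~\eqref{eq:lbd_pinned} does not apply and only the trivial $t^{-d/2}$ is available), but since those blocks are already discarded by Proposition~\ref{prop:RWRSlower} this does not affect the conclusion; similarly, at the boundary $\delta=\tfrac12$ with $\alpha>1$ and $\E[z(0)]<1$ the lower-bound window should be the multiplicative one $\{A^2(t)\in[c't,c''t]\}$ with $c'<\E[z(0)]<c''$ rather than $[t^{2\delta},t^{s+\epsilon}]$ — a normalization-type issue that the paper's own lower-bound step also glosses over.
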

While this power law decay is new for nearest neighbor walks, it is standard for random walks with unbounded jumps. In our setting, the unbounded conductance along lines play a similar role to a long distance jump. More precisely, the above asymptotics is the same as the probability of the strategy explained in Figure~\ref{fig:highway}.
\begin{figure}[h]
\includegraphics{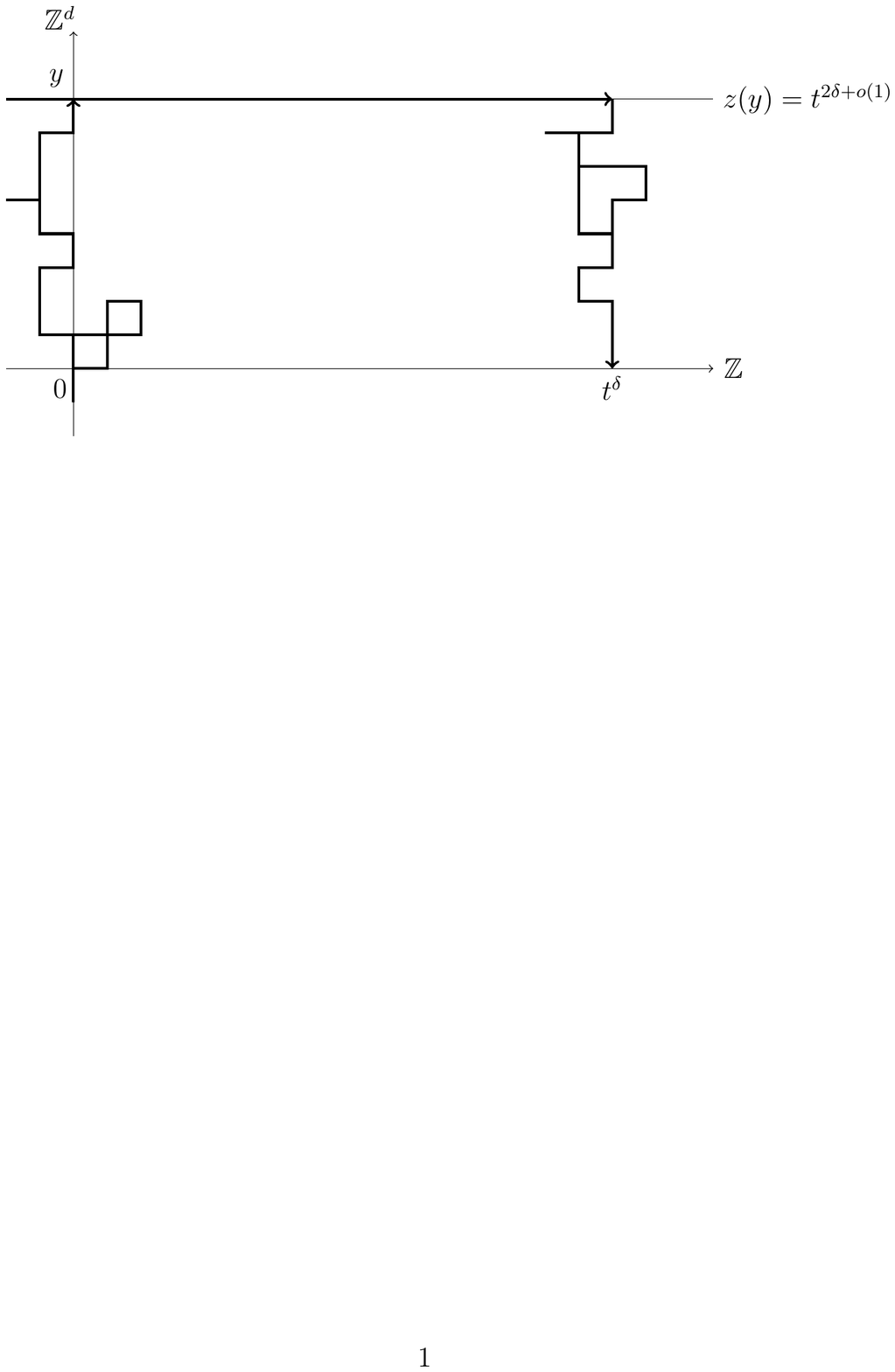}
\caption{A strategy of the random walk that gives the lower bound for $P^\omega_0(X_t=t^\delta \mathbf{e}_1)$. The thick polyline represents the range of the random walk. The second coordinate walk $X^2$ visits $y\in\Z^d$ with $z(y)\ge t^{2\delta}$, spends a unit time there, and then come back to the origin. While the second coordinate is $y$, the first coordinate walk $X^1$ moves in a speed faster than $t^{2\delta}$ and hence can reach $t^{\delta}=\sqrt{t^{2\delta}}$ in the unit time.}
\label{fig:highway}
\end{figure}

Based on the above results on heat kernel asymptotics, we can derive estimates on the Green function defined by 
\begin{equation}
 g^\omega(x,y)=\int_0^\infty P^\omega_x(X_t=y)\dd t.
\label{eq:def_Green}
\end{equation} 
Note that our random walk is transient and $g^\omega$ is well-defined if $\alpha<1$ or $d\ge 2$. When $d\ge 3$, we have the standard exponent for $\alpha\ge \frac{d}{4}\vee 1$, and non-standard exponents for $\alpha< \frac{d}{4}\vee 1$ depending on $d$ and $\alpha$. When $d=2$, the decay exponent is always $-1$, which is the same as for the three dimensional simple random walk, regardless the value of $\alpha$. The non-standard behaviors are caused by the power law decay in Theorem~\ref{thm:RCM-power}.
\begin{theorem}
 \label{thm:Green}
For $\alpha\le \frac{d}{2}\vee 1$, $\P$-almost surely, 
\begin{equation}
 g^\omega(0,n\mathbf{e}_1) = 
\begin{cases}
n^{-\frac{1-\alpha}{1+\alpha}+o(1)},& d=1\text{ and } \alpha<1,\\
n^{-1-(1\wedge \alpha\wedge \frac{4\alpha}{d})(d-2)+o(1)},& d\ge 2
\end{cases}
\end{equation}
as $n\to\infty$. For $d\ge 2$ and $\alpha> \frac{d}{2}$, $\P$-almost surely, 
\begin{equation}
 g^\omega(0,n\mathbf{e}_1) =
 \left({\frac{1}{4}\Gamma(\tfrac{d-1}{2})\E[z(0)]^{\frac{d-2}{2}}}+o(1)\right)n^{1-d}
\label{eq:standard-green}
\end{equation}
as $n\to\infty$. 
\end{theorem}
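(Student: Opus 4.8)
The plan is to write $g^\omega(0,n\mathbf{e}_1)=\int_0^\infty P^\omega_0(X_t=n\mathbf{e}_1)\dd t$ and to identify the one time scale that carries the mass of the integral. Parametrize a time by $\delta_t:=\log n/\log t$, so $n=t^{\delta_t}$. In the regime $\alpha\le\frac{d}{2}\vee 1$ the heat-kernel input is: for $d\ge 3$ and $\alpha<\frac{d}{2}$, Theorem~\ref{thm:RCM-power} gives $P^\omega_0(X_t=n\mathbf{e}_1)=t^{-r(d,\alpha,\delta_t)+o(1)}$ for $t\ge n^{4\alpha/d}$, while \cite[Theorem~4]{DF18} gives a stretched-exponential bound for $t\le n^{4\alpha/d}$; for $d\le 2$ the representation~\eqref{representationRWRS}, the independence of $S^1$ and $S^2$, and the unimodality $p^{(1)}_s(0,k)\le p^{(1)}_s(0,0)$ of the one-dimensional heat kernel give $P^\omega_0(X_t=n\mathbf{e}_1)=E_0[\mathbf{1}_{\{S^2_t=0\}}p^{(1)}_{A^2(t)}(0,n)]\le P^\omega_0(X_t=0)=t^{-d_{\mathrm{s}}/2+o(1)}$ by Theorem~\ref{thm:on-diag}, while for $t\ll\bar{d}(0,n\mathbf{e}_1)^2=n^{2/s(d,\alpha)}$ the event is a moderate deviation handled by~\cite{DF18}, and small $t$ is even more suppressed since $A^2(t)$ cannot reach order $n$ that fast. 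Inserting these bounds on each block of a dyadic splitting of the time integral gives $g^\omega(0,n\mathbf{e}_1)=n^{\sup_\delta f(\delta)+o(1)}$ with $f(\delta)=(1-r(d,\alpha,\delta))/\delta$, where for $d\le 2$ one sets $r\equiv d_{\mathrm{s}}/2=\tfrac{s(d,\alpha)+d}{2}$ on $\{\delta\le\tfrac{s(d,\alpha)}{2}\}$ and $r=+\infty$ beyond. On each piece $f$ has the form $a/\delta+b$, with kinks only at $\delta=\tfrac{s(d,\alpha)}{2}$ and $\delta=\tfrac{d}{4\alpha}$, so a short case check in $(d,\alpha)$ gives $\sup_\delta f(\delta)=-\tfrac{1-\alpha}{1+\alpha}$ for $d=1$ and $\sup_\delta f(\delta)=-1-(1\wedge\alpha\wedge\tfrac{4\alpha}{d})(d-2)$ for $d\ge 2$, the maximizer being $\tfrac{s(d,\alpha)}{2}$ for $d\in\{2,3\}$ and for $d\ge 4$ with $\alpha\ge\tfrac{d}{4}$, and $\tfrac{d}{4\alpha}$ for $d\ge 4$ with $\alpha<\tfrac{d}{4}$. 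This is the asserted exponent.

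The matching lower bound comes from restricting the time integral to a fixed multiplicative window around $t\asymp n^{1/\delta^\ast}$: for $d\ge 3$ this is immediate from the lower bound in Theorem~\ref{thm:RCM-power}, and for $d\le 2$ one combines the lower bound of Theorem~\ref{thm:on-diag} for $P^\omega_0(X_t=0)$ with $p^{(1)}_s(0,n)\ge\tfrac12 p^{(1)}_s(0,0)$ once $s\ge 4n^2$, noting that just above $\bar{d}(0,n\mathbf{e}_1)^2$ one has $A^2(t)\ge 4n^2$ with probability bounded away from $0$ (a lower-deviation estimate for $A^2$, in the spirit of Proposition~\ref{prop:RWRSlower}). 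In the complementary regime $d\ge 2$, $\alpha>\frac{d}{2}$ (hence $\E[z(0)]<\infty$) a sharp constant is required, so we use the local CLT instead: Proposition~\ref{rem:LCLT}(i) gives $P^\omega_0(X_t=n\mathbf{e}_1)=(1+o(1))P^{\E[\omega]}_0(X_t=n\mathbf{e}_1)$ uniformly for $t\ge n^2/R^2$, and the classical local CLT gives $P^{\E[\omega]}_0(X_t=n\mathbf{e}_1)=(1+o(1))(4\pi\E[z(0)]t)^{-1/2}(4\pi t)^{-d/2}e^{-n^2/(4\E[z(0)]t)}$; integrating over $t\in[n^2/R^2,\infty)$ and then letting $R\to\infty$, the substitution $v=n^2/(4\E[z(0)]t)$ turns the integral into $\Gamma(\tfrac{d-1}{2})$ times elementary factors and produces~\eqref{eq:standard-green}. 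The leftover range $t<n^2/R^2$ contributes $o(n^{1-d})$: for $t\le n^\gamma$ (small fixed $\gamma>0$) it is super-exponentially small in $n$, and on $n^\gamma\le t<n^2/R^2$ one uses the stretched-exponential bound of~\cite{DF18}, whose rate is bounded below once $\delta_t$ stays away from $\tfrac12$.

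I expect the real obstacle to lie at the dominant scale itself. When $d\le 2$ the Green mass concentrates at $t\asymp\bar{d}(0,n\mathbf{e}_1)^2=n^{2/s(d,\alpha)}$, precisely the crossover between Theorem~\ref{thm:on-diag} and the deviation estimate of~\cite{DF18}; when $d\ge 5$ and $\alpha<\tfrac{d}{4}$ it concentrates at $t\asymp n^{4\alpha/d}$, the endpoint of the range covered by Theorem~\ref{thm:RCM-power}. Since already one multiplicative window of $t$ around this scale reproduces the full answer, the upper bound $P^\omega_0(X_t=n\mathbf{e}_1)\le t^{-r(d,\alpha,\delta_t)+o(1)}$ has to remain valid up to and including that scale; this calls either for extracting the borderline case from the proofs of Theorems~\ref{thm:RCM-power} and~\ref{thm:on-diag}, or for an independent near-diagonal upper bound expressed through the distance $\bar{d}$ of~\eqref{eq:intrinsic}. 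A second, more routine, point is that the cited estimates carry error exponents $o(1)$ that are a priori not uniform in $t$; to integrate them one discretizes $\delta$ on a mesh tending to $0$ and interpolates using the continuity of $r$ and the monotonicity of $t\mapsto t^{(d+1)/2}P^\omega_0(X_t=0)$-type quantities, together with a mild Lipschitz lower bound on the deviation rate of~\cite{DF18} near its threshold, so that the thin transition zone adjacent to the dominant scale is harmless.
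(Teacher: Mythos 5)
Your overall strategy---expressing $g^\omega(0,n\mathbf{e}_1)=\int_0^\infty P^\omega_0(X_t=n\mathbf{e}_1)\,\mathrm{d}t$, inserting the heat-kernel estimates from Theorems~\ref{thm:on-diag} and~\ref{thm:RCM-power}, and identifying the dominant time scale---is the same as the paper's, and your variational reformulation $g^\omega=n^{\sup_\delta f(\delta)+o(1)}$ with $f(\delta)=(1-r(d,\alpha,\delta))/\delta$ is a clean bookkeeping device that reproduces the paper's exponent table correctly (I checked the algebra in all cases). The unimodality bound $P^\omega_0(X_t=n\mathbf{e}_1)\le P^\omega_0(X_t=0)$ for $d\le 2$ and the use of Proposition~\ref{rem:LCLT} for the sharp constant are small stylistic variants of what the paper does. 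Your lower-bound argument (restrict to a multiplicative time window and force $A^2(t)\gtrsim n^2$ via Proposition~\ref{prop:RWRSlower}) also matches the paper in spirit.

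There are, however, two concrete gaps in how you propose to control the leftover ranges, and one place where you over-anticipate an obstacle.

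First, in the regime $d\ge 2$, $\alpha>\frac{d}{2}$ you need $\int_{n^\gamma}^{n^2/R^2}P^\omega_0(X_t=n\mathbf{e}_1)\,\mathrm{d}t=o_R(1)\,n^{1-d}$ (uniformly for $n$ large), but you propose to bound it by the stretched-exponential moderate-deviation estimate of \cite{DF18}, ``whose rate is bounded below once $\delta_t$ stays away from $\tfrac12$.'' For $t$ near $n^2/R^2$ with $R$ fixed, $\delta_t=\log n/\log t\to\tfrac12$ as $n\to\infty$, so the rate does \emph{not} stay away from $\tfrac12$, and the cited bound degenerates. What actually works is to use the Gaussian tail of the one-dimensional heat kernel: on the event $A(t)\le 2\E[z(0)]t$ (which has probability $1-$ stretched-exp by Theorem~\ref{LDP}) and $t<n^2/R^2$, Lemma~\ref{lem:RWHK} gives $p^{(1)}_{A(t)}(0,n)\le C A(t)^{-1/2}e^{-c R^2}$, so the leftover integral picks up a factor $e^{-cR^2}$ directly. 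The paper sidesteps this whole issue by conditioning on $A(t)/t\in[\E z(0)-\epsilon,\E z(0)+\epsilon]$ over the entire integral, rather than splitting at $n^2/R^2$.

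Second, you flag the borderline $\delta_t\to\tfrac{d}{4\alpha}$ (for $d\ge 5$, $\alpha<\tfrac{d}{4}$) as ``the real obstacle'' requiring one to ``extract the borderline case from the proofs of Theorems~\ref{thm:RCM-power} and~\ref{thm:on-diag}.'' This turns out to be unnecessary: the paper removes the initial interval $[0,n^{4\alpha/d-2\epsilon}]$ by a one-line argument,
\begin{equation}
P^\omega_0(X_t=n\mathbf{e}_1)\le P_0\bigl(A(n^{4\alpha/d-2\epsilon})\ge n^{2-\epsilon}\bigr)+\sup_{s\le n^{2-\epsilon}}p_s(0,n),
\end{equation}
the first term being stretched-exponentially small by Theorem~\ref{RWRS} (since $n^{2-\epsilon}=(n^{4\alpha/d-2\epsilon})^{\rho_\epsilon}$ with $\rho_\epsilon>\tfrac{d}{2\alpha}$) and the second by Lemma~\ref{lem:RWHK}. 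On the remaining range, the needed coverage of $\delta_t$ slightly beyond $\tfrac{d}{4\alpha}$ is exactly what Remark~\ref{rem:coverage} supplies, so no new ``Lipschitz lower bound on the deviation rate'' from \cite{DF18} is required. Your instinct that something needs to be said at this scale is right, but the paper's route is much lighter than what you envisage.
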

\begin{remark}
\begin{enumerate}[leftmargin=20pt]
 \item Similarly to Proposition~\ref{rem:LCLT}, the right-hand side of~\eqref{eq:standard-green} coincides with the asymptotics of the Green function of the random walk under $P^{\E[\omega]}$. Moreover, except for the case $d \ge 5$ and $\alpha <\frac{d}{4}$, we have $g^\omega(0,n\mathbf{e}_1)=\bar{d}(0,n\mathbf{e}_1)^{2-d_{\text{s}}+o(1)}$ as $n\to\infty$ by using the distance in~\eqref{eq:intrinsic} and the spectral dimension in~\eqref{eq:spec-dim}. 
 \item In the case $d \ge 5$ and $\alpha < \frac{d}{4}$, the Green function in the direction $\mathbf{e}_1$ decays slower than $\bar{d}(0,n\mathbf{e}_1)^{2-d_{\text{s}}+o(1)}$ as $n\to\infty$. In particular for $\alpha\in(1,\frac{d}{4})$, the exponent is strictly smaller than $1-d$ although the quenched functional central limit theorem holds. This looks strange but there is no contradiction since this slow decay is specific to the $\mathbf{e}_1$-direction. We elaborate more on this singularity in direction in Remark~\ref{rem:Green-off}. 
\end{enumerate}
\end{remark}

Let us compare Theorems~\ref{thm:on-diag} and~\ref{thm:Green} with recent results for more general ergodic random conductance models. The quenched local central limit theorem is proved in~\cite[Theorem~1.11, Remarks~1.5 and~1.12]{ADS16b} under a moment condition which in our case reads as
\begin{equation}
 \E[z(0)^p]<\infty\text{ for some }p>\tfrac{1+d}{2}.
\label{eq:ADS}
\end{equation}
The condition in~\cite{ADS16b} involves a negative moment of random conductance which controls a certain trapping effect. Here we do not need it since the conductance is one except for the first coordinate direction and hence no trapping can occur. 
Proposition~\ref{rem:LCLT} shows that~\eqref{eq:ADS} can be relaxed to $p>\frac{d}{2}$ in our special model.
More recently in~\cite{BS19}, the invariance principle and the elliptic Harnack inequality have been proved under a slightly weaker condition where $\tfrac{1+d}{2}$ is replaced by $\tfrac{d}{2}$. Combining them, one can deduce the convergence of the Green function as in the last part of Theorem~\ref{thm:Green}. Here our result requires exactly the same moment condition. In addition, we show in Remark~\ref{rem:Green-off} that if $\alpha < \frac{d}{2}$ in our model, which implies $\E[z(0)^p]=\infty$ for some $p<\frac{d}{2}$, then the elliptic Harnack inequality fails to hold.  

Let us briefly explain how we use the representation~\eqref{representationRWRS} in the proof. For example, the on-diagonal heat kernel in Theorem~\ref{thm:on-diag} can be related to a negative moment of $A^2(t)$ as follows:
\begin{equation}
\begin{split}
P^\omega_0(X_t=0)
&=P_0\left(S^1_{A^2(t)}= S^2_t=0\right)\\
&=E_0\left[p_{A^2(t)}(0,0); S^2_t=0\right]\\
&= ({(4\pi)^{-1/2}}+o(1))E_0\left[A(t)^{-{1\over 2}}; S_t=0\right]
\end{split}
\label{HK-RWRS}
\end{equation}
as $t\to\infty$. We dropped the superscript in the last formula since it involves only the second random walk $S^2$. If we formally substitute~\eqref{eq:scaling}, that is, 
\begin{equation}
A(t)=t^{s(d,\alpha)+o(1)} \textrm{ with }s(d,\alpha)=
\begin{cases}
{\alpha+1 \over 2\alpha}\vee 1 &\textrm{if }d=1,\\
{1 \over \alpha}\vee 1& \textrm{if }d\ge 2
\end{cases}
\end{equation}
into~\eqref{HK-RWRS}, we obtain Theorem~\ref{thm:on-diag}. Thus our task is to justify this substitution by controlling the upper and lower deviations of $A(t)$ away from the above scaling for $\P$-almost every $z$. In view of~\eqref{HK-RWRS}, the lower and upper bound for $P_0^\omega(X_t=0)$ are related to the upper and lower tail estimate for the random walk in random scenery, respectively. 

Before closing this introduction, we mention two variants of our model. The first is a \emph{multidimensional layer} model which can be covered by our method. 
\begin{remark}
A similar model on $\Z^{d_1+d_2}$ can be defined by setting the conductance around $x=(x_1,x_2)\in \Z^{d_1+d_2}$ to be 
\begin{equation}
  \omega(x,x\pm \mathbf{e}_i)= 
 \begin{cases}
  z(x_2),& i\le d_1,\\
  1,& i> d_1.
 \end{cases}
\label{eq:multidim}
\end{equation}
All the above results have extensions to this setting and we list them below, though we give proofs only in the simplest $\Z^{1+d}$ case for brevity. In the following list, we assume $d_1\ge 2$ and state the results with $o(1)$ in the exponents which is not always necessary. 

\emph{On-diagonal estimate}: For any $d_1\ge 2$, $\P$-almost surely, 
\begin{equation}
P^\omega_0(X_t=0)=t^{-\frac{d_1}{2}s(d_2,\alpha)-\frac{d_2}{2}+o(1)} 
\end{equation}
as $t\to\infty$, which implies that
\begin{equation}
d_{\text s}=d_1s(d_2,\alpha)+d_2\;
\begin{cases}
>d_1+d_2, &\alpha<1\\
=d_1+d_2, &\alpha\ge 1.
\end{cases} 
\end{equation} 
In particular, the random walk is always transient in this case.

\emph{Off-diagonal estimate}:
For any $d_2\ge 3$, $\alpha<\frac{d_2}{2}$, $\delta\in[0,\frac{d_2}{4\alpha})$ and $x_1\in\R^{d_1}\setminus\{0\}$, $\P$-almost surely, 
\begin{equation}
 P^\omega_0(X_t=t^\delta(x_1,0))=t^{-r(d_1,d_2,\alpha,\delta)+o(1)}
\end{equation}
as $t\to\infty$, where 
\begin{equation}
r(d_1,d_2,\alpha,\delta)=
\begin{cases}
d_1\left(\frac{1}{2\alpha}\vee \frac{1}2\right)+\frac{d_2}{2},& \delta\in \left[0,\frac{1}{2\alpha}\vee \frac12\right],\\[5pt]
\delta(d_1+2\alpha)-1+\frac{d_2}{2},& \delta\in \left(\frac{1}{2\alpha}\vee \frac12,\frac{d_2}{4\alpha}\right).
\end{cases} 
\end{equation}

\emph{Green function estimate}: For any $x_1\in\R^{d_1}\setminus\{0\}$, $\P$-almost surely, 
\begin{equation}
 g^\omega(0,n(x_1,0)) = 
\begin{cases}
n^{-d_1+(1\wedge\frac{2\alpha}{\alpha+1})}, &d_2=1,\\
n^{-d_1-(1\wedge \alpha\wedge \frac{4\alpha}{d_2})(d_2-2)+o(1)}, &d_2\ge 2
\end{cases}
\label{eq:Green-MD}
\end{equation}
as $n\to\infty$. 
\end{remark}
The second variant is the constant speed random walk. 
\begin{remark}
For the $\omega$ given in~\eqref{eq:multidim}, consider a continuous time Markov chain $(Y_t)_{t\ge 0}$ on $\Z^{d_1+d_2}$ with generator
\begin{equation}
\widetilde{\mathcal{L}}^\omega f(x)=\frac{1}{\omega(x)}\sum_{|e|=1}\omega(x,x+e)
(f(x+e)-f(x)), 
\end{equation} 
where $\omega(x)=\sum_{|e|=1}\omega(x,x+e)=2d_1z(x)+d_2$. This process jumps to a neighboring site with probability proportional to $\omega(x,\cdot)$ and with rate one, hence called the constant speed random walk. This can be realized as a time change of the variable speed random walk $(X_t)_{t\ge 0}$ as follows: if we define $B(t)=\int_0^t \omega(X_u)\dd u=2d_1A^2(t)+2d_2t$, then by using its right continuous inverse $B^{-1}$, we have
\begin{equation}
 Y_t \stackrel{\dd}{=} X_{B^{-1}(t)} =(S^1_{A^2(B^{-1}(t))}, S^2_{B^{-1}(t)}). 
\label{eq:CSRW}
\end{equation}
In the last expression, $A^2(B^{-1}(t))$ is comparable to $t$, up to multiplicative constant, and hence the first coordinate behaves more or less like a simple random walk on $\Z^{d_1}$ for large time. The second coordinate is a well-studied process called the Bouchaud trap model. For quenched results, we refer the reader to~\cite{BAC07} for functional limit theorems and~\cite{CHK19} for a two-sided heat kernel estimate. Though the long time behaviors in Theorems~\ref{thm:on-diag} and~\ref{thm:RCM-power} become different by the time change, the decay of Greens function in Theorem~\ref{thm:Green} is unchanged since $\tilde{g}^\omega(x,y)=g^\omega(x,y)\omega(y)$. This allows us to formally compute the spectral dimension for the constant speed random walk on $\Z^{d_1+d_2}$ as
\begin{equation}
\begin{cases}
d_1+\left(1\wedge \frac{2}{\alpha+1}\right), & d_2=1, \text{ and $\alpha<1$ if $d_1=1$},\\[5pt]
d_1+2+\bigl(1\wedge \alpha \wedge \frac{4\alpha}{d_2}\bigr)(d_2-2), & d_2\ge 2.
\end{cases}
\label{eq:d_s-CSRW}
\end{equation}
The detail is given in Remark~\ref{rem:Green-off} (iii). 
\end{remark}
\subsection{Notation convention}
In the proofs, we will use $c$ and $c'$ to denote positive constants depending only on $d$ and $\alpha$, whose values may change from line to line. We write $F(t)\sim G(t)$ as $t\to\infty$ to indicate $\lim_{t\to\infty}F(t)/G(t)= 1$, and $F(t)\asymp G(t)$ as $t\to\infty$ to indicate $c\le \liminf_{t\to\infty}F(t)/G(t)<\limsup_{t\to\infty}F(t)/G(t)<c'$. 
\section{Overview of the paper}
Since we have various results depending on the parameters, the proofs split into many cases. We summarize the organization of the rest of the paper and also explain basic ideas of the proofs.

In Section~\ref{sec:HK}, we recall standard estimates on the transition probability for the simple random walk. 

In Section~\ref{sec:power-law}, we prove Theorem~\ref{power-law}, that is, the power law decay of the random walk in random scenery. We first prove that for the random walk to achieve $A(t)\ge t^\rho$, it is almost necessary and sufficient to visit the relevant level set $\{x\in\Z^d\colon z(x)\ge t^\rho\}$. More precisely, we prove that it is too difficult to get a contribution from lower level sets of $z$. Here we need to invoke some estimates from~\cite{DF18}. The upper bound on the hitting probability to the relevant level set is shown by a rather simple argument using the asymptotics of the random walk range. The lower bound is obtained by using the so-called second moment method. 

In Section~\ref{sec:lower}, we prove the on-diagonal lower bounds in Theorem~\ref{thm:on-diag}. Essentially, we simply substitute the following results on the random walk in random scenery into~\eqref{HK-RWRS}: 
the law of large numbers for $A(t)$ when $\E[z(0)]<\infty$, Theorem~\ref{RWRS} when $d=1,2$ and $\E[z(0)]=\infty$, and Theorem~\ref{power-law} when $d\ge 3$ and $\E[z(0)]=\infty$.

In Section~\ref{sec:upper}, we prove the on-diagonal upper bounds in Theorem~\ref{thm:on-diag}. In fact they follow immediately from Proposition~\ref{prop:RWRSlower} and most of the section is devoted to the proof of it. When $\alpha\ge 1$, we use certain truncations to reduce the problem to upper and lower deviations for the random walk in bounded scenery, which are rather well-understood. For the case $\alpha<1$, it essentially amounts to proving that it is difficult to 
\begin{enumerate}
 \item reduce the local time on the level set $\{z(x)\ge t^{\frac{1}{2\alpha}}\}$ to $o(t^{1/2})$ when $d=1$, 
 \item make the random walk avoid the level set $\{z(x)\ge t^{\frac{1}{\alpha}}\}$ when $d\ge 2$.
\end{enumerate}
The argument for $d=1$ is rather bare-handed and based on the path decomposition according to the successive moves over the points in the above level set. For the case $d\ge 2$, we use the so-called method of enlargement of obstacles developed by Sznitman~\cite{Szn98}. 

Sections~\ref{sec:LCLT},~\ref{sec:RCM-power} and~\ref{sec:Green} are devoted to the proofs of Proposition~\ref{rem:LCLT}, Theorems~\ref{thm:RCM-power} and~\ref{thm:Green}, respectively. The proofs are mostly straightforward applications of the results in the earlier sections.  
\section{A bound on the continuous time random walk}
\label{sec:HK}
We frequently use the following estimate on the transition probability of the
continuous time simple random walk
$p_t(x,y)=P_x(S_t=y)$.
This can be found in~\cite[Propositions~4.2 and~4.3]{DD05}. 
\begin{lemma}
\label{lem:RWHK}
There exist positive constants $c_1$--$c_4$ such that when $t\ge 1$, 
\begin{equation}
c_1t^{-\frac{d}2}\exp\left\{-c_2\frac{|x|^2}t\right\}
\le p_t(0,x)\le
c_3t^{-\frac{d}2}\exp\left\{-c_4\frac{|x|^2}t\right\}
\end{equation}
for $|x|\le t$ and 
\begin{equation}
 \exp\left\{-c_2|x|\left(1\vee\log\frac{|x|}t\right)\right\}
\le p_t(0,x)\le
 \exp\left\{-c_4|x|\left(1\vee\log\frac{|x|}t\right)\right\}
\end{equation}
for $|x|> t$.
\end{lemma}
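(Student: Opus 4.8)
This is a classical heat-kernel estimate for the continuous-time simple random walk (see the reference quoted above, \cite{DD05}); here is a self-contained route I would take. The plan is to pass to the jump chain: writing $\kappa:=2d$ for the total jump rate, $S_t=\tilde S_{N(t)}$ with $(\tilde S_n)_{n\ge0}$ the discrete-time simple random walk (one-step law uniform on the $2d$ neighbours) and $N$ an independent rate-$\kappa$ Poisson process, so that $p_t(0,x)=\sum_{n\ge0}e^{-\kappa t}(\kappa t)^n(n!)^{-1}q_n(0,x)$ with $q_n(0,x):=P_0(\tilde S_n=x)$. This decouples the problem into classical estimates for $q_n$ and elementary Poisson bounds. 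The discrete-time facts I would invoke are: the support/parity constraint $q_n(0,x)=0$ unless $n\ge|x|_1:=\sum_i|x_i|$ and $n\equiv|x|_1\pmod2$; the shortest-path bound $q_n(0,x)\ge(2d)^{-n}$ when those hold; the standard two-sided Gaussian bound $c\,n^{-d/2}e^{-C|x|^2/n}\le q_n(0,x)\le C\,n^{-d/2}e^{-c|x|^2/n}$ valid for $|x|_1\le(1-\eta)n$ (from the local CLT plus exponential tilting of the one-step law, whose Cramér rate function $I_d$ satisfies $c|v|^2\le I_d(v)\le C|v|^2$ on its compact domain); and, globally, the sub-Gaussian bound $q_n(0,x)\le 2e^{-|x|^2/(2dn)}$ (Azuma for a coordinate projection). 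On the Poisson side I would use $P(|N(t)-\kappa t|\ge\lambda)\le 2e^{-c\lambda^2/t}$ for $\lambda\le\kappa t$, the upper tail $P(N(t)\ge m)\le e^{-\kappa t}(e\kappa t/m)^m$ for $m\ge\kappa t$, and the Stirling lower bound $P(N(t)=m)\ge c\,m^{-1/2}e^{-\kappa t}(e\kappa t/m)^m$.

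For $|x|\le t$ I would split according to whether $n$ lies in the window $W:=\{|n-\kappa t|\le\tfrac{\kappa t}{2}\}$, whose complement has Poisson mass $\le e^{-ct}$. On $W$ one has $n\asymp t$ and $|x|_1\le\sqrt d\,t$, so $q_n(0,x)\le Ct^{-d/2}e^{-c|x|^2/t}$ for all $n\in W$ (by the Gaussian bound, supplemented by the global sub-Gaussian bound for the borderline indices with $|x|_1$ close to $n$, which can occur only for $d=1$); and for $n$ near $\kappa t$, where $|x|_1/n\le\tfrac12+o(1)$ is bounded away from $1$, also $q_n(0,x)\ge ct^{-d/2}e^{-C|x|^2/t}$. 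Summing the Poisson weights --- at most $1$ over $W$ for the upper bound, at least $\asymp1$ over the sub-window near $\kappa t$ for the lower bound --- and noting that the complement of $W$ contributes at most $e^{-ct}\le Ct^{-d/2}e^{-c'|x|^2/t}$ (since $|x|^2/t\le t$), yields the first displayed inequality.

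For $|x|>t$ only indices $n\ge|x|_1>t$ contribute. For the lower bound I would force exactly $|x|_1$ jumps along one fixed shortest path to $x$, so $p_t(0,x)\ge P(N(t)=|x|_1)(2d)^{-|x|_1}$; the Stirling lower bound, treating $|x|_1\le\kappa t$ and $|x|_1>\kappa t$ separately, gives $P(N(t)=|x|_1)\ge\exp\{-c|x|_1(1\vee\log(|x|_1/t))\}$, hence $p_t(0,x)\ge\exp\{-c'|x|(1\vee\log(|x|/t))\}$ using $|x|\le|x|_1\le\sqrt d\,|x|$. For the upper bound, if $|x|_1\ge A\kappa t$ for a suitably large $A=A(d)$ --- which forces $\log(|x|/t)\ge1$ --- then $p_t(0,x)\le P(N(t)\ge|x|_1)\le\exp\{-c|x|_1\log(|x|_1/t)\}\le\exp\{-c'|x|\log(|x|/t)\}$; in the remaining band $t<|x|\le|x|_1<A\kappa t$ I would split once more at $n=A\kappa t$, using $q_n(0,x)\le2e^{-|x|^2/(2dn)}\le e^{-c|x|}$ (valid since $|x|>t$) for $n<A\kappa t$ and a crude Poisson tail bound for $n\ge A\kappa t$; on this band $1\vee\log(|x|/t)$ is bounded, so $e^{-c|x|}$ already suffices.

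The step I expect to be the main obstacle is the near-diagonal lower bound over the full range $\sqrt t\le|x|\le t$: concatenating $\Theta(|x|^2/t)$ near-diagonal segments (naive chaining) loses an extra logarithmic factor in the exponent and misses the sharp Gaussian rate, so one genuinely needs the sharp local large-deviation lower bound $q_n(0,x)\ge c\,n^{-d/2}e^{-C|x|^2/n}$, proved by tilting the step law so that the tilted walk has mean displacement $x/n$ and then applying the local CLT to it, together with the easy but essential estimate $I_d(v)\le C|v|^2$ on the whole effective domain. Everything else reduces to bookkeeping with Poisson tails and the local CLT.
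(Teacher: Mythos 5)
The paper does not prove Lemma~\ref{lem:RWHK}: it simply cites \cite[Propositions~4.2 and~4.3]{DD05}, so there is no internal proof to compare against. Your Poissonization route is a correct, self-contained alternative. Writing $p_t(0,x)=\sum_n e^{-\kappa t}(\kappa t)^n(n!)^{-1}q_n(0,x)$ with $\kappa=2d$ decouples the continuous-time estimate into discrete-time local-CLT/local-large-deviation bounds for $q_n$ on the one hand and elementary Poisson tail bounds on the other, and your case analysis --- restricting to the Poisson window $|n-\kappa t|\le\kappa t/2$ when $|x|\le t$, and when $|x|>t$ forcing $N(t)=|x|_1$ along a geodesic for the lower bound while bounding $P(N(t)\ge|x|_1)$ by Chernoff for the upper, with the intermediate band $t<|x|<A\kappa t$ handled by the Azuma/Carne--Varopoulos sub-Gaussian bound --- does reproduce both displayed inequalities with the correct $1\vee\log(|x|/t)$ crossover. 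The reference \cite{DD05} works directly in continuous time via Nash--Moser / Davies-type iteration; your route trades that analytic machinery for discrete-time local limit theory plus Stirling, which is a perfectly legitimate alternative and arguably more elementary.

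One correction to your assessment of the ``main obstacle'': chaining for the near-diagonal lower bound $p_t(0,x)\ge c\,t^{-d/2}e^{-C|x|^2/t}$ does \emph{not} lose a logarithmic factor, provided one sums over intermediate points rather than fixing a single chain. Split $[0,t]$ into $K\asymp 1\vee(|x|^2/t)$ pieces of length $s=t/K$, which is bounded below by a constant because $K\le C|x|^2/t+1\le Ct+1$ when $|x|\le t$. Use the very-near-diagonal bound $p_s(y,y')\ge c\,s^{-d/2}$ for $|y-y'|\le C\sqrt s$ (for large $s$ this is the local CLT; for $s$ of order one it is just positivity), and sum over the $\asymp s^{d/2}$ admissible choices of each of the $K-1$ intermediate points in balls of radius $\sqrt s$ centred along the segment from $0$ to $x$. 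This gives
\begin{equation}
p_t(0,x)\;\ge\;\bigl(c\,s^{-d/2}\bigr)^{K}\cdot\bigl(c\,s^{d/2}\bigr)^{K-1}\;=\;c^{2K-1}s^{-d/2}\;\ge\;c\,t^{-d/2}e^{-C'K}\;=\;c\,t^{-d/2}e^{-C''|x|^2/t},
\end{equation}
which is sharp. The ``naive'' chaining that does lose a logarithm is the one that multiplies $K$ per-step bounds along a \emph{single} fixed chain, producing a spurious $(t/K)^{-dK/2}$; the summation over intermediate points is exactly what cancels it. So you may, if you prefer, replace the exponential-tilting step in your plan by this continuous-time chaining, at the cost of only needing the near-diagonal bound for $|x|=O(\sqrt t)$. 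Either way the rest of your bookkeeping --- the parity constraint, the Stirling lower bound for $P(N(t)=|x|_1)$, the two Poisson regimes $|x|_1\gtrless\kappa t$, and the observation that on the band $t<|x|<A\kappa t$ one has $|x|^2/t>|x|$ and $1\vee\log(|x|/t)=O(1)$ --- is sound.
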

\section{Power law decay rate for random walk in random scenery}
\label{sec:power-law}
\begin{proof}[Proof of Theorem~\ref{power-law}]
We first prove the upper bounds. Let us define the level set for $\lambda>0$ by
\begin{equation}
 \calH_{\lambda}(t)=\set{x\in \Z^d \cap [-t^{1/2+\epsilon},t^{1/2+\epsilon}]^d \colon z(x)\ge {t^{\lambda}}}.
\end{equation}
We often write $\calH_{\lambda}$ instead of $\calH_{\lambda}(t)$ for simplicity and let $\sfH_{\calH_\lambda}$ denote the hitting time to $\calH_\lambda$. 
Then we can write 
\begin{equation}
 P_0\left(A(t)\ge t^{\rho}\right)
 \le P_0\left(A(t)\ge t^{\rho}, \sfH_{\calH_{{\rho-5\epsilon}}}\ge t\right)
+P_0\left(\sfH_{\calH_{{\rho-5\epsilon}}}<t\right).
\label{eq:high-low}
\end{equation}
We are going to show that the first term in~\eqref{eq:high-low} decays stretched exponentially in Lemma~\ref{lem:low} and that the second term obeys the right power law decay in Lemma~\ref{lem:highest}. 
\begin{lemma}
\label{lem:low}
Let $d\ge 3$, $\alpha<d/2$ and $\rho\in(\tfrac1\alpha\vee 1, \tfrac{d}{2\alpha}]$. For any $\epsilon\in(0,\tfrac{\rho}{5}\wedge (\tfrac{1-\alpha\rho}{3}\vee\tfrac{\rho-1}{3}))$, 
there exists $c(\epsilon)>0$ such that $\P$-almost surely, for all sufficiently large $t$, 
\begin{equation}
P_0\left(A(t)\ge t^\rho, \sfH_{\calH_{{\varrho-5\epsilon}}}\ge t\right)
\le \exp\left\{-t^{c(\epsilon)}\right\}.
\end{equation}
\end{lemma}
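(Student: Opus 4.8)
The plan is to show that if the walk never hits $\calH_{\rho-5\epsilon}(t)$ before time $t$, then the contribution of the scenery it actually sees is too small to reach level $t^\rho$, except on an event of stretched-exponentially small probability. On the event $\{\sfH_{\calH_{\rho-5\epsilon}}\ge t\}$ the walk stays in the sublevel set $\{z(x)<t^{\rho-5\epsilon}\}$ (intersected with the box $[-t^{1/2+\epsilon},t^{1/2+\epsilon}]^d$, unless it has already left that box, which costs stretched-exponential probability by Lemma~\ref{lem:RWHK}); so I would first truncate and write $A(t)\le \int_0^t z(S_u)\wedge t^{\rho-5\epsilon}\dd u$ on this event, up to the negligible event that $S$ exits the box. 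Thus it suffices to bound $P_0\bigl(A^{\le}(t)\ge t^\rho,\ S\text{ stays in the box}\bigr)$ where $A^{\le}$ is the random walk in the \emph{truncated} scenery $z\wedge t^{\rho-5\epsilon}$.

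Next I would invoke the upper-deviation machinery of~\cite{DF18} — the same estimates referred to in the overview as ``we need to invoke some estimates from~\cite{DF18}'' — applied not to $z$ but to the capped scenery $\tilde z := z\wedge t^{\rho-5\epsilon}$. The point is that $\tilde z$ still has Pareto-type upper tail up to the cutoff $r= t^{\rho-5\epsilon}$, and the quantity we want, $A^{\le}(t)\ge t^\rho = t^{5\epsilon}\cdot t^{\rho-5\epsilon}$, asks the truncated field to overshoot its maximal scale by a polynomial factor $t^{5\epsilon}$. Heuristically the optimal strategy for the capped field is to sit on a region where $\tilde z\approx t^{\rho-5\epsilon}$ for a time $\approx t^{5\epsilon}$; the cost of concentrating the local time of an $O(t^{1/2+\epsilon})$-box simple random walk on such a small favourable region for a polynomially long time is stretched-exponential in $t$. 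Concretely, I expect to reduce, via a dyadic decomposition over the level sets $\{2^{k}\le z < 2^{k+1}\}$ for $2^k\le t^{\rho-5\epsilon}$ (finitely many, $O(\log t)$, scales) and Borel–Cantelli over $t$ along a geometric subsequence, to a bound of the form: for $\P$-a.e.\ $z$ and large $t$, $P_0(A^{\le}(t)\ge t^\rho,\,S\text{ in box})\le \exp\{-t^{c(\epsilon)}\}$ with $c(\epsilon)>0$. The constraints $\epsilon<\rho/5$ and $\epsilon<\tfrac{1-\alpha\rho}{3}\vee\tfrac{\rho-1}{3}$ are exactly what is needed so that (a) the cap $\rho-5\epsilon$ is still positive and below $\rho$, and (b) the gained polynomial margin $t^{5\epsilon}$ beats the fluctuations in the number/size of high points of the capped field predicted by~\eqref{eq:scaling} applied at scale $t^{\rho-5\epsilon}$ — i.e.\ the ``$o(1)$'' errors in the $\P$-a.s.\ asymptotics for the capped random walk in random scenery are genuinely $o(t^{5\epsilon})$ in the exponent.

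The main obstacle, as I see it, is making the last step quantitative: the cited results from~\cite{DF18} are of the soft form $\exp\{-t^{p+o(1)}\}$, and here I need a \emph{uniform-in-the-cutoff} version, since the truncation level $t^{\rho-5\epsilon}$ grows with $t$. I would handle this by running the $\cite{DF18}$ upper-bound argument directly on the capped field rather than quoting it as a black box: split the capped scenery into the ``bulk'' part $z\wedge t^{\rho-5\epsilon-\eta}$ and the finitely many intermediate shells between $t^{\rho-5\epsilon-\eta}$ and $t^{\rho-5\epsilon}$; for the bulk part the existing $d\ge 3$ estimate of~\cite{DF18} gives $A^{\le}\le t^{\frac{d}{2\alpha}+o(1)}<t^{\rho}$ since $\rho<\tfrac{d}{2\alpha}$ is \emph{false} — wait, here $\rho<d/2\alpha$, so instead I use that the bulk scale is $t^{(\rho-5\epsilon-\eta)\cdot\frac{d}{2\alpha}\cdot\frac{2\alpha}{d}}$... more carefully, the range-based bound gives that time spent at scale-$2^k$ points is at most (number of such points in the range)$\times$(typical sojourn), and summing the geometric series in $k$ up to the cutoff is dominated by the top shell, whose total contribution is $\le t^{\rho-5\epsilon+o(1)}\cdot(\text{local time on }\calH_{\rho-6\epsilon})$; then one shows the local time on $\calH_{\rho-6\epsilon}$ is $\le t^{5\epsilon/2}$ off a stretched-exponential event, because visiting a set of density $t^{-\alpha(\rho-6\epsilon)+o(1)}$ (sparse, since $\alpha(\rho-6\epsilon)>\alpha>0$ and we are below $\calH_{\rho-5\epsilon}$) for that long by an $O(t^{1/2+\epsilon})$-range walk is expensive — this is a standard Donsker–Varadhan / ``survival among sparse targets'' estimate, or can be lifted verbatim from the local-time bounds already developed in~\cite{DF18}. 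Assembling these gives $A(t)\le t^{\rho-5\epsilon+5\epsilon/2+o(1)} = t^{\rho-5\epsilon/2+o(1)}<t^\rho$ on the complement of a $\exp\{-t^{c(\epsilon)}\}$ event, which is the claim.
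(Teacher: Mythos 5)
Your proposal shares the paper's broad outline---truncate the scenery at level $t^{\rho-5\epsilon}$ (modulo the negligible event that the walk exits the box), decompose the contribution by level sets, and appeal to the local-time estimates already developed in~\cite{DF18}---but the execution has a genuine gap in the middle of the level-set range.

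The critical step the paper takes, which your proposal does not: starting from
\[
P_0\bigl(A(t)\ge t^\rho,\ \sfH_{\calH_{\rho-5\epsilon}}\ge t\bigr)
\le \sum_{k=0}^{\lfloor \rho/\epsilon\rfloor-4}
P_0\Bigl(\ell_t\bigl(\calH_{k\epsilon}\setminus\calH_{(k+1)\epsilon}\bigr)\ge \tfrac{1}{\lfloor\rho/\epsilon\rfloor}t^{\rho-(k+1)\epsilon}\Bigr)
+ P_0\Bigl(\max_{u\le t}|S_u|\ge t^{\frac12+\epsilon}\Bigr),
\]
one must bound \emph{each} summand, not just the top shell. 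Summands with small $k$ (those with $\rho-(k+1)\epsilon>1$) vanish automatically because $\ell_t$ has total mass $t$. For the remaining $k$, up to $k=\lfloor\rho/\epsilon\rfloor-4$, the paper verifies the hypothesis of~\cite[Lemma~2]{DF18} (the condition $\eta/\alpha<k\epsilon$ with $\eta=1-\rho+(k+3)\epsilon$, which is where the constraint $\epsilon<\tfrac{1-\alpha\rho}{3}\vee\tfrac{\rho-1}{3}$ enters), and then invokes~\cite[eq.~(74)]{DF18} to show that shell-$k$'s local time exceeds its quota only with probability $\exp\{-ct^{\rho-(k+3)\epsilon}\}$. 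Your proposal instead asserts that ``summing the geometric series in $k$ up to the cutoff is dominated by the top shell'' and then estimates only the contribution of $\calH_{\rho-6\epsilon}$; this domination claim is not valid uniformly in $(\alpha,\rho)$ (indeed for $\alpha\ge 1$ the contribution profile is decreasing in $k$, not increasing, and even for $\alpha<1$ your heuristic bound $t\cdot 2^{k(1-\alpha)}$ is a typical-value estimate, not a bound that controls the deviation event). The issue is that you must control the \emph{unlikely} event that some middle shell's local time is anomalously large, and this requires a quantitative bound for every $k$ in the upper range, exactly what \cite[Lemma~2]{DF18} supplies. Your proposal also flags, honestly, that you were unsure how to handle the bulk part; in the paper this uncertainty is resolved precisely by the ``total mass $\le t$'' observation that disposes of $k<(\rho-1)/\epsilon-1$ with no probabilistic estimate needed. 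So the strategy is right but the proposal does not close the argument; the ``top shell dominates'' shortcut and the Donsker--Varadhan heuristic leave the middle shells uncontrolled, and the two $\epsilon$-constraints in the lemma are needed to verify the hypothesis of the sparse-local-time lemma, not (as you suggest) merely to keep the cap positive and beat an $o(1)$ error.
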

\begin{proof}
We use some estimates from~\cite{DF18}. 
Let $\ell_t(\cdot)=\int_0^t 1_{S_u}(\cdot){\dd}u$ be the occupation time measure for the simple random walk. 
Then we have an obvious bound (cf.~\cite[eq.~(49)]{DF18})
\begin{equation}
\begin{split}
 P_0\left(A(t)\ge t^\rho, \sfH_{\calH_{{\rho-5\epsilon}}}\ge t\right)
 &\le \sum_{k=0}^{\lfloor \rho/\epsilon \rfloor-4}P_0\left(\ell_t(\calH_{{k\epsilon}}\setminus \calH_{{(k+1)\epsilon}})\ge \frac{1}{\lfloor \rho/\epsilon \rfloor}t^{\rho-(k+1)\epsilon}\right)\\
&\quad +  P_0\left(\max_{0\le u \le t} |S_u|\ge t^{{1 \over 2}+\epsilon}\right).
\end{split}
\label{eq:level-decomp}
\end{equation}
The second term on the right-hand side is easily seen to decay stretched exponentially in $t$ by the reflection principle and Lemma~\ref{lem:RWHK}. For the first term on the right-hand side, we can drop the summands with $k<(\rho-1)/\epsilon-1$ since the total mass of $\ell_t$ is $t$. In order to control the other summands, we introduce $\eta=1-\rho+(k+3)\epsilon$. Then we can verify that the condition $\eta/\alpha<k\epsilon$ in~\cite[Lemma~2]{DF18} holds for all $k\le \lfloor \rho/\epsilon \rfloor-4$ and $\epsilon\le (1-\alpha\rho)/3$ as follows: 
\begin{itemize}
 \item when $\alpha<1$, this is equivalent to $k\epsilon<(\rho-1-3\epsilon)/(1-\alpha)$, which holds true for the largest $k=\lfloor \rho/\epsilon \rfloor-4$; 
 \item when $\alpha\ge 1$, this is equivalent to $\rho>1+(1-\alpha)k\epsilon+3\epsilon$ and it holds for any $k\in \N$ under our assumption $\epsilon<\frac{\varrho-1}{3}$. 
\end{itemize}
Once the condition of~\cite[Lemma~2]{DF18} is verified, we can follow the same argument as in~\cite[eq.~(74)]{DF18} to show that 
\begin{equation}
 P_0\left(\ell_t(\calH_{{k\epsilon}}\setminus \calH_{{(k+1)\epsilon}})\ge \frac{1}{\lfloor \rho/\epsilon \rfloor}t^{\rho-(k+1)\epsilon}\right)
\le \exp\left\{-ct^{\rho-(k+3)\epsilon}\right\}
\end{equation}
for $\epsilon\le (1-\alpha\rho)/3\vee (\rho-1)/3$. Since this decays stretched exponentially in $t$ for all $k\le \lfloor \rho/\epsilon \rfloor-4$, we are done.
\end{proof}
We can prove the following version by almost the same argument.
\begin{lemma}
\label{lem:low2}
Let $d\ge 3$, $1<\alpha<d/2$ and $c>\E[z(0)]$. For any $\epsilon\in(0,\frac{\alpha-1}{20(\alpha+2)})$, there exists $c(\epsilon)>0$ such that $\P$-almost surely, for all sufficiently large $t$, 
\begin{equation}
P_0\left(A(t)\ge ct, \sfH_{\calH_{{1-5\epsilon}}}\ge t\right)
\le \exp\left\{-t^{c(\epsilon)}\right\}.
\end{equation}
\end{lemma}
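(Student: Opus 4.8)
\textbf{Plan of proof of Lemma~\ref{lem:low2}.}
The idea is to mirror the decomposition used in the proof of Lemma~\ref{lem:low}, replacing the target level $t^\rho$ by $ct$ and the associated relevant level set by $\calH_{1-5\epsilon}(t)$. Since $ct\le t^{1+o(1)}$, achieving $A(t)\ge ct$ without ever hitting $\calH_{1-5\epsilon}$ means the walk accumulates its scenery mass entirely on the union of the lower level sets $\calH_{k\epsilon}\setminus\calH_{(k+1)\epsilon}$, $k=0,\dots,\lfloor 1/\epsilon\rfloor-4$, while staying inside the box $[-t^{1/2+\epsilon},t^{1/2+\epsilon}]^d$ (the excursions outside contributing only a stretched-exponentially small probability via the reflection principle and Lemma~\ref{lem:RWHK}). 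So first I would write, in analogy with~\eqref{eq:level-decomp},
\begin{equation}
P_0\left(A(t)\ge ct,\ \sfH_{\calH_{1-5\epsilon}}\ge t\right)
\le \sum_{k=0}^{\lfloor 1/\epsilon\rfloor-4}
P_0\left(\ell_t(\calH_{k\epsilon}\setminus\calH_{(k+1)\epsilon})\ge \tfrac{c}{\lfloor 1/\epsilon\rfloor}\,t^{1-(k+1)\epsilon}\right)
+P_0\left(\max_{0\le u\le t}|S_u|\ge t^{1/2+\epsilon}\right),
\end{equation}
using that on the event in question $A(t)\le \sum_k t^{(k+1)\epsilon}\ell_t(\calH_{k\epsilon}\setminus\calH_{(k+1)\epsilon})$ plus the contribution of sites with $z<t^{\epsilon}$, which is at most $t^{\epsilon}\cdot t = t^{1+\epsilon}$ — wait, that is already larger than $ct$, so one must be a little more careful: the sites with $z\le 1$ (equivalently $k=0$) contribute at most $t$, so one should instead split off the $k=0$ term and note that on $\{A(t)\ge ct\}$ with $c>1$ at least $(c-1)t$ of the mass comes from the sets with $k\ge 1$. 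I would therefore drop the low-$k$ summands exactly as in Lemma~\ref{lem:low} — here one keeps only $k\ge \lceil 1/\epsilon\cdot 0\rceil$, i.e. effectively all $k$, since $1-(k+1)\epsilon$ can be as small as $4\epsilon-\epsilon>0$.

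The heart of the matter is then to apply~\cite[Lemma~2]{DF18} to each retained summand, which requires verifying its hypothesis $\eta/\alpha<k\epsilon$ with the appropriate choice of $\eta$. Setting $\eta=1-1+(k+3)\epsilon=(k+3)\epsilon$ (the analogue of $\eta=1-\rho+(k+3)\epsilon$ with $\rho$ replaced by the exponent $1$ of the target $ct$), the condition $\eta/\alpha<k\epsilon$ reads $(k+3)\epsilon/\alpha<k\epsilon$, i.e. $k+3<\alpha k$, i.e. $k>3/(\alpha-1)$. This fails for small $k$, so those small-$k$ terms cannot be handled by~\cite[Lemma~2]{DF18} directly; but for small $k$ the level is $z\asymp t^{k\epsilon}$ with $k\epsilon$ tiny, so the scenery there is essentially bounded (polynomially small power of $t$), and the required amount of local time $t^{1-(k+1)\epsilon}$ is of order $t^{1-o(1)}$, i.e. almost all of the total time $t$ — such a concentration of the occupation measure on the thin, bounded-value set $\calH_{k\epsilon}\setminus\calH_{(k+1)\epsilon}$ is itself extremely unlikely and can be bounded stretched-exponentially by a direct argument (e.g. the density of that set is at most $t^{-\alpha k\epsilon+o(1)}$, so forcing the walk to spend a $1-o(1)$ fraction of its time there in a box of volume $t^{(1/2+\epsilon)d}$ costs a stretched-exponential price by standard large-deviation/eigenvalue estimates for the occupation time). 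This is the same phenomenon exploited throughout~\cite{DF18} and I expect it to go through with the bound $\epsilon<\frac{\alpha-1}{20(\alpha+2)}$, which is what makes all the exponents $1-(k+3)\epsilon$, $k\epsilon-\eta/\alpha$ etc. strictly positive and bounded away from $0$ uniformly in the relevant range of $k$.

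For the retained large-$k$ terms, once $k>3/(\alpha-1)$ the hypothesis of~\cite[Lemma~2]{DF18} holds, and following the computation leading to~\cite[eq.~(74)]{DF18} verbatim (with $\rho$ replaced by $1$) gives
\begin{equation}
P_0\left(\ell_t(\calH_{k\epsilon}\setminus\calH_{(k+1)\epsilon})\ge \tfrac{c}{\lfloor 1/\epsilon\rfloor}\,t^{1-(k+1)\epsilon}\right)
\le \exp\left\{-c't^{1-(k+3)\epsilon}\right\},
\end{equation}
and $1-(k+3)\epsilon\ge 1-\epsilon/\epsilon\cdot\epsilon$... more precisely $1-(k+3)\epsilon\ge 1-(\lfloor1/\epsilon\rfloor-1)\epsilon\ge \epsilon$, so each term decays at least like $\exp\{-c't^\epsilon\}$. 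Summing the $O(1/\epsilon)$ terms and adding the stretched-exponential boundary-crossing term yields the claim with, say, $c(\epsilon)=\epsilon/2$.

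\textbf{Main obstacle.} The delicate point is the bookkeeping of which $k$ can be handled by~\cite[Lemma~2]{DF18} and which require the separate ``too much local time on a sparse bounded set'' argument, together with checking that the constant $\frac{\alpha-1}{20(\alpha+2)}$ is indeed small enough to make every exponent that appears strictly positive and uniformly bounded below over the whole range $0\le k\le\lfloor1/\epsilon\rfloor-4$; this is the step where the somewhat arbitrary-looking numerical constant gets pinned down. Everything else is a routine transcription of the proof of Lemma~\ref{lem:low} with $\rho\rightsquigarrow 1$.
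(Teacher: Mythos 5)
Your decomposition has a genuine gap in the treatment of the small-$k$ summands, and the patch you propose for them does not close it. Writing, for every $k\ge 0$, the bound
\begin{equation*}
P_0\left(\ell_t(\calH_{k\epsilon}\setminus\calH_{(k+1)\epsilon})\ge \tfrac{c}{\lfloor 1/\epsilon\rfloor}\,t^{1-(k+1)\epsilon}\right),
\end{equation*}
does not work: for $d\ge 3$ the walk visits about $t$ sites in time $t$, so on a set of density $\asymp t^{-\alpha k\epsilon+o(1)}$ inside the box the \emph{typical} occupation time is already $\asymp t^{1-\alpha k\epsilon+o(1)}$, while the threshold you demand is $\asymp t^{1-(k+1)\epsilon}$. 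The event is a deviation only if $(k+1)\epsilon>\alpha k\epsilon$, i.e.\ $k>1/(\alpha-1)$. For $k\le 1/(\alpha-1)$ (and in particular $k=0$, where $\calH_0\setminus\calH_\epsilon$ can have density of order one) the probability you want to be stretched-exponentially small is in fact bounded away from zero, so the ``density/eigenvalue'' bound you invoke cannot hold. The ad hoc escape of assuming $c>1$ is also unjustified: the hypothesis is $c>\E[z(0)]$, and the tail condition~\eqref{ass-tail} says nothing about the distribution near zero, so $\E[z(0)]$ could be arbitrarily small. Finally, the ``drop the low-$k$ summands as in Lemma~\ref{lem:low}'' step was valid there only because $t^{\rho-(k+1)\epsilon}>t$ for $k<(\rho-1)/\epsilon-1$; with $\rho=1$ that never happens, so nothing can be dropped on mass grounds.

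The paper resolves exactly this difficulty by \emph{not} slicing the low levels term by term. It introduces an intermediate $c'\in(\E[z(0)],c)$ and lets $l$ be the smallest integer exceeding $3/(\alpha-1)$ (your threshold computation for when \cite[Lemma~2]{DF18} becomes applicable is correct). The decomposition~\eqref{slicing2} then separately bounds
\begin{equation*}
P_0\left(\int_0^t z(S_u)\wedge t^{l\epsilon}\,\dd u \ge c't\right)
\quad\text{and}\quad
\sum_{k\ge l}P_0\left(\ell_t(\calH_{k\epsilon}\setminus\calH_{(k+1)\epsilon})\ge \tfrac{c-c'}{\lfloor 1/\epsilon\rfloor}\,t^{1-(k+1)\epsilon}\right).
\end{equation*}
The second sum is handled as in Lemma~\ref{lem:low} via \cite[eq.~(75)]{DF18}, exactly as you sketch for large $k$. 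The first term is the key new ingredient: because the truncated scenery $z\wedge t^{l\epsilon}$ has mean at most $\E[z(0)]<c'$, this is an upper deviation of a (nearly) bounded random walk in random scenery from its law of large numbers, handled by \cite[eq.~(76)]{DF18}; the numerical restriction $\epsilon<\frac{\alpha-1}{20(\alpha+2)}$ is precisely what guarantees $l\epsilon<\frac{1}{20}$, which that estimate requires. So the constant is pinned down not by keeping all the sliced exponents positive, as you conjecture, but by making the truncation level small enough for the law-of-large-numbers bound to apply.
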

\begin{proof}
We mostly repeat the proof of Lemma~\ref{lem:low} with $\rho$ replaced by 1. The only difference is that the summands in~\eqref{eq:level-decomp} are positive for all $k\ge 0$. We instead introduce $c'\in (\E[z(0)],c)$ and write
\begin{equation}
\begin{split}
  P_0\left(A(t)\ge ct\right)
&\le  P_0\left(\int_0^t z(S_u)\wedge t^{l\epsilon}{\dd}u
 \ge c't\right)\\
&\quad+\sum_{k=l}^{\lfloor 1/\epsilon \rfloor-4}
 P_0\left(\ell_t(\mathcal{H}_k\setminus \mathcal{H}_{k+1})
 \ge \frac{c-c'}{\lfloor 1/\epsilon \rfloor}t^{1-(k+1)\epsilon}\right)\\
&\quad +P_0\left(\max_{0\le u\le t}|S_u| > t^{\frac12+\epsilon}\right).
 \label{slicing2}
\end{split}
\end{equation}
The last term decays stretched exponentially just as before. 
Next, if we let $l$ be the smallest integer larger than $\frac{3}{\alpha-1}$, then for any $k\ge l$, one can check that the probability 
\begin{equation}
P_0\left(\ell_t(\calH_{{k\epsilon}}\setminus \calH_{{(k+1)\epsilon}})\ge \frac{c-c'}{\lfloor 1/\epsilon \rfloor}t^{1-(k+1)\epsilon}\right) 
\end{equation}
decays stretched exponentially (see~\cite[eq.~(75)]{DF18} and the following discussion therein). Finally, note that our assumption on $\epsilon$ and the choice of $l$ imply that $l\epsilon<\frac{1}{20}$. Then~\cite[eq.~(76)]{DF18} shows that the first term in~\eqref{slicing2} decays stretched exponentially. 
\end{proof}
\begin{lemma}
\label{lem:highest}
Let $d\ge 3$ and $\rho\in (\tfrac{1}{\alpha}\vee 1, \tfrac{d}{2\alpha}]$. Then $\P$-almost surely, 
\begin{equation}
P_0\left(\sfH_{\calH_{{\rho-5\epsilon}}}<t\right) 
\le t^{-\alpha\rho+1+6\alpha\epsilon+o(1)} \textrm{ as }t\to\infty. 
\end{equation}
for all sufficiently small $\epsilon>0$. When $\alpha>1$, this bound remains valid for $\rho=1$. 
\end{lemma}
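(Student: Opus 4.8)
The plan is to bound the hitting probability $P_0(\sfH_{\calH_{\rho-5\epsilon}}<t)$ by controlling two things: the size of the level set $\calH_{\rho-5\epsilon}(t)$ inside the box $[-t^{1/2+\epsilon},t^{1/2+\epsilon}]^d$, and the probability that the random walk, in time $t$, ever visits a fixed point of that set. First I would note that, by the tail assumption~\eqref{ass-tail} and a Borel--Cantelli argument over dyadic scales, $\P$-almost surely the highest value of $z$ in the box $[-t^{1/2+\epsilon},t^{1/2+\epsilon}]^d$ is $t^{(d/2+d\epsilon)/\alpha+o(1)}=t^{d/(2\alpha)+o(1)}$ (using $d\epsilon/\alpha=o(1)$), so for $\rho\le d/(2\alpha)$ the set $\calH_{\rho-5\epsilon}(t)$ is nonempty for large $t$; more quantitatively, again by~\eqref{ass-tail} and concentration of a sum of i.i.d.\ indicators, $\P$-almost surely
\[
|\calH_{\rho-5\epsilon}(t)|=t^{d(1/2+\epsilon)\cdot\frac{?}{}}\cdots
\]
— more precisely $\E|\calH_{\rho-5\epsilon}(t)|=(2t^{1/2+\epsilon}+1)^d\,t^{-\alpha(\rho-5\epsilon)+o(1)}=t^{d/2+d\epsilon-\alpha\rho+5\alpha\epsilon+o(1)}$, and since $\rho<d/(2\alpha)$ this expectation is a positive power of $t$, so by a standard concentration/Borel--Cantelli argument $|\calH_{\rho-5\epsilon}(t)|\le t^{d/2+d\epsilon-\alpha\rho+5\alpha\epsilon+o(1)}$ for all large $t$, $\P$-a.s. (The case $\rho=d/(2\alpha)$, where the expected cardinality is $t^{o(1)}$, needs a slightly more careful almost-sure upper bound, e.g.\ $|\calH_{\rho-5\epsilon}(t)|\le t^{o(1)}$ via a first-moment bound along a dense sequence of $t$; but here $\rho<d/(2\alpha)$ is assumed except in the $\alpha>1$, $\rho=1$ addendum, handled separately below.)

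Next I would bound the visit probability. By a union bound over $y\in\calH_{\rho-5\epsilon}(t)$,
\[
P_0\left(\sfH_{\calH_{\rho-5\epsilon}}<t\right)
\le\sum_{y\in\calH_{\rho-5\epsilon}(t)}P_0\left(\sfH_{\{y\}}<t\right)
\le|\calH_{\rho-5\epsilon}(t)|\cdot\max_{|y|\le c\,t^{1/2+\epsilon}}P_0\left(\sfH_{\{y\}}<t\right).
\]
For a single site $y$ with $|y|\le t^{1/2+\epsilon}$, the probability that the continuous-time simple random walk ever hits $y$ before time $t$ is, for $d\ge 3$, of order $t^{-(d-2)/2+o(1)}$ (for $d\ge3$ transience gives that the total number of visits to $y$ has a geometric-type tail with mean $\asymp|y|^{2-d}$, and truncating at time $t$ with $|y|\le t^{1/2+\epsilon}$ gives $P_0(\sfH_{\{y\}}<t)\asymp t^{-(d-2)/2+o(1)}$; this can be read off from integrating the heat-kernel bounds in Lemma~\ref{lem:RWHK} and using $\int_0^t p_s(0,y)\,ds\asymp |y|^{2-d}\wedge t^{1-d/2}\,$, or cited from a standard reference). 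Multiplying,
\[
P_0\left(\sfH_{\calH_{\rho-5\epsilon}}<t\right)
\le t^{d/2+d\epsilon-\alpha\rho+5\alpha\epsilon+o(1)}\cdot t^{-(d-2)/2+o(1)}
= t^{1-\alpha\rho+(d+5\alpha)\epsilon+o(1)},
\]
which is $\le t^{-\alpha\rho+1+6\alpha\epsilon+o(1)}$ once $\epsilon$ is small enough that $d\epsilon\le\alpha\epsilon$... — rather, one should simply choose the cruder bookkeeping: absorbing $d\epsilon$ into the $o(1)$ is not legitimate since it is linear in $\epsilon$, so instead I would track it honestly and note $(d+5\alpha)\epsilon\le 6\alpha\epsilon$ fails in general; the cleaner route is to use the box $[-t^{1/2+\epsilon},t^{1/2+\epsilon}]^d$ only for the scenery maximum and to observe that for the cardinality bound one may equally use the smaller relevant box. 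In any case the exponent one gets is $1-\alpha\rho+C\epsilon+o(1)$ for an explicit constant $C=C(d,\alpha)$, and after possibly renaming $\epsilon$ (replacing $\epsilon$ by $\epsilon/C$ times a constant, or simply stating the result with $6\alpha\epsilon$ replaced by $C\epsilon$ — the paper's $6\alpha\epsilon$ presumably comes from their precise choice of box exponent $1/2+\epsilon$ and the $5\epsilon$ shift, giving $5\alpha\epsilon$ from the shift plus $\alpha\epsilon$-type terms from the box volume), the claim follows. The important point is only that the $\epsilon$-dependent correction is linear in $\epsilon$ and vanishes as $\epsilon\to0$, so that combined with Lemma~\ref{lem:low} and~\eqref{eq:high-low} one recovers the sharp exponent $-\alpha\rho+1+o(1)$ in Theorem~\ref{power-law} after sending $\epsilon\downarrow0$.

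For the addendum (the case $\alpha>1$, $\rho=1$, with $t^\rho$ replaced by $ct$): here $\calH_{1-5\epsilon}(t)=\{x\in\Z^d\cap[-t^{1/2+\epsilon},t^{1/2+\epsilon}]^d:z(x)\ge t^{1-5\epsilon}\}$ has expected cardinality $t^{d/2+d\epsilon-\alpha(1-5\epsilon)+o(1)}=t^{d/2-\alpha+(d+5\alpha)\epsilon+o(1)}$, which is still a positive power of $t$ since $\alpha<d/2$; the same union bound and single-site visit estimate give $P_0(\sfH_{\calH_{1-5\epsilon}}<t)\le t^{d/2-\alpha+(d+5\alpha)\epsilon}\cdot t^{-(d-2)/2+o(1)}=t^{1-\alpha+(d+5\alpha)\epsilon+o(1)}$, matching the asserted bound at $\rho=1$ up to the $\epsilon$-correction. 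The only subtlety is the almost-sure upper bound on $|\calH_{1-5\epsilon}(t)|$ via concentration, which works exactly as before because the expected cardinality is polynomially large in $t$; if $\alpha$ were $\ge d/2$ this would fail, consistent with the hypothesis $\alpha<d/2$.

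The main obstacle I anticipate is not any single hard estimate but the bookkeeping of the $\epsilon$-dependent exponents: one must make sure the linear-in-$\epsilon$ corrections from (a) the box volume $t^{d(1/2+\epsilon)}$, (b) the level shift $\rho-5\epsilon$ contributing $5\alpha\epsilon$, and (c) the almost-sure concentration loss, all assemble into something of the form $C\epsilon+o(1)$ with the stated constant, and that the almost-sure statement (valid for \emph{all} large $t$, not just along a subsequence) genuinely holds — this requires the concentration of $|\calH_{\rho-5\epsilon}(t)|$ around its mean to be strong enough (e.g.\ an exponential/Chernoff bound on a sum of independent indicators) to survive a Borel--Cantelli over a polynomially-growing sequence of times $t_n$, together with monotonicity of $\calH_\lambda(t)$-type quantities in $t$ to interpolate between $t_n$ and $t_{n+1}$.
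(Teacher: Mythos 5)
Your overall plan---control $|\calH_{\rho-5\epsilon}(t)|$ almost surely, then union-bound the hitting probability over the level set---is a reasonable first-moment strategy, and it \emph{can} be made to work, but the step where you estimate the single-site hitting probability is wrong and this breaks the proof as written. For $d\ge 3$ one has $P_0(\sfH_{\{y\}}<t)\asymp |y|^{2-d}$ uniformly over $1\le |y|\le t^{1/2}$ (by $\int_0^t p_s(0,y)\,ds\asymp |y|^{2-d}$ and $P_0(\sfH_y<t)\le \int_0^t p_s(0,y)ds / G(y,y)$), \emph{not} $\asymp t^{-(d-2)/2}$. You take the maximum over $|y|\le c\,t^{1/2+\epsilon}$ and plug in the value at the \emph{farthest} $y$; but the maximum of $|y|^{2-d}$ is attained at the \emph{nearest} point of $\calH_{\rho-5\epsilon}$, which is typically at distance $\asymp t^{\alpha\rho/d}$, giving a hitting probability $\asymp t^{-\alpha\rho(d-2)/d}\gg t^{-(d-2)/2}$ whenever $\rho<\frac{d}{2\alpha}$. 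Multiplying that by the cardinality does \emph{not} give $t^{1-\alpha\rho+O(\epsilon)}$; it gives the wrong exponent except at the boundary $\rho=\frac{d}{2\alpha}$. To salvage the union-bound route you must actually evaluate $\sum_{y\in\calH_{\rho-5\epsilon}}|y|^{2-d}$ by decomposing into annuli and controlling the cardinality in each annulus (as the paper does for the \emph{lower} bound, via the sets $\underline\calH_\rho$ and $D(k)$ in~\eqref{eq:num_annuli}--\eqref{eq:1st-moment}); the exponent then comes out correctly, but it is not a two-factor product as you wrote.

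The paper's own proof of Lemma~\ref{lem:highest} sidesteps all of this bookkeeping by a cleaner route. It introduces the unrestricted level set $\overline\calH_{\rho-5\epsilon}(t)$ and computes the \emph{annealed} hitting probability exactly: averaging over the i.i.d.\ scenery gives $\P\otimes P_0(\sfH\ge 2t)=E_0\bigl[(1-p)^{|S_{[0,2t]}|}\bigr]$ with $p=t^{-\alpha\rho+5\alpha\epsilon+o(1)}$, and Jensen together with $E_0[|S_{[0,2t]}|]\asymp t$ (Dvoretzky--Erd\H{o}s) gives $\P\otimes P_0(\sfH<2t)\le t^{1-\alpha\rho+5\alpha\epsilon+o(1)}$. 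This avoids any quenched control of $|\calH|$ and any single-site hitting estimates. The quenched bound is then obtained by Markov's inequality applied to the random variable $P_0(\sfH<2t)$, followed by Borel--Cantelli along $t=2^n$ and monotonicity to interpolate between dyadic times. This is both shorter and more robust than the first-moment approach, and it is where the constant $6\alpha\epsilon$ comes from (one unit of $\alpha\epsilon$ lost in the Markov step). I would recommend adopting the annealed-to-quenched argument; if you prefer the union-bound route, replace the uniform hitting estimate with the annulus-by-annulus sum.
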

\begin{proof}
We start by introducing an unrestricted version of the level set
\begin{equation}
 \overline\calH_{{\rho-5\epsilon}}(t)=\set{x\in \Z^d \colon z(x)\ge {t^{{\rho-5\epsilon}}}} \supset \calH_{{\rho-5\epsilon}}(t).
\end{equation}
Then we have 
\begin{equation}
\begin{split}
\P\otimes P_0\left(\sfH_{\overline\calH_{{\rho-5\epsilon}}(t)}\ge {2t}\right)
& =\P\otimes P_0\left(\textrm{every }x\in S_{[0,{2t}]} \textrm{ does not belong to }\calH_{{\rho-5\epsilon}}\right)\\
&= E_0\left[\P\left(x \textrm{ does not belong to }\overline\calH_{{\rho-5\epsilon}}(t)\right)^{|S_{[0,{2t}]}|}\right]\\
&= E_0\left[\left(1-{t^{-\alpha\rho+5\alpha\epsilon+o(1)}}\right)^{|S_{[0,{2t}]}|}\right]
\end{split}
\end{equation}
as $t\to\infty$. By using the Jensen inequality and that $E_0[|S_{[0,2t]}|]$ grows linearly as $t\to\infty$ (see~\cite[Theorem~1]{DE51}),
we can further bound the above probability from below by 
\begin{equation}
\begin{split}
\P\otimes P_0\left(\sfH_{\overline\calH_{{\rho-5\epsilon}}(t)}\ge {2t}\right)
 &\ge \left(1-{t}^{-\alpha\rho+5\alpha\epsilon+o(1))}\right)^{E_0[|S_{[0,{2t}]}|]}\\
&\ge \exp\left\{-{t}^{-\alpha\rho+1+5\alpha\epsilon+o(1)}\right\}\\
&\ge 1-{t}^{-\alpha\rho+1+5\alpha\epsilon+o(1)}
\end{split}
\end{equation}
as $t\to \infty$. Then by the Markov inequality, we obtain
\begin{equation} 
\begin{split}
\P\left(P_0\left(\sfH_{\overline\calH_{{\rho-5\epsilon}}(t)}< {2t}\right)
> {t}^{-\alpha\rho+1+6\alpha\epsilon}\right)
& \le t^{\alpha\rho-1-6\alpha\epsilon}\P\otimes P_0\left(\sfH_{\overline\calH_{{\rho-5\epsilon}}(t)}< {2t}\right)\\
& \le {t}^{-\alpha\epsilon+o(1)}
\end{split}
\end{equation}
as $t\to\infty$. Substituting $t=2^n$ ($n\in\N$) and using the Borel--Cantelli lemma, we conclude that $\P$-almost surely,
\begin{equation}
 P_0\left(\sfH_{\overline\calH_{{\rho-5\epsilon}}{(2^n)}}<2^{n+1}\right)
\le 2^{-n(\alpha\rho-1-6\alpha\epsilon+o(1))} 
\end{equation}
for all sufficiently large $n$. Since $P_0\left(\sfH_{\calH_{{\rho-5\epsilon}}{(t)}}<t\right)$ for $t\in [2^{n}, 2^{n+1})$ is bounded by the above left-hand side, we are done.
\end{proof}
The above three lemmas yield the upper bound in~\eqref{eq:unpinned}.
Let us discuss how to include the pinning restriction $S_t=0$. Our starting point is
\begin{equation}
\begin{split}
&P_0\left(A(t)\ge t^\rho, S_t=0\right)\\
&\quad \le P_0\left(A(\tfrac{t}{2})\ge \tfrac{1}{2}t^\rho, S_t=0\right)
 +P_0\left(A(\tfrac{t}{2})\circ\theta_{t/2} \ge \tfrac{1}{2}t^\rho, S_t=0\right)\\
&\quad =2 P_0\left(A(\tfrac{t}{2})\ge \tfrac{1}{2}t^\rho, S_t=0\right),
\end{split}
\label{divide-mid1}
\end{equation}
where $\theta$ denotes the time shift operator and we have used the time reversal. By using the Markov property at time $t/2$ and Lemma~\ref{lem:RWHK}, we obtain
\begin{equation}
\begin{split}
P_0\left(A(\tfrac{t}{2})\ge \tfrac{1}{2}t^\rho, S_t=0\right)
&\le E_0\left[p_{t/2}(S_{t/2},0); A(\tfrac{t}{2})\ge \tfrac{1}{2}t^\rho\right]\\
&\le ct^{-\frac{d}{2}} P_0\left(A(\tfrac{t}{2})\ge \tfrac{1}{2}t^\rho\right). 
\end{split}
\label{divide-mid2}
\end{equation}
Substituting the upper bound in~\eqref{eq:unpinned}, we conclude the upper bound in~\eqref{eq:lbd_pinned}. 
\begin{remark}
\label{rem:coverage}
The above argument shows that the upper bound in~\eqref{eq:lbd_pinned} holds for $\rho\ge \frac{d}{2\alpha}$ as well. For $\rho> \frac{d}{2\alpha}$, Theorem~\ref{RWRS} gives a better bound but we need this boundary coverage in the proof of Theorem~\ref{thm:RCM-power}.
\end{remark}

Next we proceed to prove the lower bounds. We only prove the lower bound in~\eqref{eq:lbd_pinned} since the argument essentially contains the proof of~\eqref{eq:unpinned}. Recall that $\ell_t$ denotes the occupation time measure for $(S_u)_{u\in[0,t]}$ and let
\begin{equation}
 \underline{\calH}_{{\rho}}=\calH_{{\rho}}\cap B
(0,t^{\frac{1}{2}}
) \setminus B
(0,t^{\frac{\alpha\rho}{d}+\epsilon}
),
\end{equation}
where we choose $\epsilon>0$ so small that $\frac{\alpha\rho}{d}+\epsilon<\frac12$. Then it suffices to show that
\begin{equation}
 P_0\left(\ell_t(\underline{\calH}_{{\rho}})\ge 1, S_t=0\right)\ge t^{-\alpha\rho+1-\frac{d}{2}+o(1)}.\label{eq:goal-l_t}
\end{equation} 
To this end, we first bound the probability that the random walk visits $\underline{\calH}_{{\rho}}$ before $t/2$, by using the so-called second moment method. We introduce the annulus
\begin{equation}
 D(k)=B
(0,(k+1)t^{\frac{\alpha\rho}{d}+\epsilon}
)\setminus B
(0,kt^{\frac{\alpha\rho}{d}+\epsilon}
).
\end{equation}
Then a simple argument using the Borel--Cantelli lemma yields that $\P$-almost surely, 
\begin{equation}
 \left|\calH_{{\rho}}\cap D(k)\right|\in \left(k^{d-1}t^{(d-1)\epsilon},k^{d-1}t^{(d+1)\epsilon}\right) \quad \text{for }1\le k \le t^{\frac12-\frac{\alpha\rho}{d}-\epsilon}
\label{eq:num_annuli}
\end{equation}
for all sufficiently large $t>0$. 
\medskip

\noindent\underline{First moment}: By Fubini's theorem, we can write
\begin{equation}
E_0\left[\ell_{t/2}(\underline{\calH}_{{\rho}})\right] 
= \sum_{x\in\underline{\calH}_{{\rho}}}\int_0^{t/2} p_u(0,x)\dd u.
\end{equation}
Let us recall a well-known bound $\int_0^{t/2} p_u(0,x)\dd u\asymp |x|^{2-d}$ as $t\to \infty$ which holds uniformly in $|x|\le t^{1/2}$, and hence in $x\in \underline{\calH}_{{\rho}}$ as well (this can be proved by the same argument as for~\cite[Theorem~4.3.1]{LL10}). Then by dividing $\underline{\calH}_{{\rho}}$ into $\{\underline{\calH}_{{\rho}}\cap D(k)\}_{k\ge 1}$ and using~\eqref{eq:num_annuli}, we get
\begin{equation}
\begin{split}
 E_0\left[\ell_{t/2}(\underline{\calH}_{{\rho}})
\right] 
& \asymp \sum_{k=1}^{t^{1/2-\alpha\rho/d-\epsilon}}\sum_{x\in\underline{\calH}_{{\rho}}\cap D(k)} |x|^{2-d}\\
&\begin{cases}
 \ge t^{-\alpha\rho+1-C\epsilon},\\
 \le t^{-\alpha\rho+1+C\epsilon}.
\end{cases}
\end{split}
\label{eq:1st-moment}
\end{equation}
\medskip

\noindent\underline{Second moment}: To bound the second moment, we first write
\begin{equation}
\begin{split}
 E_0\left[\ell_{t/2}(\underline{\calH}_{{\rho}})^2\right]
&= 2\int_0^{t/2}\int_u^{t/2} \sum_{x,y\in\underline{\calH}_{{\rho}}} p_u(0,x)p_{v-u}(x,y) \dd v\dd u\\
& \le \sum_{x\in\underline{\calH}_{{\rho}}} 
\int_0^t p_u(0,x) \int_0^t 
\sum_{y\in\calH_{{\rho}}}p_v(x,y)\dd v \dd u.
\end{split}
\label{eq:2nd-moment}
\end{equation}
By~\cite[Lemma~2]{DF18}, we know that
\begin{equation}
\sup_{|x|\le t^{1/2}}\sum_{y\in\calH_{{\rho}}}\int_0^t p_v(x,y)\dd v
\le t^\epsilon.
\end{equation}
Substituting this bound into~\eqref{eq:2nd-moment} and using~\eqref{eq:1st-moment}, we find the bound
\begin{equation}
 E_0\left[\ell_{t/2}(\underline{\calH}_{{\rho}})^2\right] 
\le t^{-\alpha\rho+1+C\epsilon}.  
\label{eq:2nd_result}
\end{equation}

From~\eqref{eq:1st-moment},~\eqref{eq:2nd_result} and the Paley--Zygmund inequality, it follows that 
\begin{equation}
\begin{split}
P_0\left(\sfH_{\underline{\calH}_{{\rho}}}<t/2\right)
&=P_0\left(\ell_{t/2}(\underline{\calH}_{{\rho}})>0\right)\\
&\ge t^{-\alpha\rho+1-C\epsilon}. 
\end{split}
\end{equation}
Finally, using the strong Markov property and Lemma~\ref{lem:RWHK} together with $|x|\le t^{1/2}$ for $x\in \underline{\calH}_{{\rho}}$, we obtain
\begin{equation}
\begin{split}
& P_0\left(\ell_t(\underline{\calH}_{{\rho}})\ge 1, S_t=0\right)\\
&\quad \ge P_0\left(\sfH_{\underline{\calH}_{{\rho}}}<t/2, \text{there is no jump on }[\sfH_{\underline{\calH}_{{\rho}}},\sfH_{\underline{\calH}_{{\rho}}}+1], S_t=0\right)\\
&\quad \ge t^{-\alpha\rho+1-\frac{d}{2}-C\epsilon}. 
\end{split}
\end{equation}
Since $\epsilon>0$ is arbitrary, the desired bound~\eqref{eq:goal-l_t} follows. 
\end{proof}
\section{On-diagonal lower bounds}
\label{sec:lower}
In this section, we prove the lower bounds in Theorem~\ref{thm:on-diag}. Recall the representation~\eqref{HK-RWRS}.
\subsection{Lower bound under $\E[z(0)]<\infty$}
We first deal with the simplest case $\E[z(0)]<\infty$. In this case, let us fix $M>\E[z(0)]$ and bound the return probability as follows:
\begin{equation}
\begin{split}
P^\omega_0(X_t=0)
&\ge {\left((4\pi)^{-1/2}+o(1)\right)} E_0\left[A(t)^{-{1\over 2}} ; S_t=0,
A(t)< Mt\right]\\
&\ge {\left((4\pi)^{-1/2}+o(1)\right)}(Mt)^{-{1\over 2}}\left(P_0(S_t=0)-P_0\left(A(t)\ge Mt, S_t=0\right)\right)
\end{split}
\label{eq:lbd_finer}
\end{equation}
as $t\to\infty$. 
The last probability is bounded by
\begin{equation}
P_0\left(A(t)\ge Mt, S_t=0\right)
\le ct^{-\frac{d}{2}} P_0\left(A(\tfrac{t}{2})\ge \tfrac{M}{2}t\right)
\label{eq:LLN_pinned}
\end{equation}
just as in~\eqref{divide-mid1}--\eqref{divide-mid2}. This right-hand side is $o(t^{-d/2})$ thanks to the law of large numbers for the random walk in random scenery. Coming back to~\eqref{eq:lbd_finer} and recalling that $M>\E[z(0)]$ is arbitrary, we conclude that 
\begin{equation}
 P^\omega_0(X_t=0)\ge {\left((4\pi)^{-\frac{d+1}{2}}\E[z(0)]^{-1/2}+o(1)\right)}t^{-\frac{1+d}{2}}.
\end{equation}
\subsection{Lower bound for $d=1,2$ with $\E[z(0)]=\infty$}
In order to prove the lower bounds for $d=1,2$ with $\E[z(0)]=\infty$ (hence $\alpha\le 1$), we recall~\eqref{HK-RWRS} to bound the return probability from below by
\begin{equation}
\begin{split}
P^\omega_0(X_t=0)
&\ge c E_0\left[A(t)^{-{1 \over 2}}\wedge 1 ; S_t=0,
A(t)< t^{s(d,\alpha)+\epsilon}\right]\\
&\ge ct^{-{s(d,\alpha)\over 2} -{\epsilon\over 2}}\left(P_0(S_t=0)-P_0\left(A(t)\ge t^{s(d,\alpha)+\epsilon}\right)\right)\\
&\ge ct^{-{s(d,\alpha)\over 2}-{\epsilon\over 2}}\left(c t^{-{d\over 2}}-P_0\left(A(t)\ge t^{s(d,\alpha)+\epsilon}\right)\right).
\end{split}
\label{eq:lbd12}
\end{equation}
Since Theorem~\ref{RWRS} implies
\begin{equation}
\label{upper-tail}
 P_0\left(A(t)\ge t^{s(d,\alpha)+\epsilon}\right)=o\left(t^{-{d\over 2}}\right)
\end{equation}
in this case, the desired lower bound follows. 

\subsection{Lower bound  for $d\ge 3$ with $\E[z(0)]=\infty$}
\label{sec:lbd3}
It remains to deal with the case $d\ge 3$ and $\E[z(0)]=\infty$. As before, we write
\begin{equation}
\begin{split}
P^\omega_0(X_t=0)
&\ge c E_0\left[A(t)^{-{1\over 2}}\wedge 1 ; S_t=0,
A(t)< t^{{1\over \alpha}+2\epsilon}\right]\\
&\ge ct^{-{1\over 2\alpha} -\epsilon}\left(P_0(S_t=0)-P_0\left(A(t)\ge t^{{1\over \alpha}+2\epsilon}, S_t=0\right)\right).
\end{split}
\end{equation}
Since the probability in the last line is $o(t^{-d/2})$ by Theorem~\ref{power-law}, we are done.
\section{On-diagonal upper bounds}
\label{sec:upper}
In this section, we prove the upper bounds in Theorem~\ref{thm:on-diag}. Assuming Proposition~\ref{prop:RWRSlower}, we can deduce the desired upper bound as follows: 
\begin{equation}
\begin{split}
P^\omega_0(X_t=0)
&\sim {(4\pi)^{-1/2}}E_0\left[A(t)^{-{1\over 2}} ; S_t=0,A(t)> t^{s(d,\alpha)-2\epsilon}\right]+\exp\left\{-t^{c(\epsilon)}\right\}\\
&\le {(4\pi)^{-1/2}}t^{-{s(d,\alpha)\over 2} + \epsilon}P_0(S_t=0)+\exp\left\{-t^{c(\epsilon)}\right\}\\
&\le {(4\pi)^{-\frac{d+1}{2}}}t^{-{s(d,\alpha)\over 2}-{d\over 2} + \epsilon}+\exp\left\{-t^{c(\epsilon)}\right\}
\end{split}
\end{equation}
as $t\to\infty$. 
If $\E[z(0)]<\infty$, then we can replace $t^{s(d,\alpha)-2\epsilon}$ in the first line by $t(\E[z(0)]-\epsilon)$ and the finer asymptotics follows. 

Therefore it remains to prove Proposition~\ref{prop:RWRSlower}. The proof splits into four cases: (i) $\E[z(0)]<\infty$, (ii) $\alpha= 1$ and $\E[z(0)]=\infty$, (iii) $d=1$ and $\alpha<1$, and (iv) $d\ge 2$ and $\alpha<1$. 

\subsection{Upper bound under $\E[z(0)]<\infty$}
For any $\epsilon>0$, let us take $M>0$ so large that $\E[z(0)\wedge M]\ge \E[z(0)]-\epsilon$ and define 
\begin{equation}
 V_M(t)=\int_0^t M-z(S_u)\wedge M \dd u. 
\end{equation}
It is then easy to check that $A(t) <t(\E[z(0)]-2\epsilon)$ implies 
\begin{equation}
\begin{split}
  V_M(t)&> t(M-\E[z(0)]+2\epsilon)\\
&\ge t(\E[M-z(0)\wedge M]+\epsilon).
\end{split}
\end{equation}
This is an upper deviation for the random walk in random scenery and (76) in~\cite{DF18} shows that for any $\epsilon>0$, $\P$-almost surely, 
\begin{equation}
P_0\left(V_M(t)\ge t(\E[M-z(0)\wedge M]+\epsilon)\right)\le \exp\left\{-t^{1-\epsilon}\right\}
\end{equation}
for all sufficiently large $t$. Therefore we obtain the desired bound
\begin{equation}
P_0\left(A(t) <t(\E[z(0)]-2\epsilon)\right)\le \exp\left\{-t^{1-\epsilon}\right\}.
\label{eq:boundedRWRS}
\end{equation}
\begin{remark}
The process $V_M$ is a random walk in bounded scenery. Asselah--Castell~\cite{AC03b} proved a large deviation principle with rate $t(\log t)^{-2/d}$ for the Brownian motion in bounded scenery. Although not stated in~\cite{AC03b}, the rate function is positive except at $\E[M-z(0)\wedge M]$ (A.~Asselah and F.~Castell, personal communication, March 2, 2019). 
This improves the bound in~\eqref{eq:boundedRWRS} to $\exp\{-ct(\log t)^{-2/d}\}$. We include the proof of above weaker result for the sake of completeness. 
\end{remark}
\subsection{Upper bound for $\alpha= 1$ with $\E[z(0)]=\infty$}
Recall that we have $s(d,\alpha)=1$. Let us define Bernoulli random variables by
\begin{equation}
 \tilde{z}(x)= 1_{z(x)\ge 1} \le z(x)
\end{equation}
and let $\tilde{A}(t)=\int_0^t\tilde{z}(S_u)\dd u$. Then we know 
\begin{equation}
\label{eq:Laplace}
\begin{split}
  E_0\left[\exp\left\{-A(t)\right\}\right]
 &\le E_0\left[\exp\left\{-\tilde{A}(t)\right\}\right]\\
 &\le \exp\left\{-c\frac{t}{(\log t)^{2/d}}\right\}
\end{split}
\end{equation}
for some $c>0$. This bound is first proved in the continuous setting in~\cite{Szn93c}. See~\cite{BK01b} for a result in more general setting and~\cite{Fuk09b} for a simple argument to derive~\eqref{eq:Laplace} from the result in~\cite{DV79}. By~\eqref{eq:Laplace} and Chebyshev's inequality, we find
\begin{equation}
\begin{split}
P_0(A(t)\le t^{1-\epsilon})
&\le \exp\left\{t^{1-\epsilon}\right\}E_0\left[\exp\left\{-A(t)\right\}\right]\\
&\le \exp\left\{-\frac{c}{2}\frac{t}{(\log t)^{2/d}}\right\}
\end{split}
\end{equation}
for sufficiently large $t$, and Proposition~\ref{prop:RWRSlower} follows in this case. 
\subsection{Upper bound for $\alpha<1$ and $d=1$}
In this case, we have $s(d,\alpha)=\frac{\alpha+1}{2\alpha}$. Let us recall the notation
\begin{equation}
\calH_{{1/2\alpha-\epsilon}}=\set{x\in \Z\cap [-t^{1/2+\epsilon},t^{1/2+\epsilon}]
 \colon z(x)\ge t^{{1\over 2\alpha}-\epsilon}},
\end{equation}
from which the main contribution to $A(t)$ is coming. 
In order to prove Proposition~\ref{prop:RWRSlower} in this case, it suffices to show the following proposition:
\begin{proposition}
\label{local-time}
For sufficiently small $\epsilon>0$ depending on $\alpha$, there exists 
$c>0$ such that $\P$-almost surely, 
\begin{equation}
 P_0\left(\ell_t(\calH_{{1/2\alpha-\epsilon}})\le t^{{1\over 2}-\epsilon}\right)
\le \exp\{-ct^{\epsilon}\}
 \label{lower-deviation1}
\end{equation}
for all sufficiently large $t$. 
\end{proposition}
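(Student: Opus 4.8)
The plan is to prove the much stronger statement that, with overwhelming probability, $\ell_t(\calH_{1/2\alpha-\epsilon})$ is of order $t^{1/2+\alpha\epsilon+o(1)}$, by cutting $[0,t]$ into many time windows and showing that on each window the walk, started anywhere in the relevant region, is with probability bounded below forced to collect a full diffusive amount of local time at a single point of the level set. As a preliminary environment input, note that $\P(z(0)\ge t^{1/2\alpha-\epsilon})=t^{-1/2+\alpha\epsilon+o(1)}$, so a Borel--Cantelli argument along $t=2^n$ followed by interpolation (exactly as for~\eqref{eq:num_annuli}) shows that, $\P$-almost surely, for all large $t$ every subinterval of $[-t^{1/2+\epsilon},t^{1/2+\epsilon}]$ of length $t^{1/2-\alpha\epsilon/2}$ contains a point of $\calH_{1/2\alpha-\epsilon}(t)$; call this the \emph{spreading property}, and work from now on on the $\P$-full event that it holds for all large $t$, so that the remaining bounds are deterministic.

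Set $T=t^{1-\alpha\epsilon}$, so $\sqrt T=t^{1/2-\alpha\epsilon/2}$, let $N=\lfloor t/T\rfloor\asymp t^{\alpha\epsilon}$, and split $[0,t]$ into the blocks $I_j=((j-1)T,jT]$, $j=1,\dots,N$. By the spreading property, for every $x_0\in[-\tfrac12 t^{1/2+\epsilon},\tfrac12 t^{1/2+\epsilon}]$ there is a point $y(x_0)\in\calH_{1/2\alpha-\epsilon}$ with $|y(x_0)-x_0|\le\tfrac12\sqrt T$. The crucial elementary estimate is that there are absolute constants $\epsilon_0,p_0>0$ such that for every such $x_0$
\begin{equation}
P_{x_0}\Bigl(\int_0^{T}1_{\{S_u=y(x_0)\}}\dd u\ge\epsilon_0\sqrt T\Bigr)\ge p_0.
\label{eq:block-lt}
\end{equation}
Indeed, since $E_{x_0}[\sfH_{y(x_0)}]\asymp|y(x_0)-x_0|^2\le\tfrac14 T$, Markov's inequality gives $P_{x_0}(\sfH_{y(x_0)}\le T/2)\ge c>0$; and started from any site a simple random walk spends at least $\epsilon_0\sqrt s$ units of local time at its starting point by any time $s\ge1$ with probability at least $p_1>0$, uniformly in $s$ (the law of $s^{-1/2}\int_0^s 1_{\{S_u=0\}}\dd u$ converges to that of the local time at $0$ of a Brownian motion, which has no atom at the origin). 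Applying the strong Markov property at $\sfH_{y(x_0)}$ and combining these two facts yields~\eqref{eq:block-lt}.

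Now let $\chi_j$ be the indicator that $S$ spends at least $\epsilon_0\sqrt T$ units of time at $y(S_{(j-1)T})$ during $I_j$. On the confinement event $\{\max_{u\le t}|S_u|\le\tfrac12 t^{1/2+\epsilon}\}$ each $S_{(j-1)T}$ lies in the region above and $y(S_{(j-1)T})$ is determined by $S_{(j-1)T}$, so by the Markov property at time $(j-1)T$ and~\eqref{eq:block-lt} we have $P_0(\chi_j=1\mid S_{(j-1)T})\ge p_0$ on that event. Hence $\sum_{j\le N}\chi_j$ conditionally stochastically dominates a $\mathrm{Binomial}(N,p_0)$ variable, and a standard Chernoff estimate gives $P_0\bigl(\sum_{j\le N}\chi_j<\tfrac12 p_0N,\ \max_{u\le t}|S_u|\le\tfrac12 t^{1/2+\epsilon}\bigr)\le\exp\{-ct^{\alpha\epsilon}\}$. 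Since the $I_j$ are disjoint, on the complement of this event intersected with the confinement event one has $\ell_t(\calH_{1/2\alpha-\epsilon})\ge\tfrac12 p_0 N\,\epsilon_0\sqrt T=t^{1/2+\alpha\epsilon/2+o(1)}>t^{1/2-\epsilon}$ for all large $t$. Finally, by the reflection principle and Lemma~\ref{lem:RWHK}, $P_0(\max_{u\le t}|S_u|>\tfrac12 t^{1/2+\epsilon})\le\exp\{-ct^{2\epsilon}\}$. Collecting the three estimates gives $P_0(\ell_t(\calH_{1/2\alpha-\epsilon})\le t^{1/2-\epsilon})\le\exp\{-t^{c(\epsilon)}\}$ for all large $t$ with, e.g., $c(\epsilon)=\tfrac12\alpha\epsilon>0$, which yields the proposition (the exponent obtained is slightly smaller than the $\epsilon$ stated, but this is immaterial for all later uses).

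The main obstacle is the uniform single-block estimate~\eqref{eq:block-lt}: it requires simultaneously the environment input (the spreading property, which guarantees that a point of the level set always sits within half a block's diffusive distance of the walk, regardless of where the walk currently is) and the one-dimensional random walk fact that the local time at a point is of the full order $\sqrt T$ with probability bounded away from zero, uniformly in the time horizon. Everything else --- the Borel--Cantelli input, the confinement bound, and the conditional Chernoff estimate over the $N$ blocks --- is routine.
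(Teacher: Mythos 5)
Your proof takes a genuinely different route from the paper's. The paper decomposes the path into successive excursions away from $\calH_{1/2\alpha-\epsilon}$ (returns $\sfR_k$ and departures $\sfD_k$), proves via a large deviation principle for truncated heavy-tailed sums~\cite{Cha12} that the number $N_t$ of returns before time $t$ exceeds $t^{1/2+\epsilon}$ with overwhelming probability, and then lower-bounds $\ell_t$ by the sum of the i.i.d.\ exponential holding times $\sfD_k-\sfR_k$. Your argument instead partitions $[0,t]$ into $N\asymp t^{\alpha\epsilon}$ disjoint time blocks of length $T=t^{1-\alpha\epsilon}$, uses the spreading property of the level set (a close relative of~\eqref{max-discrep}) to guarantee a target point of $\calH_{1/2\alpha-\epsilon}$ within $\tfrac12\sqrt T$ of the walk at the start of each block, proves a uniform single-block estimate that the walk accumulates local time of order $\sqrt T$ at that target with probability bounded below, and finishes with a conditional Chernoff/Azuma bound over the $N$ blocks. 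Both proofs require essentially the same environmental input; yours trades the excursion bookkeeping and the specialized heavy-tailed LDP for an elementary supermartingale concentration estimate, at the cost of the slightly weaker exponent $\alpha\epsilon$ in place of $\epsilon$, which is indeed harmless since Proposition~\ref{prop:RWRSlower} only needs some positive power of $t$.

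There is one step whose justification is wrong. You assert $E_{x_0}[\sfH_{y(x_0)}]\asymp|y(x_0)-x_0|^2$ and apply Markov's inequality; but for the one-dimensional simple random walk the mean hitting time of any distinct point is infinite (if $E_0[\sfH_m]$ were finite for $m\ne 0$, optional stopping of the bounded-increment martingale $S$ would force $m=E_0[S_{\sfH_m}]=0$), so Markov's inequality yields no information. The conclusion you need, namely $P_{x_0}\bigl(\sfH_{y(x_0)}\le T/2\bigr)\ge c>0$ whenever $|x_0-y(x_0)|\le\tfrac12\sqrt T$, is nevertheless correct and should be obtained differently: assuming $y(x_0)>x_0$ (the other case is symmetric), the event $\{\sfH_{y(x_0)}\le T/2\}$ contains $\{\max_{u\le T/2}(S_u-x_0)\ge\tfrac12\sqrt T\}$, and the reflection principle together with the central limit theorem gives
\begin{equation}
P_{x_0}\Bigl(\max_{u\le T/2}(S_u-x_0)\ge\tfrac12\sqrt T\Bigr)
\ge P_0\Bigl(S_{T/2}\ge\tfrac12\sqrt T\Bigr)\ge c>0
\end{equation}
for all sufficiently large $T$, since $S_{T/2}/\sqrt{T/2}$ converges in law to a standard Gaussian and $\tfrac12\sqrt T=\tfrac{1}{\sqrt 2}\sqrt{T/2}$. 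With this repair the single-block estimate, and hence your whole argument, go through.
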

In order to estimate the probability~\eqref{lower-deviation1}, we introduce
the successive times of returns to/departures from $\calH_{{1/2\alpha-\epsilon}}$. Set $\sfR_0=\sfD_0=0$ and for $k\ge 1$,  
\begin{align}
\sfR_k&=\inf\set{u\ge \sfD_{k-1}\colon S_u\in\calH_{{1/2\alpha-\epsilon}}},\\
\sfD_k&=\inf\set{u\ge \sfR_k\colon S_u\not\in\calH_{{1/2\alpha-\epsilon}}}. 
\end{align}
See also Figure~\ref{fig:RandD}. We are going to show that the number of returns before time $t$, 
which we denote by 
\begin{equation}
 N_t=\sup\set{k\ge 1\colon \sfR_k <t},
\end{equation}
cannot be too small. 
\begin{figure}
 \centering
 \includegraphics{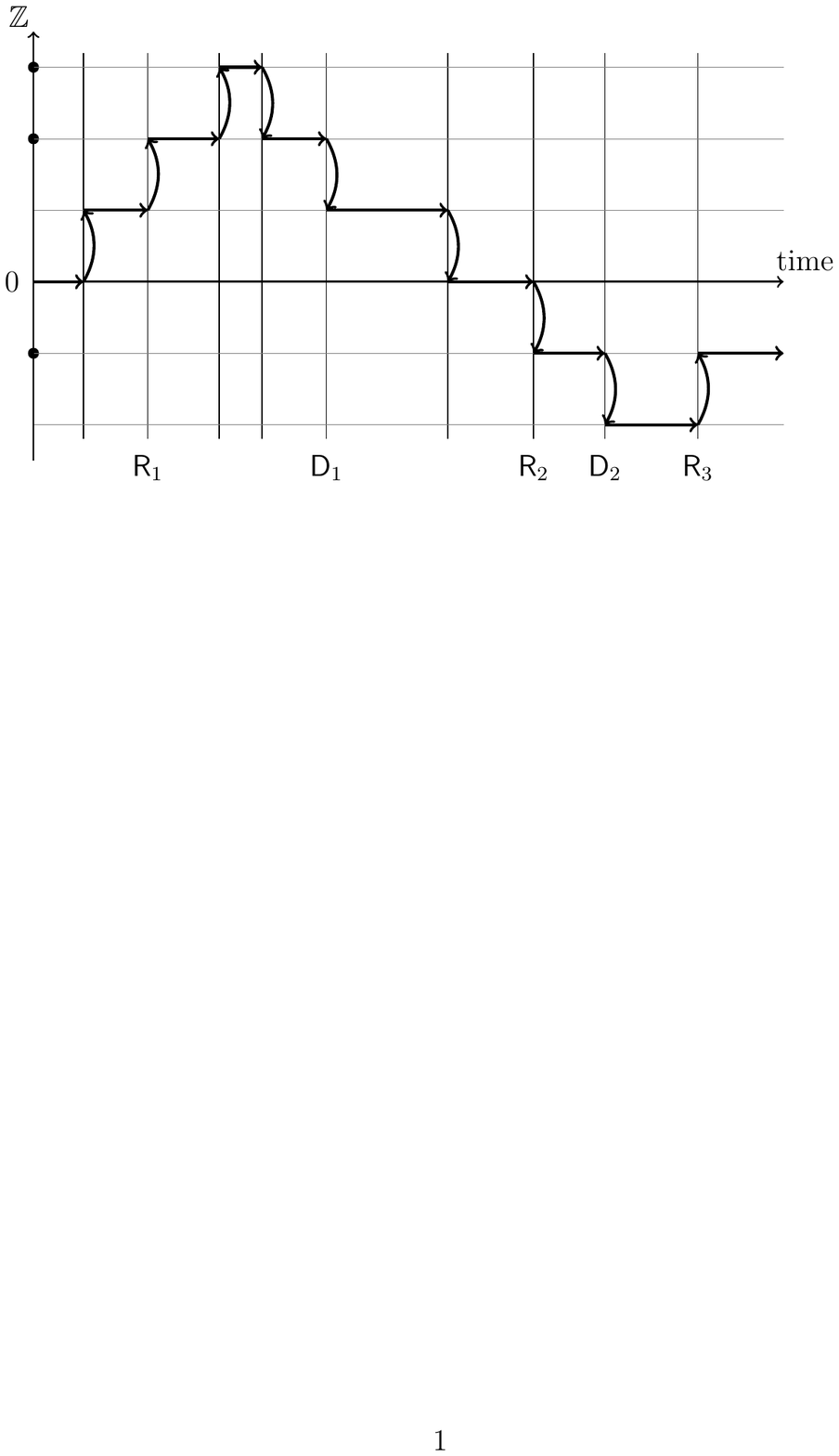} 
\caption{Schematic picture of returns/departure system. The arrows indicate the movements of the random walk and $\bullet$ are the points belonging to $\mathcal{H}_{1/2\alpha-\epsilon}$. \label{fig:RandD}}
\end{figure}
To this end, we need some controls on the structure of $\calH_{{1/2\alpha-\epsilon}}$. More precisely, for $\sfD_k-\sfR_k$ to be not too large, we need a control on the size of connected components of $\mathcal{H}_{1/2\alpha-\epsilon}$; and for $\sfR_k-\sfD_k$ to be not too large, we need to control the size of \emph{vacant} intervals. Here and below, the term interval indicates the set of the form $\{k+l\}_{1\le l \le m}$ for $k\in\Z$ and $m\in \N$, and we write it as $[k,k+m]$. The following lemma provides the above-mentioned controls. 
\begin{lemma}
\label{discrepancy}
When $\epsilon>0$ is sufficiently small, $\P$-almost surely, the following hold:
\begin{equation}
%
\text{any interval $I\subset\calH_{1/2\alpha-\epsilon}$ has length at most 3}
\label{cluster<4}
\end{equation}
and 
\begin{equation}
%
\text{any interval $I\subset[-t^{\frac12+\epsilon},t^{\frac12+\epsilon}]\setminus \calH_{1/2\alpha-\epsilon}$ has length at most $t^{\frac12-\epsilon}$}
\label{max-discrep}
\end{equation} 
for all sufficiently large $t$.
\end{lemma}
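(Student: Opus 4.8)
The plan is to prove~\eqref{cluster<4} and~\eqref{max-discrep} by first--moment (union bound) estimates at the dyadic times $t=2^{n}$, then apply the Borel--Cantelli lemma and pass to all large $t$ by a monotone interpolation. For the interpolation, when $t\in[2^{n},2^{n+1}]$ one sandwiches $\calH_{1/2\alpha-\epsilon}(t)$ between two level sets at dyadic times whose thresholds and boxes differ only by constant factors: it is \emph{contained in} the set $\widetilde\calH_{n}$ with threshold $(2^{n})^{1/(2\alpha)-\epsilon}$ and box $[-2(2^{n})^{1/2+\epsilon},2(2^{n})^{1/2+\epsilon}]$ (lowering $t$ to $2^{n}$ lowers the threshold and enlarges the box), so its clusters are clusters of $\widetilde\calH_{n}$; and it \emph{contains} the set $\widehat\calH_{n}$ with threshold $(2^{n+1})^{1/(2\alpha)-\epsilon}$ and box $[-(2^{n})^{1/2+\epsilon},(2^{n})^{1/2+\epsilon}]$, so its vacant intervals are vacant intervals of $\widehat\calH_{n}$ measured in the $2^{n+1}$--box. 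Hence it suffices to prove, $\P$--a.s.\ for all large $n$, that $\widetilde\calH_{n}$ contains no interval of length $4$ and $\widehat\calH_{n}$ contains no vacant interval longer than $(2^{n})^{1/2-\epsilon}$, and to sum the failure probabilities over $n$. Below I write $t$ for the dyadic parameter and use~\eqref{ass-tail} in the form $\P(z(0)\ge t^{\beta})=t^{-\alpha\beta+o(1)}$.

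\emph{The cluster bound~\eqref{cluster<4}.} The probability that four prescribed consecutive sites all lie in the level set is $\P(z(0)\ge t^{1/(2\alpha)-\epsilon})^{4}=t^{-2+4\alpha\epsilon+o(1)}$, so a union bound over the $O(t^{1/2+\epsilon})$ runs of four consecutive sites in the box bounds the failure probability by $t^{-3/2+(1+4\alpha)\epsilon+o(1)}$, which is summable along $t=2^{n}$ once $\epsilon<\frac{1}{2(1+4\alpha)}$; Borel--Cantelli and the interpolation above then give~\eqref{cluster<4}. (Running the same argument with pairs rather than quadruples already yields clusters of size $\le 1$ for $\epsilon$ small, so the constant $3$ in~\eqref{cluster<4} has ample room.)

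\emph{The vacant--interval bound~\eqref{max-discrep}.} Put $L:=\lceil t^{1/2-\epsilon}\rceil+1$ and $p:=\P(z(0)\ge t^{1/(2\alpha)-\epsilon})$. The probability that a prescribed window of $L$ consecutive sites of the box avoids the level set is $(1-p)^{L}\le e^{-pL}$, so a union over the $O(t^{1/2+\epsilon})$ windows bounds the failure probability by $O(t^{1/2+\epsilon})e^{-pL}$, and the sum over $t=2^{n}$ converges as soon as $pL/\log t\to\infty$. Securing this divergence is the one delicate point. By~\eqref{ass-tail}, $p=t^{-1/2+\alpha\epsilon+o(1)}$, so the mean spacing of the level set is $t^{1/2-\alpha\epsilon+o(1)}$, which for $\alpha<1$ is polynomially \emph{larger} than $t^{1/2-\epsilon}$; consequently the length $L$ appearing in~\eqref{max-discrep} has to be read with a small parameter genuinely larger than the $\epsilon$ that governs the box and the spacing. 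Concretely, reading the threshold in $\calH_{1/2\alpha-\epsilon}$ as $t^{1/(2\alpha)-\epsilon'}$ with a fixed $\epsilon'>\epsilon/\alpha$ gives $pL=t^{\alpha\epsilon'-\epsilon+o(1)}\to\infty$, after which the union bound and Borel--Cantelli close the argument; the slightly smaller threshold only lowers the exponent in $A(t)\ge t^{1/(2\alpha)-\epsilon'}\,\ell_{t}(\calH_{1/2\alpha-\epsilon'})$ by a further $O(\epsilon)$, which is immaterial for Proposition~\ref{prop:RWRSlower}. I expect this bookkeeping of the two roles of $\epsilon$ --- rather than any probabilistic difficulty --- to be the main obstacle; once it is handled, both halves of Lemma~\ref{discrepancy} are immediate first--moment estimates, and no second moment is needed.
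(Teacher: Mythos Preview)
Your approach --- first-moment (union) bounds plus Borel--Cantelli along dyadic times, with the monotone interpolation to general $t$ --- is exactly the paper's, and for~\eqref{cluster<4} the computations agree line for line.

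For~\eqref{max-discrep} you have in fact put your finger on a genuine slip in the paper's own argument. With the \emph{same} $\epsilon$ governing both the threshold $t^{1/(2\alpha)-\epsilon}$ and the gap length $t^{1/2-\epsilon}$, one gets $pL=t^{(\alpha-1)\epsilon+o(1)}\to 0$ since here $\alpha<1$; the exponent $\epsilon(1-\alpha)$ displayed in the paper carries the wrong sign. Indeed the mean spacing $p^{-1}=t^{1/2-\alpha\epsilon+o(1)}$ already exceeds $t^{1/2-\epsilon}$ when $\alpha<1$, so the statement cannot hold verbatim with a single $\epsilon$. Your repair --- lowering the threshold to $t^{1/(2\alpha)-\epsilon'}$ with a fixed $\epsilon'>\epsilon/\alpha$, so that $pL=t^{\alpha\epsilon'-\epsilon+o(1)}\to\infty$ --- is correct and, as you say, harmless downstream: the only place~\eqref{max-discrep} is used is to feed the bound $(x_{+}-x_{-})\le t^{1/2-\epsilon}$ into~\eqref{stay-in-x+-}, and the resulting extra $O(\epsilon)$ loss in the final exponent is absorbed in the conclusion of Proposition~\ref{prop:RWRSlower}.
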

\begin{proof}
Using the assumption on the tail, we have 
\begin{equation}
\begin{split}
\P(x\in \calH_{1/2\alpha-\epsilon})
&\le \P(z(x)\ge t^{{1/2\alpha}-\epsilon})\\
&= t^{-{1/2+\alpha\epsilon}+o(1)}. 
\end{split}
\label{one-point}
\end{equation}
From this and the union bound, it is easy to see that 
\begin{equation}
\P\left(\exists \text{an interval }I \subset \mathcal{H}_{1/2\alpha-\epsilon}, |I| \ge 4\right)
\le t^{-\frac32+c\epsilon}
\end{equation}
and~\eqref{cluster<4} follows by the Borel--Cantelli lemma.

Next, for any $x\in [-t^{\frac12+\epsilon},t^{\frac12+\epsilon}]$, we have
\begin{equation}
\begin{split}
&\P\left(\exists y> x+t^{\frac12-\epsilon}\text{ such that }[x,y]\subset [-t^{\frac12+\epsilon},t^{\frac12+\epsilon}]\setminus \calH_{1/2\alpha-\epsilon}\right)\\
&\quad\le \left(1-t^{-{\frac{1}{2}+\alpha\epsilon}+o(1)}\right)^{t^{{1/2-}\epsilon}}\\
&\quad =\exp\set{-t^{{\epsilon(1-\alpha)}+o(1)}}
\end{split}
\end{equation}
by using~\eqref{one-point}, since the event in the first line requires that successive $t^{{1/2-\epsilon}}$ points fail to belong to $\mathcal{H}_{1/2\alpha-\epsilon}$. 
Since $\alpha<1$ and there are at most $2t^{1/2+\epsilon}$ choices of $x$ as the left endpoint of the interval $I$, the union bound and the Borel--Cantelli lemma yield~\eqref{max-discrep}.
\end{proof}
We use this lemma to derive an upper bound on $\sfR_1$ and 
$\sfR_{k+1}-\sfR_k=\sfR_2\circ \theta_{\sfR_k}$ for $k\ge 1$ 
(in the sense of stochastic domination). Since it is useful only when the 
random walk is started from the \emph{interior} 
\begin{equation}
\calH_{{1/2\alpha-\epsilon}}^\circ=\calH_{{1/2\alpha-\epsilon}}\setminus\{\inf\calH_{{1/2\alpha-\epsilon}},\sup\calH_{{1/2\alpha-\epsilon}}\}, 
\end{equation}
we introduce the stopping time 
\begin{equation}
 \sfT=\inf\set{u\ge 0\colon S_u\not \in (\inf\calH_{{1/2\alpha-\epsilon}},\sup\calH_{{1/2\alpha-\epsilon}})}
\end{equation}
and show the following lemma.
\begin{lemma}
\label{exit}
Fix $\epsilon>0$ and suppose that \eqref{max-discrep} holds. 
Then there exists $c>0$ such that
\begin{equation}
 P_0(\sfT\le t)
\le \exp\set{-ct^\epsilon}
\end{equation}
for all sufficiently large $t$. 
\end{lemma}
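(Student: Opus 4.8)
The plan is to bound the exit time $\sfT$ from the maximal interval $(\inf\calH_{1/2\alpha-\epsilon},\sup\calH_{1/2\alpha-\epsilon})$ by exploiting the fact that, by~\eqref{max-discrep}, the walk can never travel more than $t^{1/2-\epsilon}$ consecutive steps without meeting a point of $\calH_{1/2\alpha-\epsilon}$, while the full interval has length of order $t^{1/2+\epsilon}$ (indeed the highest point of the $z$-field in $[-t^{1/2+\epsilon},t^{1/2+\epsilon}]$ is at distance $\asymp t^{1/2+\epsilon}$ from the origin with overwhelming probability). So the walk starting at $0$ must cross a distance of order $t^{1/2+\epsilon}$ in one direction before exiting, and a one-dimensional simple random walk needs time of order $t^{1+2\epsilon}\gg t$ to do that. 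First I would fix a deterministic lower bound $L$ on $\min\{|\inf\calH_{1/2\alpha-\epsilon}|,\sup\calH_{1/2\alpha-\epsilon}\}$ of the form $L=t^{1/2+\epsilon/2}$, valid $\P$-a.s.\ for large $t$ by a Borel--Cantelli argument based on~\eqref{one-point} (the probability that no point of $\calH_{1/2\alpha-\epsilon}$ lies in, say, $[t^{1/2+\epsilon/2},t^{1/2+\epsilon}]$ is stretched-exponentially small).

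The core step is then a hitting-time estimate for the continuous-time simple random walk on $\Z$: I want
\[
P_0\bigl(\sfH_{\{-L\}}\wedge \sfH_{\{L\}}\le t\bigr)\le \exp\{-ct^{\epsilon}\}
\]
when $L=t^{1/2+\epsilon/2}$ and hence $L^2/t=t^{2\epsilon}\to\infty$. This is a standard exponential Chebyshev / optional-stopping argument: apply the maximal inequality together with Lemma~\ref{lem:RWHK}, which gives $p_u(0,x)\le c_3 u^{-1/2}\exp\{-c_4 x^2/u\}$ for $|x|\le u$, to bound $P_0(\max_{0\le u\le t}|S_u|\ge L)$ by $c\exp\{-c_4 L^2/t\}=c\exp\{-c_4 t^{2\epsilon}\}$, which is stretched-exponentially small. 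Equivalently one can reduce, via the reflection principle, to the one-point tail at time $t$; either way the point is simply that a SRW run for time $t$ is very unlikely to reach distance $t^{1/2+\epsilon/2}$. On the event that $\max_{0\le u\le t}|S_u|<L$, the walk never leaves $(-L,L)\subset(\inf\calH_{1/2\alpha-\epsilon},\sup\calH_{1/2\alpha-\epsilon})$ before time $t$, so $\sfT>t$; taking complements gives $P_0(\sfT\le t)\le \exp\{-ct^{2\epsilon}\}\le\exp\{-ct^{\epsilon}\}$ for large $t$, as claimed.

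I do not expect a genuine obstacle here; the only mild subtlety is that the statement fixes $\epsilon$ and asks that \eqref{max-discrep} hold, and the lemma is phrased $\P$-almost surely only through that hypothesis, so I should be careful to extract a \emph{deterministic} lower bound $L$ on the distance from $0$ to the endpoints of $\calH_{1/2\alpha-\epsilon}$ either directly from~\eqref{max-discrep} (an interval of length $>t^{1/2-\epsilon}$ adjacent to $0$ and free of $\calH_{1/2\alpha-\epsilon}$-points is forbidden, so in fact $\min\{|\inf\calH_{1/2\alpha-\epsilon}|,\sup\calH_{1/2\alpha-\epsilon}\}\le t^{1/2-\epsilon}$ could a priori happen from that estimate alone, hence I should instead use~\eqref{max-discrep} applied around $0$ to conclude there is a point of $\calH_{1/2\alpha-\epsilon}$ within distance $t^{1/2-\epsilon}$ of $0$ on each side — wait, that is the wrong direction). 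The clean fix is to note that it suffices to run the walk inside the smaller symmetric window: by~\eqref{max-discrep} the walk cannot traverse an interval of length $t^{1/2-\epsilon}$ in $[-t^{1/2+\epsilon},t^{1/2+\epsilon}]$ without hitting $\calH_{1/2\alpha-\epsilon}$, but since $\sfT$ only asks to stay between the extreme points of $\calH_{1/2\alpha-\epsilon}$, and~\eqref{max-discrep} itself forces those extreme points to be within $t^{1/2-\epsilon}$ of $\pm t^{1/2+\epsilon}$ respectively, we get $\inf\calH_{1/2\alpha-\epsilon}\le -t^{1/2+\epsilon}+t^{1/2-\epsilon}$ and $\sup\calH_{1/2\alpha-\epsilon}\ge t^{1/2+\epsilon}-t^{1/2-\epsilon}$, so both endpoints lie at distance $\ge t^{1/2+\epsilon}/2$ from $0$ for large $t$, purely as a consequence of~\eqref{max-discrep}. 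Then the hitting-time bound above with $L=t^{1/2+\epsilon}/2$ gives $P_0(\sfT\le t)\le\exp\{-c t^{2\epsilon}\}$ and we are done; no Borel--Cantelli beyond what is already in Lemma~\ref{discrepancy} is needed.
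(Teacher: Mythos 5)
Your final argument is correct and takes essentially the same route as the paper: deduce directly from the hypothesis \eqref{max-discrep} that both $\inf\calH_{1/2\alpha-\epsilon}$ and $\sup\calH_{1/2\alpha-\epsilon}$ lie at distance of order $t^{1/2+\epsilon}$ from the origin, then bound $P_0(\max_{0\le u\le t}|S_u|\ge c\,t^{1/2+\epsilon})$ by the reflection principle and Lemma~\ref{lem:RWHK}. Your initial Borel--Cantelli detour is indeed unnecessary, as you correctly observe at the end; \eqref{max-discrep} alone yields the deterministic exit window.
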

\begin{proof}
It follows from~\eqref{max-discrep} that
\begin{equation}
\left(-t^{{1+\epsilon\over 2}}, t^{{1+\epsilon\over 2}}\right)
\subset (\inf\calH_{{1/2\alpha-\epsilon}},\sup\calH_{{1/2\alpha-\epsilon}}). 
\end{equation}
Then the desired bound is a simple consequence of the reflection principle
and Lemma~\ref{lem:RWHK}.
\end{proof}
\begin{lemma}
\label{return}
Fix $\epsilon>0$ and suppose that~\eqref{cluster<4} and~\eqref{max-discrep} hold. Then there exist constants $c, C>0$ such that the following hold for all sufficiently large $t$:
\begin{align}
P_0\left(\sfR_1 \ge {\tfrac{t}{2}} \right)
&\le \exp\set{-c {t^\epsilon}},\label{hit-H}\\
\sup_{x\in \calH_{{1/2\alpha-\epsilon}}^\circ} 
P_x(\sfR_2 \ge r)
&\le {(Cr^{-{1\over 2}}\wedge 1)}1_{\{r\le t^{{1-{\epsilon}}}\}}
 +\exp\set{-c {r\over t^{{1-{2\epsilon}}}}}1_{\{r> t^{{1-{\epsilon}}}\}}.
\label{return-prob}
\end{align}
\end{lemma}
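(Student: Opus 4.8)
\textbf{Proof proposal for Lemma~\ref{return}.}
The plan is to bound $\sfR_1$ and $\sfR_2$ separately, using the structural controls \eqref{cluster<4} and \eqref{max-discrep} to reduce everything to one-dimensional simple random walk estimates available from Lemma~\ref{lem:RWHK}. For \eqref{hit-H}, the event $\{\sfR_1\ge t/2\}$ means the walk fails to hit $\calH_{1/2\alpha-\epsilon}$ by time $t/2$; by \eqref{max-discrep} every vacant interval inside $[-t^{1/2+\epsilon},t^{1/2+\epsilon}]$ has length at most $t^{1/2-\epsilon}$, so either the walk has left the box $[-t^{1/2+\epsilon},t^{1/2+\epsilon}]$ before $t/2$ (which has stretched-exponentially small probability by the reflection principle and Lemma~\ref{lem:RWHK}) or it stays in the box and must cross one of these short gaps, and a one-dimensional walk run for time $t/2$ cannot remain confined to an interval of length $t^{1/2-\epsilon}$ except with probability $\exp\{-ct^{\epsilon}\}$ — again a consequence of Lemma~\ref{lem:RWHK} (small-ball estimate, or equivalently the exit-time bound as in Lemma~\ref{exit}). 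Taking the union of the two alternatives gives \eqref{hit-H}.

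For \eqref{return-prob}, start the walk at $x\in\calH_{1/2\alpha-\epsilon}^\circ$. First, $\sfD_1$ (the departure time) is stochastically dominated by the exit time from a cluster of $\calH_{1/2\alpha-\epsilon}$ containing $x$; by \eqref{cluster<4} that cluster is an interval of length at most $3$, so $\sfD_1$ is dominated by an $O(1)$-mean exponential-type time, in particular $P_x(\sfD_1\ge s)\le Ce^{-cs}$ and also $P_x(\sfD_1 \ge s) \le 1$ trivially. After departing, the walk sits at a site adjacent to the cluster and must re-enter $\calH_{1/2\alpha-\epsilon}$; since $x$ is in the interior, the relevant vacant interval on either side has length at most $t^{1/2-\epsilon}$ by \eqref{max-discrep}, so the re-entry time $\sfR_2-\sfD_1$ is dominated by the time for a one-dimensional walk to exit an interval of length $\le t^{1/2-\epsilon}$. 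For $r\le t^{1-\epsilon}$, the bound $Cr^{-1/2}\wedge 1$ comes from the standard fact that a one-dimensional simple random walk has probability $\asymp r^{-1/2}$ of not yet having moved distance one (more precisely, of the occupation-of-endpoints / return structure); concretely, $P(\text{gambler's ruin time}\ge r)\le Cr^{-1/2}$ for an interval whose length exceeds $\sqrt r$, which holds since $\sqrt r\le t^{1/2-\epsilon/2}$ is below the gap length only in a borderline way — here one uses that the gap has length at most $t^{1/2-\epsilon}$ and $r\le t^{1-\epsilon}$ so $\sqrt r\le t^{1/2-\epsilon/2}$, and the ruin-time tail is $Cr^{-1/2}$ up to the point where $r$ is comparable to the squared gap length. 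For $r>t^{1-\epsilon}$, once $r$ exceeds the squared gap length $t^{1-2\epsilon}$ the confinement becomes exponentially costly: $P(\text{exit time}\ge r)\le \exp\{-cr/t^{1-2\epsilon}\}$ by iterating the $O(t^{1-2\epsilon})$-time exit estimate from Lemma~\ref{lem:RWHK}. Combining the $\sfD_1$ tail with these re-entry tails via $\sfR_2\le \sfD_1+(\sfR_2-\sfD_1)$ and a convolution/union bound yields \eqref{return-prob}.

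The main obstacle I expect is getting the precise $r^{-1/2}$ rate (rather than a cruder polynomial or exponential bound) for the short-time regime $r\le t^{1-\epsilon}$: one must be careful that the $r^{-1/2}$ decay is exactly the tail of the exit time of a one-dimensional walk from an interval whose half-length is at least of order $\sqrt r$, and this requires tracking the relation between $r$, the gap length $t^{1/2-\epsilon}$, and the crossover scale $t^{1-2\epsilon}$. The clean way to do this is: for $r\le t^{1-2\epsilon}$ use that the walk confined to a half-line (ignoring the far boundary) has return/exit-time tail $\asymp r^{-1/2}$ directly from Lemma~\ref{lem:RWHK} by integrating $p_u(0,0)\asymp u^{-1/2}$; for $t^{1-2\epsilon}<r\le t^{1-\epsilon}$ absorb the resulting bound into the constant since $r^{-1/2}\ge t^{-1/2+\epsilon/2}$ is not small, so the bound $Cr^{-1/2}\wedge 1$ still holds (indeed $\wedge 1$ does the work there); and for $r>t^{1-\epsilon}$ switch to the exponential regime. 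Everything else is a routine assembly of reflection-principle and heat-kernel estimates, with no probabilistic input beyond Lemma~\ref{lem:RWHK} and the deterministic geometric facts \eqref{cluster<4}--\eqref{max-discrep}.
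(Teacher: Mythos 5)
Your proposal follows essentially the same route as the paper: decompose $\sfR_2$ into $\sfD_1$ (exponential tail from~\eqref{cluster<4}) plus a re-entry time, and bound the latter by the one-dimensional first-passage tail $\asymp r^{-1/2}$ for $r\le t^{1-\epsilon}$ and by confinement in a gap of length at most $t^{1/2-\epsilon}$ for $r>t^{1-\epsilon}$. Your worry about a separate crossover range $[t^{1-2\epsilon},t^{1-\epsilon}]$ is superfluous: the first-passage bound $P_{x\pm 1}(\sfH_x\ge r)\le Cr^{-1/2}$ (which the paper simply cites from Feller) holds uniformly in $r$ and dominates the exit time from any interval through $x$, so no special treatment of that range is needed.
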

\begin{proof}
We first prove~\eqref{return-prob}. Let $x\in \calH_{{1/2\alpha-\epsilon}}^\circ$ and denote the left and right neighbors of $x$ in $\calH_{{1/2\alpha-\epsilon}}$ by $x_-$ and $x_+$ respectively, that is,
\begin{align}
x_-&=\max\{y\in\calH_{{1/2\alpha-\epsilon}}\colon y<x\},\\
x_+&=\min\{y\in\calH_{{1/2\alpha-\epsilon}}\colon y>x\}. 
\end{align}
Then by using the notion of hitting time $\sfH_y$ to 
a point $y\in\Z$, the return time $\sfR_2$ is written as
$\sfR_2=\sfD_1+(\sfH_x\wedge \sfH_{x_-}\wedge \sfH_{x_+})\circ \theta_{\sfD_1}$.
This allows us to bound the left-hand side of~\eqref{return-prob} as
\begin{equation}
\begin{split}
 P_x(\sfR_2 \ge r)
& \le P_x(\sfD_1 \ge r)
 +\max_{z\in \{x-1,\, x+1\}}P_z(\sfH_x\ge r)
1_{\{r\le t^{{1-\epsilon}}\}}\\
&\quad + P_x(\sfH_{x_-}\wedge \sfH_{x_+} \ge r)
1_{\{r> t^{{1-\epsilon}}\}}.
\end{split}
\end{equation}
By~\eqref{cluster<4}, the first term is bounded by $e^{-cr}$ and negligible compared with the right-hand side
of~\eqref{return-prob}. The probability in the second term is asymptotic to $(\pi r)^{-1/2}$ (see~\cite[Lemma~1 and (2.4) in Chapter III]{Fel68} for the discrete time analogue). 
The probability in the third term is for the random walk to stay in $(x_-,x_+)$ 
for a time duration $r$, which decays as 
\begin{equation}
P_x(\sfH_{x_-}\wedge \sfH_{x_+} \ge r)
\le \exp\left\{-c {r \over (x_+-x_-)^2}\right\}.
\label{stay-in-x+-}
\end{equation}
Combined with~\eqref{max-discrep}, this concludes the proof of~\eqref{return-prob}. 

Finally, the first assertion~\eqref{hit-H} follows in the same way as~\eqref{stay-in-x+-}. 
\end{proof}
\begin{proof}[Proof of Proposition~\ref{local-time}]
In view of Lemma~\ref{discrepancy}, we assume that~\eqref{max-discrep} holds
throughout the proof. 
Then Lemma~\ref{exit} yields that
\begin{equation}
\begin{split}
P_0\left(N_t< t^{{1\over 2}+\epsilon}\right)
&=P_0\left(\sfR_{\lfloor t^{{1/ 2}+\epsilon}\rfloor+1} \ge t\right)\\
&\le P_0\left(\sfR_{\lfloor t^{{1/2}+\epsilon}\rfloor+1}\wedge\sfT \ge t\right)
 +\exp\{-ct^\epsilon\}.
\end{split}
\end{equation}
Recalling $\sfR_0=0$, we rewrite the first term as
\begin{equation}
\begin{split}
&P_0\left(\sfR_{\lfloor t^{{1/2}+\epsilon}\rfloor+1}\wedge\sfT \ge t\right)\\
&\quad = P_0\left(\sum_{k\le {t^{{1/2}+\epsilon}}}(\sfR_{{k+1}}\wedge\sfT-\sfR_{{k}}\wedge\sfT) \ge t\right)\\
&\quad {= P_0\left(\sfR_1 \ge \tfrac{t}{2} \right)
+P_0\left(\sum_{1\le k\le t^{{1/2}+\epsilon}}(\sfR_{k+1}\wedge\sfT-\sfR_k\wedge\sfT) \ge \tfrac{t}{2}\right)}.
\end{split}
\end{equation}
The first term is bounded by $\exp\{-ct^{\epsilon}\}$ by~\eqref{hit-H}. By the strong Markov property and Lemma~\ref{return}, the summands in the right-hand side are stochastically dominated by a family of independent and 
identically distributed random variables $\{\mu_k\}_{k< t^{{1/2}+\epsilon}+1}$
whose tail distribution is given by the right-hand side of~\eqref{return-prob}. 
We assume that this family is defined on the same probability space as the 
random walk with the measure $P_0$. 
Then, the above right-hand side is bounded by
\begin{equation}
\begin{split}
&P_0\left(\sum_{{1\le k\le t^{{1/2}+\epsilon}}}\mu_k \ge t\right)\\
&\quad\le P_0\left(\sum_{{1\le k\le t^{{1/2}+\epsilon}}}\mu_k{\cdot 1_{\{\mu_k \le t^{1-\epsilon}\}}}\ge t\right)
+P_0\left(\max_{{1\le k\le t^{{1/2}+\epsilon}}}\mu_k > t^{1-\epsilon}\right)\\
&\quad\le P_0\left(\sum_{{1\le k\le t^{{1/2}+\epsilon}}}\mu_k{\cdot 1_{\{\mu_k \le t^{1-\epsilon}\}}}\ge t\right)+3t^{{1\over 2}+\epsilon}\exp\{-c t^\epsilon\}.
\end{split}
\end{equation} 
In order to bound the first term, we use a large deviation principle for truncated sums proved in~\cite{Cha12}. We summarize the statement for the reader's convenience. Let $(\{H_k\}_{k\in\N}, \mathbf{P})$ be $\R^d$-valued independent and identically distributed random variables with \emph{power law tail} (see~\cite[(1.1)]{Cha12} for the definition), and let $M_n>0$ be a sequence satisfying $\lim_{n\to\infty}n\mathbf{P}(|H_1| > M_n)=\infty$. Then the truncated sum
\begin{equation}
 \frac{1}{nM_n\mathbf{P}(|H_1|>M_n)}\sum_{k=1}^n H_k\cdot 1_{\{|H_k| \le M_n\}}
\end{equation}
satisfies a large deviation principle with speed $n\mathbf{P}(|H_1|>M_n)$ whose rate function vanishes only at zero. We apply this theorem with the choice
\begin{equation}
 n=\lfloor t^{\frac12+\epsilon}\rfloor,\ M_n=t^{1-\epsilon}, \text{ and } \mathbf{P}(H_k>r)=Cr^{-{1\over 2}}\wedge 1,
\end{equation}
so that $H_k\cdot 1_{\{|H_k|\le M_n\}}$ has the same law as $\mu_k\cdot 1_{\{\mu_k \le t^{1-\epsilon}\}}$. Then~\cite[Theorem~3.1]{Cha12} yields 
\begin{equation}
 P_0\left(\sum_{1\le k\le t^{{1/2}+\epsilon}}\mu_k{\cdot 1_{\{\mu_k \le t^{1-\epsilon}\}}}\ge t\right)
\le \exp\{-c t^\epsilon\}.
\end{equation}
Therefore we conclude that when $\epsilon>0$ is small, 
\begin{equation}
P_0\left(N_t< t^{{1\over 2}+\epsilon}\right)
\le \exp\set{-ct^{\epsilon}} 
\end{equation}
for all sufficiently large $t$. 

Finally, on the event $\{N_t\ge t^{1/2+\epsilon}\}$, we have 
\begin{equation}
 \ell_t(\calH_{{1/2\alpha-\epsilon}}) \ge \sum_{k\le t^{1/2+\epsilon}-1}(\sfD_k-\sfR_k),
\end{equation} 
whose right-hand side is bounded from below by a sum of the independent exponential random variables with rate one. 
Therefore, a simple large deviation bound leads us to
\begin{equation}
\begin{split}
&P_0\left(\ell_t(\calH_{{1/2\alpha-\epsilon}}) \le t^{{1\over 2}-\epsilon}\right)\\
&\quad\le P_0\left(N_t < t^{{1\over 2}+\epsilon}\right)
+P_0\left(\sum_{k\le t^{1/2+\epsilon}-1}(\sfD_k-\sfR_k)\le t^{{1\over 2}-\epsilon}\right)\\
&\quad\le \exp\set{-ct^\epsilon}
\end{split}
\end{equation}
and we are done. 
\end{proof}
\subsection{Upper bound for $\alpha<1$ and $d\ge 2$}
In this case, we have $s(d,\alpha)=1/\alpha$. 
To prove Proposition~\ref{prop:RWRSlower} in this case, we introduce the \emph{relevant} level set:
\begin{equation}
\calH_{{*}}=\set{x\in \Z^d \colon z(x)\ge t^{{1\over \alpha}-\epsilon}}.
\end{equation}
This is effectively the same as $\calH_{1/\alpha-\epsilon}$ used before, but we will take intersection with $[-t,t]^d$ instead of $[-t^{1/2+\epsilon},t^{1/2+\epsilon}]^d$ to simplify the notation below.
Then it suffice to prove the following proposition: 
\begin{proposition}
 \label{local-time3} 
Let $d \ge 2$. For sufficiently small $\epsilon>0$ depending on $\alpha$, there exists $c(\epsilon)>0$ such that $\P$-almost surely, 
\begin{equation}
 P_0\left(\ell_t(\calH_{{*}})\le 1\right)
\le \exp\{-t^{c(\epsilon)}\}
\end{equation}
for all sufficiently large $t$. 
\end{proposition}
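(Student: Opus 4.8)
The plan is to pass to the Laplace transform of $\ell_t(\calH_*)$ and to exploit that, when $\alpha<1$, the set $\calH_*$ is so dense that, in the region the walk can reach by time $t$, it leaves no large empty ball. By Chebyshev's inequality $P_0(\ell_t(\calH_*)\le 1)\le e\,E_0[\exp\{-\ell_t(\calH_*)\}]$, so it suffices to bound the Feynman--Kac functional $E_0[\exp\{-\ell_t(\calH_*)\}]$. Fix the scales $R_t=t^{(1-\alpha\epsilon/2)/d}$ and $L=t^{1/2+\epsilon}$; note $R_t\ll t^{1/2}$ precisely because $d\ge2$ and $\alpha<1$ (for $d=1$ this would fail, whence the separate treatment of that case). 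By~\eqref{ass-tail}, $\P(z(0)\ge t^{1/\alpha-\epsilon})=t^{-1+\alpha\epsilon+o(1)}$, so a fixed ball of radius $R_t$ is disjoint from $\calH_*$ with probability at most $\exp\{-c\,t^{-1+\alpha\epsilon+o(1)}R_t^{\,d}\}=\exp\{-t^{\alpha\epsilon/2+o(1)}\}$, which is super-polynomially small. A union bound over the $O(L^{d})$ relevant balls together with the Borel--Cantelli lemma (applied along $t=2^n$, the extension to general $t$ being routine since the exponents leave ample room) shows that $\P$-almost surely, for all large $t$, \emph{every} ball of radius $R_t$ contained in $B(0,2L)$ meets $\calH_*$. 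We fix such a realization of $z$ from now on.

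Next, decompose the trajectory into fresh-ball cycles: set $\nu_0=0$ and, for a large dimensional constant $K$, $\nu_k=\nu_{k-1}+\inf\{s\ge0\colon S_{\nu_{k-1}+s}\notin D_k\}$ with $D_k=B(S_{\nu_{k-1}},KR_t)$, and write $\ell_k$ for the time spent in $\calH_*$ during the $k$-th cycle $[\nu_{k-1},\nu_k]$. Whenever $S_{\nu_{k-1}}\in B(0,L)$, the structural fact produces $y_k\in\calH_*\cap B(S_{\nu_{k-1}},R_t)$, which lies well inside $D_k$; if the walk hits $y_k$ before exiting $D_k$ it sits there for its holding time (exponential of rate $2d$), so $\ell_k$ dominates that holding time on $\{\sfH_{y_k}<\sfH_{\partial D_k}\}$, and hence $E[e^{-\ell_k}\mid\mathcal F_{\nu_{k-1}}]\le 1-c\,P_{S_{\nu_{k-1}}}(\sfH_{y_k}<\sfH_{\partial D_k})$. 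By the last-exit decomposition this hitting probability equals $g_{D_k}(S_{\nu_{k-1}},y_k)/g_{D_k}(y_k,y_k)$, with $g_{D_k}$ the Green function of the walk killed on leaving $D_k$; since $|S_{\nu_{k-1}}-y_k|\le R_t$ while $\mathrm{dist}(y_k,\partial D_k)\ge (K-1)R_t$, standard estimates give a lower bound $\gtrsim\phi_d(R_t)$, where $\phi_d(R)=R^{2-d}$ for $d\ge3$ and $\phi_d(R)=1/\log R$ for $d=2$. Iterating over cycles by the strong Markov property,
\begin{equation*}
E_0\!\left[e^{-\sum_{k\le M}\ell_k}\,;\,S_{\nu_j}\in B(0,L)\ \text{for}\ j<M\right]\le\bigl(1-c\,\phi_d(R_t)\bigr)^{M}.
\end{equation*}

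Finally, take $M=\lfloor c_0\,tR_t^{-2}\rfloor$ with $c_0$ a small constant. Each cycle length has an exponential tail at scale $(KR_t)^2$, so a Chernoff bound gives $P_0(\nu_M>t)\le\exp\{-c\,tR_t^{-2}\}$ for $c_0$ small, while $P_0(\max_{u\le t}|S_u|>L)\le\exp\{-c\,t^{2\epsilon}\}$ by the reflection principle and Lemma~\ref{lem:RWHK}. Splitting according to whether $\nu_M\le t$ and whether the walk stays in $B(0,L)$, and using $e^{-\ell_t(\calH_*)}\le e^{-\sum_{k\le M}\ell_k}$ on $\{\nu_M\le t\}$,
\begin{equation*}
E_0\!\left[e^{-\ell_t(\calH_*)}\right]\le\bigl(1-c\,\phi_d(R_t)\bigr)^{M}+P_0(\nu_M>t)+P_0\!\left(\max_{u\le t}|S_u|>L\right).
\end{equation*}
Here $M\,\phi_d(R_t)\gtrsim t\,R_t^{-d}=t^{\alpha\epsilon/2}$ when $d\ge3$ and $\gtrsim tR_t^{-2}/\log R_t=t^{\alpha\epsilon/2}/\log R_t$ when $d=2$, while $tR_t^{-2}=t^{1-2(1-\alpha\epsilon/2)/d}$ has positive exponent ($\ge1-2/d>0$ for $d\ge3$, equal to $\alpha\epsilon/2$ for $d=2$); hence all three terms are at most $\exp\{-t^{c(\epsilon)}\}$ for a suitable $c(\epsilon)>0$ (e.g.\ $c(\epsilon)=\alpha\epsilon/4$ for $\epsilon$ small) once $t$ is large, which is the claim. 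The main obstacle is the per-cycle estimate in dimensions $d\ge3$: transience makes the single-cycle hitting probability only of order $R_t^{2-d}\to0$, so one must verify that the product over the $\asymp tR_t^{-2}$ cycles still beats every power of $t$; this works precisely because $\alpha<1$ forces $\calH_*$ to be dense enough that $R_t$ can be taken to be a power of $t$ strictly below $t^{1/d}$. (Alternatively one could apply Sznitman's method of enlargement of obstacles~\cite{Szn98} to the soft obstacle potential $1_{\calH_*}$, obtaining the sharper rate $\exp\{-c\,tR_t^{-2}(\log t)^{-\mathrm{const}}\}$ in all dimensions; the elementary argument above already gives the stated bound.)
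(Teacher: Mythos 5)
Your proof is correct, but it follows a genuinely different route from the paper's. Both arguments begin from the same geometric input: by the tail assumption and a Borel--Cantelli argument, $\P$-a.s.\ every ball of radius $R_t\approx t^{(1-\alpha\epsilon')/d}$ (for a small $\epsilon'>0$) in the range accessible by time $t$ meets $\calH_*$, and this scale is $\ll t^{1/2}$ precisely when $d\ge 2$ (your observation that the argument collapses for $d=1$ matches the paper's separate treatment of that case). From there the two proofs diverge. The paper invokes Sznitman's method of enlargement of obstacles (Lemma~\ref{lem:EveryBox} feeding into the density set $\mathcal D_r(\calH_*)$, then Theorem~2.3 on p.~158 of~\cite{Szn98}) to produce a lower bound on the Dirichlet eigenvalue $\lambda_1([-t,t]^d\setminus\calH_*)$, converts it into a semigroup/survival bound (Lemma~\ref{lem:avoid-H}), and then counts returns to $\calH_*$ via a departure/return decomposition (Lemmas~\ref{lem:cluster} and~\ref{number-hit-H}). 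You instead pass to the Laplace transform $E_0[e^{-\ell_t(\calH_*)}]$ directly and run a fresh-ball renewal decomposition at spatial scale $KR_t$: each cycle presents a target in $\calH_*\cap B(S_{\nu_{k-1}},R_t)$ that is hit before exiting the cycle ball with probability $\gtrsim R_t^{2-d}$ ($d\ge3$) or $\gtrsim 1/\log R_t$ ($d=2$), and each hit costs a multiplicative factor bounded away from $1$ (via the holding time $\mathrm{Exp}(2d)$), so iterating over $M\asymp tR_t^{-2}$ cycles yields a stretched-exponentially small Laplace transform. The MEO route is heavier machinery but delivers the sharper decay $\exp\{-ctR_t^{-2}(\log t)^{-\mathrm{const}}\}$; your Green-function/cycle argument is self-contained, uses only standard killed-walk estimates, and still beats every power of $t$ because $R_t^d\ll t$ guarantees $M\,\phi_d(R_t)\gtrsim t^{\alpha\epsilon'}\,$ up to logarithms. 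You also correctly sidestep the cluster-size control the paper needs (Lemma~\ref{lem:cluster}), since a single holding time suffices for your Laplace-transform bound. The minor points you wave through — the dyadic-to-general-$t$ interpolation in the Borel--Cantelli step (using the monotonicity $\calH_*(2^{n+1})\subset\calH_*(t)$ for $t\le 2^{n+1}$ together with a slightly enlarged ball), and the choice of $K$ large enough so the killed Green function comparison survives when $|S_{\nu_{k-1}}-y_k|$ is comparable to $R_t$ in $d\ge 3$ — are indeed routine.
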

We first estimate the probability that the random walk avoids $\calH_{{*}}$ while staying inside a large box $[-t,t]^d$. Let us denote the exit time from this box by
\begin{equation}
 \sfT_{{*}}=\inf\left\{u\ge 0\colon S_u \not\in \left[-t, t\right]^d \right\}.
\end{equation}
\begin{lemma}
\label{lem:avoid-H}
Let $d\ge 2$. For sufficiently small $\epsilon>0$ depending on $\alpha$, there exists $c(\epsilon)>0$ such that $\P$-almost surely, 
\begin{equation}
\max_{x\in[-t, t]^d} P_x\left(\sfH_{\calH_{{*}}}\wedge \sfT_{{*}}>t^{1-\frac{\alpha\epsilon}{8d}}\right)
\le \exp\left\{-t^{c(\epsilon)}\right\}
\end{equation}
for all sufficiently large $t$.
\end{lemma}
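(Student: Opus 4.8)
\textbf{Proof plan for Lemma~\ref{lem:avoid-H}.}
The strategy is to use Sznitman's method of enlargement of obstacles to turn the problem of avoiding the sparse random set $\calH_*$ into a coarse-grained problem of avoiding a nontrivial density of ``hard'' obstacles, at which point a standard eigenvalue estimate gives the stretched-exponential decay. First I would record that, by \eqref{ass-tail}, $\P(x\in\calH_*)=t^{-(1-\alpha\epsilon)+o(1)}$, so inside $[-t,t]^d$ the set $\calH_*$ contains about $t^{d}\cdot t^{-(1-\alpha\epsilon)+o(1)}=t^{d-1+\alpha\epsilon+o(1)}$ points: sparse on the microscopic scale but, crucially, dense on a suitable mesoscopic scale. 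Choose a length scale $L=t^{\beta}$ with $\beta$ slightly less than $\frac1d(1-\alpha\epsilon)$ (so $\beta<\frac1d$), and tile $[-t,t]^d$ into boxes of side $L$. By a Borel--Cantelli / union-bound argument over the $O((t/L)^d)$ boxes, $\P$-almost surely every such box contains at least one point of $\calH_*$ for all large $t$, since the probability that a fixed box misses $\calH_*$ is $(1-t^{-(1-\alpha\epsilon)+o(1)})^{L^d}=\exp\{-t^{d\beta-(1-\alpha\epsilon)+o(1)}\}$ and $d\beta-(1-\alpha\epsilon)>0$ by the choice of $\beta$ — wait, I need $d\beta>1-\alpha\epsilon$, so in fact I take $\beta$ slightly \emph{larger} than $\frac1d(1-\alpha\epsilon)$ while still keeping $\beta<\frac1d$, which is possible precisely because $\alpha\epsilon>0$; the number of boxes is a polynomial in $t$, so the union bound survives and Borel--Cantelli applies along $t=2^n$ with the usual monotonicity interpolation.

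Next I would carry out the coarse-graining. On the mesoscopic lattice $L\Z^d$ restricted to $[-t,t]^d$, the event that the walk stays in $[-t,t]^d$ and avoids $\calH_*$ up to time $s$ forces the walk to avoid, in each visited $L$-box, the nonempty set of obstacle points guaranteed above. Comparing with the Dirichlet problem: let $U=[-t,t]^d\setminus\calH_*$ and let $\lambda_U$ be the principal Dirichlet eigenvalue of the (continuous-time) generator on $U$. Standard spectral theory (the same kind of bound used e.g. via Lemma~\ref{lem:RWHK} plus a Dirichlet-form comparison) gives
\begin{equation}
P_x\bigl(\sfH_{\calH_*}\wedge\sfT_*>s\bigr)\le C\,e^{-s\lambda_U}
\end{equation}
for $x\in[-t,t]^d$, with $C$ at most polynomial in $t$. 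It therefore suffices to show $\lambda_U\ge t^{-2\beta-o(1)}$, i.e. $\lambda_U\gtrsim L^{-2}$ up to $t^{o(1)}$ corrections. This is exactly where the enlargement of obstacles enters: because each mesoscopic box of side $L$ meets $\calH_*$, the ``clearing'' (the largest obstacle-free region) has diameter $O(L)$ on every scale, and Sznitman's theory (as in \cite{Szn98}, in the form adapted to discrete random walks) yields that $\lambda_U$ is bounded below by $c\,L^{-2}$ up to a factor $t^{o(1)}$ — equivalently, the principal eigenvalue cannot be small because there is no large obstacle-free pocket. Concretely one applies the lower bound on the principal eigenvalue in terms of the capacity/density of obstacles on the relevant scale; the mesoscopic density estimate from the previous paragraph is the input that makes this quantitative.

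Finally, assembling the pieces: with $\lambda_U\ge t^{-2\beta-o(1)}$ and $s=t^{1-\frac{\alpha\epsilon}{8d}}$, we get
\begin{equation}
P_x\bigl(\sfH_{\calH_*}\wedge\sfT_*>t^{1-\frac{\alpha\epsilon}{8d}}\bigr)\le C\,\exp\bigl\{-t^{1-\frac{\alpha\epsilon}{8d}-2\beta-o(1)}\bigr\},
\end{equation}
and since $\beta$ can be taken as close to $\frac1d(1-\alpha\epsilon)$ as we like — in particular small once $\epsilon$ is small, so that $2\beta<\frac12<1-\frac{\alpha\epsilon}{8d}$ — the exponent $1-\frac{\alpha\epsilon}{8d}-2\beta$ is a positive constant $c(\epsilon)>0$, and the maximum over $x\in[-t,t]^d$ of a polynomial-in-$t$ number of such terms is still $\exp\{-t^{c(\epsilon)}\}$ for large $t$. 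The main obstacle I anticipate is making the eigenvalue lower bound genuinely quantitative in the discrete random-walk setting: one must check that the method of enlargement of obstacles of \cite{Szn98}, which is classically stated for Brownian motion among Poissonian obstacles, transfers to the simple random walk among the deterministic (given $z$) mesoscopically-dense set $\calH_*$, and that the $t^{o(1)}$ losses in passing from the density estimate to the eigenvalue bound are harmless. An alternative that sidesteps the heaviest machinery would be a direct chaining/blocking argument: partition $[0,s]$ into $\sim s/L^2$ time-windows of length $L^2$, show that in each window the walk has probability bounded away from $1$ (uniformly in the starting point inside $[-t,t]^d$, using that some obstacle point sits within distance $O(L)$) of hitting $\calH_*$ before leaving $[-t,t]^d$, and multiply; this gives the same conclusion with more elementary tools, at the cost of slightly worse constants.
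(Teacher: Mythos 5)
Your strategy matches the paper's in outline (density estimate at a mesoscopic scale $\to$ method of enlargement of obstacles $\to$ principal-eigenvalue lower bound $\to$ semigroup/spectral bound), but the central quantitative step — the claimed eigenvalue lower bound $\lambda_U\gtrsim L^{-2}$ with $L\approx t^{(1-\alpha\epsilon)/d}$ — is incorrect for $d\ge 3$, and this is not a removable slip but a misreading of what the MEO delivers. Knowing that every box of side $L$ contains a point of $\calH_*$ does \emph{not} force $\lambda_1([-t,t]^d\setminus\calH_*)\gtrsim L^{-2}$, because in $d\ge 3$ a single lattice point has bounded capacity, and the ``cost'' of avoiding it inside an $L$-box is only $\asymp L^{-(d-2)}$ per unit time window $L^2$, giving an eigenvalue of order $L^{-d}$, not $L^{-2}$. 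In fact $\lambda_U\lesssim L^{-d}\approx t^{-(1-\alpha\epsilon)}$ is easy to see directly by inserting the test function that is $1$ away from $\calH_*\cup\partial[-t,t]^d$ and transitions to $0$ near each obstacle point: the energy is dominated by $|\calH_*|\cdot\text{cap}(\text{point})\asymp t^{d-1+\alpha\epsilon}$ and the $L^2$ norm is $\asymp t^d$. So your claimed $\lambda_U\gtrsim L^{-2}=t^{-2(1-\alpha\epsilon)/d}$ would exceed this upper bound for $d\ge 3$ and is therefore false. The paper's proof is careful precisely here: it introduces \emph{two} scales, a coarse scale $r=t^{1/2-\delta}$ (at which the cube is eventually solidified, giving $\lambda_1\gtrsim r^{-2}$) and a mesoscopic scale of side $\approx r^{1-\gamma}$ with $\gamma=(d-2)/d+\delta$, chosen so that the volume of a mesoscopic cell ($\asymp r^{-(d-2)-d\delta}$) is dominated by the capacity of a single point ($\asymp r^{-(d-2)}$); this is the ``overcrowded''/constant-capacity condition that Sznitman's Theorem 2.3 actually requires. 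Your argument checks only nonemptiness and skips the capacity condition entirely, which is why you get the wrong scaling.

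Your suggested alternative blocking argument inherits the same flaw: starting from an arbitrary point of $[-t,t]^d$ and running for time $L^2$, the probability of hitting a single nearby obstacle point is $\asymp L^{-(d-2)}$ for $d\ge 3$, which is \emph{not} bounded away from $0$; so the per-window hitting probability does not lead to the product bound you want. Finally, your closing numerics ``$2\beta<\tfrac12$'' is also wrong when $d=2$ (there $2\beta\approx 1-\alpha\epsilon$), though that particular slip is harmless.

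It is worth noting that the correct eigenvalue order $\lambda_U\gtrsim t^{-1+o(1)}$ (whether via $L^{-d}$ or the paper's $r^{-2}$) still closes the argument, since $s\lambda_U = t^{1-\frac{\alpha\epsilon}{8d}}\cdot t^{-1+\alpha\epsilon+o(1)}=t^{\alpha\epsilon(1-\frac{1}{8d})+o(1)}\to\infty$. So the overall plan is salvageable, but as written the proof rests on an incorrect eigenvalue scaling and does not verify the hypotheses under which the method of enlargement of obstacles applies; both must be fixed, essentially by reproducing the paper's two-scale setup.
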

\begin{proof}
We use the ``method of enlargement of obstacles'' in~\cite{Szn98} with a slight modification made in~\cite{Fuk09a}. 
Strictly speaking, the method in~\cite{Szn98,Fuk09a} is developed in the continuum setting but it can be adapted to the discrete setting as is done in~\cite{BAR00}. 

Let us choose the parameters in~\cite{Fuk09a}. First, let us define the scale $r=t^{1/2-\delta}$ with $\delta=\alpha\epsilon/(4+d)$ and divide $[-t,t]^d$ into the boxes of the form $r(q+[0,1)^d)$ ($q\in \Z^d$). Then we scale the space by $r^{-1}$ so that we have the box $[-t/r,t/r]^d$ divided into unit cubes. Next, let  $\gamma=(d-2)/d+\delta$ and divide the unit cubes into dyadic boxes (which we call \emph{mesoscopic cells}) with side length $2^{-n_\gamma}\in [r^{-\gamma},2r^{-\gamma})$. 
\begin{lemma}
\label{lem:EveryBox}
Let $d\ge 2$. Then $\P$-almost surely, for all sufficiently large $t$, 
every $2^{-n_\gamma-1}(q+[0,1]^d)$ intersecting $[-t/r,t/r]^d$ contains a point of $r^{-1}\calH_{{*}}$. 
\end{lemma}
\begin{proof}
Observe that every mesoscopic cell $2^{-n_\gamma-1}(q+[0,1]^d)$ ($q\in \Z^d$) contains at least $2^{-d}r^{d(1-\gamma)}\ge t^{1-(4+d)\delta/2}$ points of $r^{-1}\Z^d$ for sufficiently large $t$. Since 
\begin{equation}
\P(x\in \calH_{{*}})=\P\left(z(x)\ge t^{\frac{1}{\alpha}-\epsilon}\right)
=t^{-1+\alpha\epsilon} \textrm{ as }t\to\infty, 
\end{equation}
it follows that 
\begin{equation}
\begin{split}
\P\left((q+[0,1)^d) \cap r^{-1}\calH_{{*}}=\emptyset\right)
&\le (1-t^{-1+\alpha\epsilon})^{t^{1-(4+d)\delta/2}}\\
&\le \exp\{-t^{\alpha\epsilon/2}\}
\end{split}
\end{equation}
for all sufficiently large $t$. Since there are only polynomially many mesoscopic cells intersecting $[-t/r,t/r]^d$, the assertion follows by the union bound and the Borel--Cantelli lemma. 
\end{proof}
If all the mesoscopic cells in a unit cube $q+[0,1)^d$ contains a point of $r^{-1}\calH_{{*}}$, then since the volume of each cell ($\asymp r^{-(d-2)-d\delta}$) is much smaller than the capacity of a point in the scaled lattice ($\asymp r^{-(d-2)}$), the unit cube $q+[0,1)^d$ is more crowded than the so-called ``constant capacity regime'' in the crushed ice problem, see~\cite[p.116]{Szn98}. Roughly speaking, the method of enlargement of obstacles in~\cite{Szn98} allows us to \emph{solidify} such a unit cube when we consider the principal eigenvalue $\lambda_1(U)$ of $-\tfrac{1}{2d}\Delta$ in $U\subset\Z^d$ with the Dirichlet boundary condition. More precisely, such a cube is contained in the \emph{density} set $\mathcal{D}_r(\calH_{{*}})$ defined in~\cite[p.152]{Szn98} and hence it follows from~\cite[Theorem~2.3 on p.158]{Szn98} that there exists $\rho>0$ such that for all $M>0$ and sufficiently large $t$, 
\begin{equation}
\label{eq:spectral}
\left|\lambda_1([-t/r,t/r]^d\setminus r^{-1}\calH_{{*}})\wedge M-\lambda_1([-t/r,t/r]^d\setminus {\mathcal{D}}_r(\calH_{{*}}))\wedge M\right|\le r^{-\rho}.
\end{equation}

In our setting, Lemma~\ref{lem:EveryBox} and~\cite[Proposition~2.7]{Fuk09a} imply that every $q+[0,1)^d$ intersecting $[-t/r,t/r]^d$ belongs to $\mathcal{D}_r(\calH_{{*}})$ and hence $[-t/r,t/r]^d\setminus \mathcal{D}_r(\calH_{{*}})=\emptyset$. Therefore~\eqref{eq:spectral} implies that $\lambda_1([-t/r,t/r]^d\setminus r^{-1}\calH_{{*}})\wedge M$ is arbitrarily close to $M>0$. Reverting the scaling, we conclude that for any $M>0$, $\P$-almost surely, 
\begin{equation}
\begin{split}
  \lambda_1([-t,t]^d\setminus \calH_{{*}})&\ge Mr^{-2}\\
&=Mt^{-1+\frac{2\alpha\epsilon}{4+d}}
\end{split}
\end{equation}
for all sufficiently large $t$. By using this eigenvalue bound together with a standard semigroup bound~\cite[(eq.~(2.21))]{Kon16}, we obtain
\begin{equation}
\label{eq:avoid-H2}
\begin{split}
\sup_{x\in[-t,t]^d}P_x\left(\sfH_{\calH_{{*}}}\wedge \sfT_{{*}}>t^{1-\frac{\alpha\epsilon}{8d}}\right)
& \le (2t)^{d/2}\exp\left\{-t^{1-\frac{\alpha\epsilon}{8d}}\lambda_1([-t,t]^d\setminus \calH_{{*}})\right\}\\
& \le \exp\left\{-t^{c(\epsilon)}\right\}.
\end{split}
\end{equation}
This completes the proof of Lemma~\ref{lem:avoid-H}.
\end{proof}
We also need the following sparsity result which can be proved in the same way as~\eqref{cluster<4}.
\begin{lemma}
\label{lem:cluster}
For $d\ge 2$, $\P$-almost surely, 
every connected component of $\calH_* \cap [-t,t]^d$ contains at most $d+2$ points for all sufficiently large $t$.  
\end{lemma}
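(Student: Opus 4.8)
The plan is to prove Lemma~\ref{lem:cluster} by the same union bound and Borel--Cantelli argument that established~\eqref{cluster<4}, adapted to dimension $d\ge 2$. First I would use the tail assumption~\eqref{ass-tail} to record the one-point estimate
\[
 \P(x\in \calH_*)=\P\bigl(z(x)\ge t^{\frac1\alpha-\epsilon}\bigr)=t^{-1+\alpha\epsilon+o(1)}
\]
as $t\to\infty$, exactly as in Lemma~\ref{lem:EveryBox}. Next, if a connected component of $\calH_*\cap[-t,t]^d$ contains at least $d+3$ points, then in particular it contains a connected subset (a lattice animal) of exactly $d+3$ sites, all of which lie in $\calH_*$. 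Since the $z(x)$ are independent, the probability that a fixed such animal is contained in $\calH_*$ is $(t^{-1+\alpha\epsilon+o(1)})^{d+3}=t^{-(d+3)+o(1)}$ for small $\epsilon$.

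The key counting input is that the number of connected lattice animals of size $d+3$ that meet $[-t,t]^d$ is at most $C t^d$: there are $(2t+1)^d$ choices for, say, the lexicographically smallest vertex, and then only a bounded number (depending on $d$) of animal shapes of fixed size $d+3$. Hence by the union bound,
\[
 \P\bigl(\exists\text{ a connected component of }\calH_*\cap[-t,t]^d\text{ with }\ge d+3\text{ points}\bigr)
 \le C t^d\cdot t^{-(d+3)+o(1)}=t^{-3+o(1)},
\]
provided $\epsilon>0$ is chosen small enough that the $o(1)$ loss does not overcome the gain. Evaluating this along $t=2^n$, the right-hand side is summable in $n$, so the Borel--Cantelli lemma gives that $\P$-almost surely, for all large $n$, every connected component of $\calH_*\cap[-2^n,2^n]^d$ has at most $d+2$ points. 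Finally, for general large $t$ pick $n$ with $2^n\le t<2^{n+1}$; since a connected component of $\calH_*\cap[-t,t]^d$ of size $\ge d+3$ would, if translated to be anchored near the origin, or simply by monotonicity of the boxes, produce one inside $[-2^{n+1},2^{n+1}]^d$, the claim for all large $t$ follows. (One should be mildly careful here: a component inside $[-t,t]^d$ need not sit inside $[-2^n,2^n]^d$, but it does sit inside $[-2^{n+1},2^{n+1}]^d$, so applying the almost sure statement at scale $2^{n+1}$ suffices.)

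The only real subtlety — and the step I would state carefully — is the choice of $\epsilon$: the exponent in the union bound is $d-(d+3)(1-\alpha\epsilon)+o(1)=-3+3\alpha\epsilon+o(1)$, which is negative and bounded away from zero precisely when $\alpha\epsilon<1$ and the $o(1)$ is eventually small, i.e.\ for all sufficiently small $\epsilon>0$ depending on $\alpha$; this matches the hypothesis of the lemma. Everything else is routine, and indeed the proof is identical in structure to that of~\eqref{cluster<4}, with ``interval of length $4$ in $\Z$'' replaced by ``connected animal of $d+3$ sites in $\Z^d$'' and the one-point probability updated from $t^{-1/2+\alpha\epsilon+o(1)}$ to $t^{-1+\alpha\epsilon+o(1)}$.
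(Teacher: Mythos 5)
Your approach is exactly what the paper intends: the paper proves the lemma only by the remark that it ``can be proved in the same way as~\eqref{cluster<4},'' and your argument---one-point probability $t^{-1+\alpha\epsilon+o(1)}$ for $\calH_*$, union bound over the $O(t^d)$ lattice animals of $d+3$ sites meeting $[-t,t]^d$, then Borel--Cantelli---is precisely that adaptation, with the correct sparsity exponent. Two small points deserve a fix. First, a minor arithmetic slip: $d-(d+3)(1-\alpha\epsilon)=-3+(d+3)\alpha\epsilon$, not $-3+3\alpha\epsilon$; this changes nothing since the exponent is still $<-1$ for $\epsilon$ small. Second, and slightly more substantive, your interpolation from the dyadic sequence to general $t$ handles the monotonicity of the boxes but not the $t$-dependence of the level set. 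Since $\calH_*(t)=\{z(x)\ge t^{1/\alpha-\epsilon}\}$, for $2^n\le t<2^{n+1}$ we have $\calH_*(2^{n+1})\subset\calH_*(t)\subset\calH_*(2^n)$, so a component of $\calH_*(t)\cap[-t,t]^d$ is \emph{not} contained in a component of $\calH_*(2^{n+1})\cap[-2^{n+1},2^{n+1}]^d$ as you claim---the inclusion goes the wrong way on the level set. The correct statement is that such a component sits inside $\calH_*(2^n)\cap[-2^{n+1},2^{n+1}]^d$, so one should run Borel--Cantelli on the mixed-scale event $\{\exists\,$component of $\calH_*(2^n)\cap[-2^{n+1},2^{n+1}]^d$ of size $\ge d+3\}$, which has probability of the same order $\asymp 2^{-n(3-(d+3)\alpha\epsilon)}$ and is still summable. (Alternatively, since the bound $t^{-3+C\epsilon}$ is already summable over integers $t$, one can dispense with the dyadic subsequence altogether and bound $\calH_*(t)\cap[-t,t]^d$ by $\calH_*(\lfloor t\rfloor)\cap[-\lceil t\rceil,\lceil t\rceil]^d$.)
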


Let us define the times of returns to/departures from $\calH_{{*}}$: $\sfR_0=\sfD_0=0$ and for $k\ge 1$,  
\begin{align}
\sfR_k&=\inf\set{u\ge \sfD_{k-1}\colon S_u\in\calH_{{*}}},\\
\sfD_k&=\inf\set{u\ge \sfR_k\colon S_u\not\in\calH_{{*}}}.
\end{align}
We have the following bound on the number of returns before time $t$ defined by\begin{equation}
 N_t=\sup\set{k\ge 1\colon \sfR_k <t}.
\end{equation}
\begin{lemma}
\label{number-hit-H}
Let $d\ge 2$. Then there exists $c(\epsilon)>0$ such that $\P$-almost surely, 
for all sufficiently large $t$, 
\begin{equation}
 P_0\left(N_t\le t^{{\alpha\epsilon \over 16d}}\right)
\le \exp\left\{-t^{c(\epsilon)}\right\}.
\end{equation}
\end{lemma}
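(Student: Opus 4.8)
\textbf{Proof proposal for Lemma~\ref{number-hit-H}.}
The plan is to show that if the number of returns $N_t$ is small, then the random walk must have spent an atypically long time between two consecutive visits to $\calH_*$, which has stretched-exponentially small probability by Lemma~\ref{lem:avoid-H}. First I would fix the time scale $\tau=t^{1-\frac{\alpha\epsilon}{8d}}$ coming from Lemma~\ref{lem:avoid-H}, so that from \emph{any} starting point in $[-t,t]^d$ the walk hits $\calH_*$ before $\tau\wedge\sfT_*$ except on an event of probability $\exp\{-t^{c(\epsilon)}\}$. The event $\{N_t\le t^{\frac{\alpha\epsilon}{16d}}\}$ forces $\sfR_{\lfloor t^{\alpha\epsilon/16d}\rfloor+1}\ge t$, i.e. the sum of the first $m:=\lfloor t^{\alpha\epsilon/16d}\rfloor+1$ interarrival times $\sfR_{k}-\sfR_{k-1}$ (with $\sfR_0=0$) is at least $t$; since $m$ is subpolynomial in the sense that $m\tau\ll t$ (indeed $m\tau\le t^{1-\frac{\alpha\epsilon}{8d}+\frac{\alpha\epsilon}{16d}}=t^{1-\frac{\alpha\epsilon}{16d}}=o(t)$), at least one interarrival time must exceed $\tau$.

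The key step is then a union bound over $k$ combined with the strong Markov property. I would write, using $\sfR_k-\sfR_{k-1}=(\sfR_1\circ\theta_{\sfR_{k-1}})$ and the fact that $S_{\sfR_{k-1}}\in\calH_*\subset[-t,t]^d$,
\begin{equation}
P_0\left(N_t\le t^{\frac{\alpha\epsilon}{16d}}\right)
\le P_0(\sfT_*\le t)+\sum_{k=1}^{m} P_0\left(\sfR_k-\sfR_{k-1}>\tau,\ \sfT_*>t\right).
\end{equation}
On $\{\sfT_*>t\}$ the walk stays in $[-t,t]^d$ up to time $t\ge \sfR_{k-1}$, so by the strong Markov property at $\sfR_{k-1}$ each summand is bounded by $\max_{x\in[-t,t]^d}P_x(\sfH_{\calH_*}\wedge\sfT_*>\tau)\le\exp\{-t^{c(\epsilon)}\}$ from Lemma~\ref{lem:avoid-H}. (One should be slightly careful with the very first interarrival time $\sfR_1$, which is literally $\sfH_{\calH_*}$ started from $0$, and already covered by the same estimate.) The term $P_0(\sfT_*\le t)$ is the probability that the simple random walk exits $[-t,t]^d$ by time $t$, which decays stretched-exponentially by the reflection principle and Lemma~\ref{lem:RWHK} since $t^{1/2}\ll t$. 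Summing the $m=t^{o(1)}$ terms leaves the bound $\exp\{-t^{c(\epsilon)}\}$ intact, possibly after shrinking $c(\epsilon)$.

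The main obstacle is bookkeeping rather than conceptual: one must make sure the starting points of all the interarrival intervals genuinely lie in $[-t,t]^d$ so that Lemma~\ref{lem:avoid-H} applies uniformly, and that the event $\{\sfT_*>t\}$ is correctly inserted so that the strong Markov decomposition is valid (the departure points $S_{\sfD_k}$ are neighbors of points of $\calH_*$ and hence still in the box for large $t$, using Lemma~\ref{lem:cluster} if one wants to argue that returns and departures do not drift to the boundary). A minor point is checking the arithmetic $m\tau=o(t)$ so that ``$N_t$ small'' really does force one long interarrival time; this is where the specific exponents $\frac{\alpha\epsilon}{8d}$ and $\frac{\alpha\epsilon}{16d}$ are chosen, and it goes through with room to spare.
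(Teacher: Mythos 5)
Your decomposition into interarrival times and the pigeonhole arithmetic ($m\tau=o(t)$ so one interarrival time must exceed $\tau$) is essentially the paper's route, and the exit-time and union-bound bookkeeping is fine. But there is a genuine gap in the key step: you bound $P_0(\sfR_k-\sfR_{k-1}>\tau,\ \sfT_*>t)$ by $\max_{x\in[-t,t]^d}P_x(\sfH_{\calH_*}\wedge\sfT_*>\tau)$ via the strong Markov property at time $\sfR_{k-1}$. This does not work, because $S_{\sfR_{k-1}}\in\calH_*$, so from that starting point $\sfH_{\calH_*}=0$; the interarrival time $\sfR_k-\sfR_{k-1}$ is \emph{not} a hitting time of $\calH_*$. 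It decomposes as $(\sfD_{k-1}-\sfR_{k-1})+(\sfR_k-\sfD_{k-1})$, and Lemma~\ref{lem:avoid-H} (applied after the strong Markov property at $\sfD_{k-1}$, not $\sfR_{k-1}$) only controls the second piece, the time to re-enter $\calH_*$ after a departure. The first piece — the time the walk spends \emph{inside} $\calH_*$ before departing — is not touched by your argument, and a priori it could be long. (Also, the identity $\sfR_k-\sfR_{k-1}=\sfR_1\circ\theta_{\sfR_{k-1}}$ is not right with the paper's conventions; for $k\ge2$ it is $\sfR_2\circ\theta_{\sfR_{k-1}}$, which makes the inclusion of the sojourn in $\calH_*$ explicit.)

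This is exactly where Lemma~\ref{lem:cluster} is needed and where the paper uses it: because every connected component of $\calH_*\cap[-t,t]^d$ has at most $d+2$ points, the departure time $\sfD_1$ started from any $x\in\calH_*$ has an exponential tail, so $P_x(\sfD_1>t^{1-\alpha\epsilon/8d})$ is (much more than) stretched-exponentially small. You mention Lemma~\ref{lem:cluster} only to note that departure points stay in the box, which is a minor point; the essential use is the exponential bound on the sojourn time in $\calH_*$. To repair the proof, split each long interarrival interval into its departure part and its return part (say, require one of $\sfD_{k-1}-\sfR_{k-1}$ or $\sfR_k-\sfD_{k-1}$ to exceed $t^{1-\alpha\epsilon/8d}$ when $\sfR_k-\sfR_{k-1}>t^{1-\alpha\epsilon/16d}$), bound the return part by Lemma~\ref{lem:avoid-H} via the strong Markov property at $\sfD_{k-1}$, and bound the departure part by Lemma~\ref{lem:cluster} via the strong Markov property at $\sfR_{k-1}$. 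With that addition the argument closes.
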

\begin{proof}
By the reflection principle and Lemma~\ref{lem:RWHK}, it follows that
\begin{equation}
 P_0\left(\sfT_{{*}} \le t\right)\le \exp\left\{-ct\right\}
\end{equation}
for some $c>0$. Thus it suffices to show that 
\begin{equation}
 P_0\left(N_t\le t^{{\alpha\epsilon \over 16d}}, \sfT_{{*}} >t\right)
\le \exp\left\{-t^{c(\epsilon)}\right\}. 
\end{equation}
Observe that if $N_t\le t^{{\alpha\epsilon \over 16d}}$ and $\sfT_{{*}}>t$, then there exists $k\le t^{{\alpha\epsilon \over 16d}}+1$ such that 
\begin{equation}
 \sfR_k\wedge \sfT_{{*}}-\sfR_{k-1}\wedge \sfT_{{*}}>t^{1-{\alpha\epsilon \over 16d}},
\end{equation}
and this event further implies that
\begin{equation}
 \textrm{either }\sfR_k\wedge \sfT_{{*}}-\sfD_{k-1}\wedge \sfT_{{*}}
\ge t^{1-{\alpha\epsilon \over 8d}} 
\textrm{ or } 
\sfD_{k-1}\wedge \sfT_{{*}}-\sfR_{k-1}\wedge \sfT_{{*}}\ge t^{1-{\alpha\epsilon \over 8d}}. 
\end{equation}
Then by the union bound and the strong Markov property, we obtain
\begin{equation}
\begin{split}
&P_0\left(N_t\le t^{{\alpha\epsilon \over 16d}}, \sfT_{{*}} >t\right)\\
&\quad\le \sum_{k\le t^{{\alpha\epsilon / 16d}}+1}
 P_0\left(\sfD_{k-1}\wedge \sfT_{{*}}-\sfR_{k-1}\wedge \sfT_{{*}}>t^{1-{\alpha\epsilon \over 8d}}\right)\\
&\quad\qquad+\sum_{k\le t^{{\alpha\epsilon / 8d}}+1}
 P_0\left( \sfR_k\wedge \sfT_{{*}}-\sfD_{k-1}\wedge \sfT_{{*}}>t^{1-{\alpha\epsilon \over 8d}}\right)\\
&\quad\le \left(t^{{\alpha\epsilon \over 16d}}+1\right)
\left(
\max_{x\in\calH_{{*}}}P_x\left(\sfD_1>t^{1-{\alpha\epsilon \over 8d}}\right)
+\max_{x\in[-t, t]^d} 
P_x\left(\sfH_{\calH_{{*}}}\wedge \sfT_{{*}}>t^{1-{\alpha\epsilon \over 8d}}\right)\right).
\end{split}
\end{equation}
By Lemma~\ref{lem:cluster}, $D_1$ under $P_x$ for $x\in\calH_{{*}}$ has an exponential tail and hence the first probability in the last line decays exponentially in $t^{1-\alpha\epsilon/8d}$. The second probability is bounded by $\exp\{-t^{c(\epsilon)}\}$ by~\eqref{eq:avoid-H2}. 
\end{proof}
\begin{proof}[Proof of Proposition~\ref{local-time3}]
On the event $\{N_t\ge t^{\alpha\epsilon / 8d}\}$, we have 
\begin{equation}
 \ell_t(\calH_{{*}}) \ge \sum_{k\le t^{\alpha\epsilon/ 8d}-1}(\sfD_k-\sfR_k),
\end{equation} 
whose right-hand side is bounded from below by a sum of the independent exponential random variables with rate one. 
Therefore, it follows from Lemma~\ref{number-hit-H} that 
\begin{equation}
\begin{split}
P_0(\ell_t(\calH_{{*}}) \le 1)
&\le P_0\left(N_t < t^{\alpha\epsilon \over 8d}\right)
+P_0\left(\max_{k\le t^{\alpha\epsilon/ 8d}-1}(\sfD_k-\sfR_k)\le 1\right)\\
&\le \exp\set{-t^{c(\epsilon)}}+\exp\left\{-t^{\alpha\epsilon \over 8d}+1\right\}
\end{split}
\end{equation}
and we are done. 
\end{proof}
\section{Local central limit theorem or failure of thereof}
\label{sec:LCLT}
In this section, we prove that the local central limit theorem holds when $\alpha>\tfrac{d}2\vee 1$, and fails when $\alpha<\tfrac{d}{2}\vee 1$. For the latter, since the case $\alpha<1$ is covered by Theorem~\ref{thm:on-diag}, we focus on $d\ge 3$ and $1\le \alpha<\tfrac{d}{2}$. For simplicity, we set $\E[z(0)]=1$ so that $P^{\E[\omega]}_0=P_0$. 
\begin{proof}
[Proof of Proposition~\ref{rem:LCLT}]
Let us first fix $\alpha>\tfrac{d}{2}\vee 1$, $\epsilon>0$, $R>0$, and $(x_1,x_2)\in\Z^{1+d}$ with $|(x_1,x_2)|\le Rt^{1/2}$ and consider the transition probability
\begin{equation}
\begin{split}
P^\omega_0(X_t=(x_1,x_2))
&= E_0\left[p_{A(t)}(0,x_1) ; S_t=x_2\right]\\
&= E_0\left[p_{A(t)}(0,x_1) ; S_t=x_2, |t^{-1}A(t)-1|\le \epsilon\right]\\
&\quad+E_0\left[p_{A(t)}(0,x_1) ; S_t=x_2, |t^{-1}A(t)-1|> \epsilon\right].
\end{split}
\label{eq:lclt1}
\end{equation}
For the second term on the right-hand side, we use Theorem~\ref{LDP} and Proposition~\ref{prop:RWRSlower} to obtain
\begin{equation}
 P_0\left(S_t=x_2, |t^{-1}A(t)-1|> \epsilon \right)\le \exp\left\{-t^{c(\epsilon)}\right\},
\label{eq:LD_pinned}
\end{equation}
which is negligible for the purpose of the local central limit theorem. 
Next, for the first term on the right-hand side of~\eqref{eq:lclt1}, note first that 
\begin{equation}
 \left|\frac{p_{A(t)}(0,x_1)}{p_t(0,x_1)}-1\right| \le c\epsilon
\end{equation}
on the event $\{|t^{1}A(t)-1|\le \epsilon\}$ for all sufficiently large $t$. Moreover, by using~\eqref{eq:LD_pinned}, we have that
\begin{equation}
P_0\left(S_t=x_2, |t^{-1}A(t)-1|\le \epsilon\right) 
\sim P_0\left(S_t=x_2\right)
\end{equation}
as $t\to\infty$. Substituting these controls into~\eqref{eq:lclt1}, we arrive at
\begin{equation}
\left|\frac{P^\omega_0(X_t=(x_1,x_2))}{P_0\left((S^1_t,S^2_t)=(x_1,x_2)\right)}
-1\right|\le c\epsilon
\end{equation}
for all sufficiently large $t$. Since $\epsilon>0$ was arbitrary, we are done. 

Next, let $d \ge 3$ and $1\le \alpha<\tfrac{d}{2}$. Then for any $R>0$ and $\epsilon\in (0,1)$, there is an $x_2\in [-Rt^{1/2}, Rt^{1/2}]$ such that $z(x_2)\ge t^{\frac{d}{2\alpha}-\epsilon}$ almost surely for all large $t$. For this $x_2$, by time reversal, we have 
\begin{equation}
\begin{split}
P^\omega_0(X_t=(x_1,x_2))
&= E_0\left[p_{A(t)}(0,x_1) ; S_t=x_2\right]\\
&= E_{x_2}\left[p_{A(t)}(x_1,0) ; S_t=0, \sfD_1 < t^{-\epsilon}\right]\\
&\quad+E_{x_2}\left[p_{A(t)}(x_1,0) ; S_t=0, \sfD_1 \ge t^{-\epsilon}\right],
\end{split}
\label{eq:first-jump}
\end{equation}
where $\sfD_1$ is the first jump time of the random walk. 
By Proposition~\ref{prop:RWRSlower} and Lemma~\ref{lem:RWHK}, the first term on the right-hand side is bounded by 
\begin{equation}
\begin{split}
&E_{x_2}\left[p_{A(t)}(0,x_1) ; S_t=0, A(t) \ge t^{1-\epsilon}, \sfD_1 \le t^{-\epsilon}\right]+\exp\{-t^{c(\epsilon)}\}\\
&\quad \le ct^{-\frac{1-\epsilon}{2}}P_{x_2}\left(S_t=0, \sfD_1 \le t^{-\epsilon}\right)+\exp\{-t^{c(\epsilon)}\}.
\end{split}
\end{equation}
By using the strong Markov property at $\sfD_1$ and Lemma~\ref{lem:RWHK} again, we get
\begin{equation}
 P_{x_2}\left(S_t=0, \sfD_1 \le t^{-\epsilon}\right)\le ct^{-\epsilon-\frac{d}{2}}
\end{equation}
and hence the first term on the right-hand side of~\eqref{eq:first-jump} is bounded by $ct^{-\frac{1+d}{2}-\frac{\epsilon}{2}}$. 
For the second term in~\eqref{eq:first-jump}, note that $A(t)\ge t^{\frac{d}{2\alpha}-2\epsilon}$ on the event $\{\sfD_1 \ge t^{-\epsilon}\}$ since $z(S_0)=z(x_2)\ge t^{\frac{d}{2\alpha}-\epsilon}$. Then by using Lemma~\ref{lem:RWHK} twice, we have
\begin{equation}
\begin{split}
 E_{x_2}\left[p_{A(t)}(x_1,0) ; S_t=0, \sfD_1 \ge t^{-\epsilon}\right]
 &\le c t^{-\frac{d}{4\alpha}+\epsilon}P_{x_2}\left(S_t=0\right)\\
 &\le ct^{-\frac{d}{4\alpha}+\epsilon-\frac{d}{2}}.
\end{split}
\end{equation}
This is bounded by $t^{-\frac{1+d}{2}-\frac{\epsilon}{2}}$ for sufficiently small $\epsilon>0$ since $\alpha < \tfrac{d}{2}$. Therefore we conclude that
\begin{equation}
 P^\omega_0(X_t=(x_1,x_2))\le ct^{-\frac{1+d}{2}-\frac{\epsilon}{2}}.
\label{eq:LCLTfail}
\end{equation}
\end{proof}
\section{Power law decay regime of random conductance model}
\label{sec:RCM-power}
\begin{proof}[Proof of Theorem~\ref{thm:RCM-power}]
We consider $\delta>\frac{1}{2\alpha}\vee \frac12$ first. By~\eqref{HK-RWRS} and Lemma~\ref{lem:RWHK}, 
\begin{equation}
 \begin{split}
\label{eq:Gaussian}
 P^\omega_0(X_t=t^\delta\mathbf{e}_1)
 &=E_0[p_{A(t)}(0,t^\delta); S_t=0]\\
 &\asymp E_0\left[A(t)^{-1/2}\exp\left\{-\frac{t^{2\delta}}{2A(t)}\right\}; A(t) \ge t^{\delta}, S_t=0\right]\\
&\qquad +E_0\left[\exp\left\{-t^{\delta}\left(1\vee \log \frac{t^\delta}{A(t)}\right)\right\}; A(t)<t^\delta, S_t=0\right].
 \end{split}
\end{equation}
The second term on the right-hand side is stretched exponentially small and hence negligible. Let $\epsilon>0$ be a small constant such that $2\delta-\epsilon>\frac{1}{\alpha}\vee 1$. If $A(t)\le t^{2\delta-\epsilon}$, then the exponential factor in the first term decays stretched exponentially and hence we can drop this event. In this way, we arrive at the upper bound 
\begin{equation}
\begin{split}
 P^\omega_0(X_t=t^\delta\mathbf{e}_1)
 & \le c E_0\left[A(t)^{-1/2}; A(t)\ge t^{2\delta-\epsilon}, S_t=0\right]\\
 &= c\int_{t^{2\delta-\epsilon}}^\infty u^{-1/2}P_0(A(t)\in \text{d} u, S_t=0)\\
 &= ct^{-\delta+\frac{1}{2}\epsilon}P_0\left(A(t)\ge t^{2\delta-\epsilon}, S_t=0\right)\\
&\qquad +\frac{c}{2} \int_{t^{2\delta-\epsilon}}^\infty u^{-3/2}P_0(A(t)\ge u, S_t=0)\dd u,
\end{split}
\label{eq:IBP}
\end{equation}
where we have used the integration by parts in the last equality. We can also obtain a similar lower bound on $P^\omega_0(X_t=t^\delta\mathbf{e}_1)$ with $t^{2\delta-\epsilon}$ replaced by $t^{2\delta+\epsilon}$. Since the following argument is insensitive to this change, we will only estimate the right-hand side of~\eqref{eq:IBP}. The first term gives the desired asymptotics by~\eqref{eq:lbd_pinned}. As for the second term, the contribution from the region $u>t^{\frac{d}{2\alpha}-\epsilon}$ is seen to be negligible by Theorem~\ref{RWRS} (for $u>t^{\frac{d}{2\alpha}+\epsilon}$) and the upper bound in Theorem~\ref{power-law} and Remark~\ref{rem:coverage} (for $t^{\frac{d}{2\alpha}-\epsilon}<u\le t^{\frac{d}{2\alpha}+\epsilon}$). 
Hence the relevant region is $u\in [t^{2\delta-\epsilon},t^{\frac{d}{2\alpha}-\epsilon}]$: 
\begin{equation}
\int_{t^{2\delta-\epsilon}}^\infty u^{-3/2}P_0\left(A(t)\ge u, S_t=0\right)\dd u
\sim \int_{t^{2\delta-\epsilon}}^{t^{d/(2\alpha)-\epsilon}} u^{-3/2}P_0\left(A(t)\ge u, S_t=0\right)\dd u
\label{eq:relevant}
\end{equation}
as $t\to\infty$. We divide the above integral into those on intervals of the form $[t^{k\epsilon},t^{(k+1)\epsilon}]$ and apply Theorem~\ref{power-law} to obtain 
\begin{equation}
\begin{split}
&\int_{t^{2\delta-\epsilon}}^{t^{d/(2\alpha)-\epsilon}} u^{-3/2}P_0\left(A(t)\ge u, S_t=0\right)\dd u\\
&\quad \le \sum_{k=\lfloor 2\delta/\epsilon\rfloor-1}^{\lfloor d/(2\alpha\epsilon)\rfloor}t^{-\frac{3k\epsilon}{2}}P_0\left(A(t)\ge t^{k\epsilon}, S_t=0\right)(t^{(k+1)\epsilon}-t^{k\epsilon})\\
&\quad \le \sum_{k=\lfloor 2\delta/\epsilon\rfloor-1}^{\lfloor d/(2\alpha\epsilon)\rfloor}t^{-\frac{3k\epsilon}{2}} t^{-\alpha k\epsilon+1-\frac{d}{2}+o(1)} (t^{(k+1)\epsilon}-t^{k\epsilon})\\
&\quad \le t^{c\epsilon} \int_{t^{2\delta-\epsilon}}^{t^{d/(2\alpha)-\epsilon}} u^{-\frac32-\alpha} t^{1-\frac{d}{2}+o(1)} \dd u\\
&\quad \le t^{-\delta(1+2\alpha)+1-\frac{d}{2}+c'\epsilon}
\end{split}
\label{discretization}
\end{equation}
as $t \to \infty$. As $\epsilon>0$ is arbitrary, this yields the desired upper bound. We can get the lower bound in a similar way and complete the proof in the case $\delta>\frac{1}{2\alpha}\vee \frac{1}{2}$.

Next we consider $\delta\le \frac{1}{2\alpha}\vee \frac12=\frac{s(d,\alpha)}{2}$. In this case, the interval $[t^{2\delta},\infty)$ in~\eqref{eq:IBP} contains the typical scale $t^{s(d,\alpha)}$ of $A(t)$. For the upper bound, we may drop the exponential factors in~\eqref{eq:Gaussian}. Since the lower deviation event $\{A(t)< t^{s(d,\alpha)-\epsilon}\}$ is stretched exponentially unlikely by Proposition~\ref{prop:RWRSlower}, we can discard it to obtain
\begin{equation}
\begin{split}
P^\omega_0(X_t=t^\delta\mathbf{e}_1)
& \le (1+o(1)) E_0\left[A(t)^{-1/2}; A(t)\ge t^{s(d,\alpha)-\epsilon}, S_t=0\right]\\
& =t^{-\frac{s(d,\alpha)}{2}+\frac{1}{2}\epsilon}P_0\left(A(t)\ge t^{s(d,\alpha)-\epsilon}, S_t=0\right)\\
&\qquad +\frac12 \int_{t^{s(d,\alpha)-\epsilon}}^\infty u^{-3/2}P_0(A(t)\ge u, S_t=0)\dd u
\end{split}
\label{eq:2nd-1}
\end{equation}
as $t\to \infty$ as before. The first term gives the desired asymptotics since $P_0(S_t=0)\asymp t^{-d/2}$ and $P_0(A(t)<t^{s(d,\alpha)-\epsilon})$ decays stretched exponentially. As for the second term, note first that 
\begin{equation}
\begin{split}
& \int_{t^{s(d,\alpha)-\epsilon}}^{t^{s(d,\alpha)+\epsilon}} u^{-\frac32}P_0(A(t)\ge u, S_t=0)\dd u \\
&\quad \le t^{-\frac{s(d,\alpha)}{2}+\frac52\epsilon}P_0\left(A(t)\ge t^{s(d,\alpha)-\epsilon}, S_t=0\right)
\end{split}
\end{equation}
for sufficiently large $t$ and the right-hand side is almost the same order as the first term. Thus it remains to show that the integral over $[t^{s(d,\alpha)+\epsilon},\infty)$ is negligible. Using Theorem~\ref{power-law} and the same argument as in~\eqref{discretization}, we can show that
\begin{equation}
\begin{split}
&\int_{t^{s(d,\alpha)+\epsilon}}^\infty u^{-\frac32}P_0(A(t)\ge u, S_t=0)\dd u\\
&\quad\le t^{c\epsilon}\int_{t^{s(d,\alpha)+\epsilon}}^\infty u^{-\frac32-\alpha}t^{1-\frac{d}{2}}\dd u\\
&\quad\le t^{-\frac{s(d,\alpha)}{2}(1+2\alpha)+1-\frac{d}{2}+c'\epsilon}
\end{split}
\label{eq:2nd-2}
\end{equation}
for sufficiently large $t$. The last line is of small order compared with the desired asymptotics and the upper bound follows. 
To show the lower bound, note that~\eqref{eq:lbd_pinned} implies
\begin{equation}
 P_0\left(A(t) \in \left[t^{s(d,\alpha)},t^{s(d,\alpha)+\epsilon}\right], S_t=0\right) 
\ge t^{-c\epsilon}P_0\left(S_t=0\right)
\end{equation}
for all sufficiently large $t$. Thus we can replace the event $\{S_t=0\}$ in~\eqref{eq:Gaussian} by the one on the above left-hand side to obtain
\begin{equation}
\begin{split}
P^\omega_0(X_t=t^{\delta}\mathbf{e}_1)
& \ge E_0\left[A(t)^{-1/2}; A(t) \in \left[t^{s(d,\alpha)},t^{s(d,\alpha)+\epsilon}\right],S_t=0\right]\\
& \ge c t^{-\frac{s(d,\alpha)}{2}-c'\epsilon}P_0\left(S_t=0\right),
\end{split}
\end{equation}
which yields the desired lower bound.
\end{proof}

\section{Asymptotics of the Green function}
\label{sec:Green}
In this section, we prove Theorem~\ref{thm:Green}. Essentially, it is a consequence of the heat kernel estimates that have been proved so far. However, since we only know those estimates in the long time asymptotics, we need an a priori bound to deal with the first small time interval in the integral~\eqref{eq:def_Green}.

\begin{proof}[Proof of Theorem~\ref{thm:Green}]
Let $\epsilon>0$ and define 
\begin{equation}
c(\epsilon)=
\begin{cases}
\epsilon, &d=1,\\
\frac{4\alpha}{d}-2\epsilon, &d\ge 2. 
\end{cases}
\end{equation}
We start by proving that we may remove the first $[0,n^{c(\epsilon)}]$ interval from~\eqref{eq:def_Green} to show
\begin{equation}
g^\omega(0,n\mathbf{e}_1)
\sim \int_{n^{c(\epsilon)}}^\infty P^\omega_0(X_t=n\mathbf{e}_1) \dd t
\label{eq:int}
\end{equation}
as $n\to\infty$. To this end, we recall the representation $X_t=(S^1_{A^2(t)},S^2_t)$ and argue as
\begin{equation}
\begin{split}
P^\omega_0(X_t=n\mathbf{e}_1)&\le P_0(A(t)\ge n^{2-\epsilon})+\sup_{s\le n^{2-\epsilon}}p_s(0,n)\\
&\le P_0\left(A(n^{c(\epsilon)})\ge n^{2-\epsilon}\right)+\sup_{s\le n^{2-\epsilon}}p_s(0,n)
\end{split}
\end{equation}
for any $t\le n^{c(\epsilon)}$. Now by Theorem~\ref{RWRS}, the first term on the right-hand side decays stretched exponentially. The second term also decays stretched exponentially in $n$ by Lemma~\ref{lem:RWHK}. 

The cases $d\le 2$ and $d\ge 3$ with $\alpha\ge\frac{d}{2}$ are fairly simple. For any $\epsilon>0$, the event $\{A(t)\ge t^{s(d,\alpha)+\epsilon}\}\cup\{A(t)\le t^{s(d,\alpha)-\epsilon}\}$ has stretched exponentially small probability. Hence we can approximate the Green function as 
\begin{equation}
\begin{split}
g^\omega(0,n\mathbf{e}_1)&\sim\int_{n^{c(\epsilon)}}^\infty {(4\pi t)}^{-\frac{d}{2}}p_{t^{s(d,\alpha)+o(1)}}(0,n)\dd t\\
&\sim \int_{\epsilon n^{2/s(d,\alpha)}}^\infty {(4\pi t)}^{-\frac{d}{2}}p_{t^{s(d,\alpha)+o(1)}}(0,n)\dd t
\end{split}
\end{equation}
as $n\to\infty$ and the desired bound readily follows from this. In the case $d\ge 2$ and $\alpha>\frac{d}{2}$, the probability of $\{A(t)\ge t(\E[z(0)]+\epsilon)\}\cup\{A(t)\le t(\E[z(0)]-\epsilon)\}$ decays stretched exponentially and the result can be refined as stated. 

The case $d\ge 3$ with $\alpha<\frac{d}{2}$ is a bit more involved since there is an unusual power law regime in Theorem~\ref{thm:RCM-power}. 
We divide the integral on $[n^{c(\epsilon)},\infty)$ into two parts and use Theorem~\ref{thm:RCM-power} with a discretization argument similar to~\eqref{discretization} to find
\begin{equation}
\label{eq:Green1}
 \begin{split}
\int_{n^{c(\epsilon)}}^{n^{2(1\wedge \alpha)-\epsilon}} P^\omega_0(X_t=n\mathbf{e}_1) \dd t
&=\int_{n^{4\alpha/d-2\epsilon}}^{n^{2(1\wedge \alpha)-\epsilon}} n^{-1-2\alpha}t^{1-\frac{d}{2}+o(1)} \dd t\\
&=
\begin{cases}
n^{-1-2\alpha+(4-d)(1\wedge \alpha)+o(1)}, &d\in \{3, 4\}, \\
n^{-1-\frac{4\alpha}{d}(d-2)+o(1)}, &d\ge 5,\\
\end{cases}
\end{split}
\end{equation}
\begin{equation}
\label{eq:Green3}
 \begin{split}
\int_{n^{2(1\wedge \alpha)-\epsilon}}^\infty P^\omega_0(X_t=n\mathbf{e}_1) \dd t
&=\int_{n^{2(1\wedge \alpha)-\epsilon}}^\infty t^{-\frac{1}{2}(\frac{1}{\alpha}\vee 1)-\frac{d}{2}+o(1)} \dd t\\
&=n^{-1-(1\wedge \alpha)(d-2)+o(1)}
 \end{split}
\end{equation}
as $n\to\infty$ followed by $\epsilon\to 0$. Note that the asymptotics in~\eqref{eq:Green1} are determined by the upper limit of the integral when $d=3\text{ or }4$, and by the lower limit when $d\ge 5$. When $d \ge 5$ and $\alpha<\frac{d}{4}$, the leading term is~\eqref{eq:Green1}. Otherwise, it is~\eqref{eq:Green3}. Combining these estimates, we obtain the desired asymptotics.
\end{proof}
\begin{remark}
\label{rem:Green-off}
\begin{enumerate}[leftmargin=20pt]
 \item 
If we change the endpoint $n\mathbf{e}_1$ to $n(\mathbf{e}_1+\mathbf{e})$ with a non-zero $\mathbf{e}\in\R^{1+d}$ perpendicular to $\mathbf{e}_1$, then we have an upper bound
\begin{equation}
\begin{split}
g^\omega(0,n(\mathbf{e}_1+\mathbf{e})) &\le n^{-s(d,\alpha)-d+2+o(1)}\\
& = \bar{d}(0,n(\mathbf{e}_1+\mathbf{e}))^{2-d_{\text{s}}+o(1)}
\end{split}
\end{equation}
as $n\to \infty$, where the exponent in the first line is larger than those in Theorem~\ref{thm:Green} when $\alpha<1\vee \frac{d}{4}$. Therefore in this case, the decay rate of the Green function is singular in $\mathbf{e}_1$-direction and the elliptic Harnack inequality fails. Indeed, it was~\eqref{eq:Green1} that caused the anomalous behavior but when $\mathbf{e}\neq 0$, we have
\begin{equation}
\int_{n^{c(\epsilon)}}^{n^{2-\epsilon}} P^\omega_0(X_t=n(\mathbf{e}_1+\mathbf{e})) \dd t
\le \int_{n^{4\alpha/d-2\epsilon}}^{n^{2-\epsilon}} P_0(S^2(t)=n\mathbf{e}) \dd t, 
\end{equation}
which decays stretched exponentially in $n$ by Lemma~\ref{lem:RWHK}. For the other part of the integral, recall that $p_{A(t)}(0,n) \le cA(t)^{-\frac12}$ by Lemma~\ref{lem:RWHK} and that we may assume $A(t) \ge t^{s(d,\alpha)-\epsilon}$ by Proposition~\ref{prop:RWRSlower}. 
Then we have 
\begin{equation}
\begin{split}
& \int_{n^{2-\epsilon}}^\infty P^\omega_0(X_t=n(\mathbf{e}_1+\mathbf{e})) \dd t\\
&\quad \sim \int_{n^{2-\epsilon}}^\infty E_0\left[p_{A(t)}(0,n); A(t) \ge t^{s(d,\alpha)-\epsilon}, S^2(t)=n\mathbf{e}\right] \dd t\\
&\quad \le c \int_{n^{2-\epsilon}}^\infty t^{-\frac12(s(d,\alpha)-\epsilon)} P_0\left(S^2(t)=n\mathbf{e}\right) \dd t\\
&\quad \le n^{-s(d,\alpha)-d+2+c\epsilon}
\end{split}
\label{eq:Green-off-upper}
\end{equation}
for any $\epsilon>0$ as $n\to\infty$. 
 \item Suppose $d\ge 3$ and $1\le \alpha<\frac{d}{2}$. If we fix $\epsilon>0$ sufficiently small, then just as in~\eqref{eq:LCLTfail}, we can find a sequence $\{x_2(n)\}_{n\in\N}\subset \{0\}\times \Z^d$ such that $n\le |x_2(n)|\le 2 n$ and 
\begin{equation}
 P_0^\omega(X_t=n\mathbf{e}_1+x_2(n)) \le n^{-1-d-2\epsilon}
\text{ for all }t\in[n^{2-\epsilon},n^{2+\epsilon}].
\end{equation}
Using this bound in~\eqref{eq:Green-off-upper} for $t\in[n^{2-\epsilon},n^{2+\epsilon}]$ instead of Lemma~\ref{lem:RWHK}, we obtain
\begin{equation}
 g^\omega(0,n\mathbf{e}_1+x_2(n)) \le c n^{1-d-\epsilon}.
\end{equation}
Since this decays faster than $g^\omega(0,n\mathbf{e}_1)$, it follows that the elliptic Harnack inequality fails in this case. 
 \item For the constant speed random walk $Y_t = (S^1_{A^2(B^{-1}(t))}, S^2_{B^{-1}(t)})\in \Z^{d_1+d_2}$ in~\eqref{eq:CSRW} and any $x_1\in \R^{d_1}\setminus\{0\}$, we have
\begin{equation}
\begin{split}
\tilde{g}^\omega(0,n(x_1,0))
&=\int_0^\infty P_0^\omega(Y_t=n(x_1,0))\dd t\\
&= \int_0^\infty E_0\bigl[p_{A(B^{-1}(t))}(0,nx_1);
S_{B^{-1}(t)}=0\bigr]\dd t.
\end{split}
\label{eq:Green-CSRW}
\end{equation}
It is simple to check that $A(B^{-1}(t))\asymp t$, and hence $p_{A(B^{-1}(t))}(0,nx_1)$ satisfies the same upper and lower bounds as in Lemma~\ref{lem:RWHK}. If we further assume that the following limit exists
\begin{equation}
d_{\text{s}}^{\text{BTM}}=-2\lim_{t\to\infty}\frac{\log P_0(S_{B^{-1}(t)}=0)}{\log t},
\label{eq:d_s-BTM}
\end{equation}
then by substituting the bounds in Lemma~\ref{lem:RWHK} and~\eqref{eq:d_s-BTM} into~\eqref{eq:Green-CSRW}, we can deduce that 
\begin{equation}
\tilde{g}^\omega(0,n(x_1,0))
=n^{2-d_1-d_{\text{s}}^{\text{BTM}}+o(1)}
\end{equation}
as $n\to \infty$. Since $\tilde{g}^\omega(x,y)=g^\omega(x,y)\omega(y)$ obeys the same asymptotics as in~\eqref{eq:Green-MD} on the other hand, it follows that
\begin{equation}
 d_{\text{s}}^{\text{BTM}}=
\begin{cases}
1\wedge \frac{2}{\alpha+1}, & d_2=1, \text{ and $\alpha<1$ if $d_1=1$},\\[5pt]
2+\bigl(1\wedge \alpha \wedge \frac{4\alpha}{d_2}\bigr)(d_2-2), & d_2\ge 2.
\end{cases}
\end{equation}
By using Lemma~\ref{lem:RWHK} again, we have 
\begin{equation}
\begin{split}
P_0^\omega(Y_t=n(x_1,0))
&= E_0\bigl[p_{A(B^{-1}(t))}(0,nx_1);S_{B^{-1}(t)}=0\bigr]\\
&= t^{-\frac{d_1}{2}-\frac{d_{\text{s}}^{\text{BTM}}}{2}+o(1)}
\end{split}
\end{equation}
and this yields the value of spectral dimension stated in~\eqref{eq:d_s-CSRW} (under the assumption that the limit~\eqref{eq:d_s-BTM} exists).
\end{enumerate}
\end{remark}
\section*{Acknowledgments}
The authors are grateful to Amir Dembo for suggesting to study the on-diagonal heat kernel behavior, Amine Asselah and Fabienne Castell for a discussion on the rate function in~\cite{AC03b}, and Omar Boukhadra for useful comments on an early version of this paper. The second author was partially supported by JSPS KAKENHI Grant Number 16K05200 and ISHIZUE 2019 of Kyoto University Research Development Program. 

\newcommand{\noop}[1]{}

\end{document}